\def\R{\mathbb{R}}                            
\def\P{\mathbb{P}}
\def\E{\mathbb{E}}
\def\calL{\mathcal{L}}
\def\br{\textbf{r}}
\newcommand{\rbracket}[1]{\left(#1\right)}
\newcommand{\sbracket}[1]{\left[#1\right]}
\newcommand{\norm}[1]{\left\|#1\right\|}
\newcommand{\normv}[1]{\left| #1\right|}
\newcommand{\innerp}[1]{\langle{#1}\rangle}
\newcommand{\floor}[1]{\lfloor{#1}\rfloor}
\def\btheta{\boldsymbol{\theta}}
\def\bPhi{\boldsymbol{\Phi}}
\def\boldeta{\boldsymbol{\eta}}
\def\gphi{g}
\newtheorem{theorem}{Theorem}
\newtheorem{assumption}[theorem]{Assumption}
\newtheorem{definition}[theorem]{Definition}
\newtheorem{lemma}[theorem]{Lemma}
\newtheorem{proposition}[theorem]{Proposition}
\newtheorem{remark}[theorem]{Remark}
\newenvironment{proof}[1][Proof]{\noindent\textbf{#1.} }{\ \rule{0.5em}{0.5em}}
\def\tcb{\textcolor{blue}}
\renewcommand{\Delta}{\triangle}
\DeclareMathOperator*{\argmin}{arg\,min}
\def\calC{\mathcal{C}}
\def\calE{\mathcal{E}}
\def\calK{\mathcal{K}}
\def\calX{\mathcal{X}}
\def\calH{\mathcal{H}}
\def\b1{\mathbf{1}}
\def\LGbar{ {\mathcal{L}_{\overline{G}}}  }
\def\Lbar{ {\bar{\mathcal{L}}}  }
\def\supp{\mathrm{supp}}
\def\R{\mathbb{R}}                            
\def\P{\mathbb{P}}
\def\bfA{\mathbf{A}}
\def\bfb{\mathbf{b}}
\def\bfr{\mathbf{r}}
\def\KL{\textup{KL}}
\definecolor{darkblue}{rgb}{0,0,0.7}
\definecolor{darkgreen}{rgb}{0.01,0.75,0.24}
\let\underbrace\LaTeXunderbrace
\numberwithin{theorem}{section}
\numberwithin{equation}{section}
\title{Optimal minimax rate of learning nonlocal interaction kernels}
\author[1]{Xiong Wang} 
\author[2]{Inbar Seroussi}
\author[1]{Fei Lu \footnote{Emails:\, xiongwang@jhu.edu,\, inbarser@gmail.com,\, feilu@math.jhu.edu}}
\affil[1]{Department of Mathematics, Johns Hopkins University, Baltimore, USA. }
\affil[2]{Department of Applied Mathematics, Tel Aviv University, Tel Aviv, Israel. }
\date{}
\begin{document}

\maketitle \vspace{-8mm}

\begin{abstract}
Nonparametric estimation of nonlocal interaction kernels is crucial in various applications involving interacting particle systems. The inference challenge, situated at the nexus of statistical learning and inverse problems, arises from the nonlocal dependency. A central question is whether the optimal minimax rate of convergence for this problem aligns with the rate of $M^{-\frac{2\beta}{2\beta+1}}$ in classical nonparametric regression, where $M$ is the sample size and $\beta$ represents the regularity index of the radial kernel. Our study confirms this alignment for systems with a finite number of particles.

We introduce a tamed least squares estimator (tLSE) that achieves the optimal convergence rate when $\beta\geq 1/4$ for a broad class of exchangeable distributions by leveraging random matrix theory and Sobolev embedding. The upper minimax rate relies on fourth-moment bounds for normal vectors and nonasymptotic bounds for the left tail probability of the smallest eigenvalue of the normal matrix. The lower minimax rate is derived using the Fano-Tsybakov hypothesis testing method. Our tLSE method offers a straightforward approach for establishing the optimal minimax rate for models with either local or nonlocal dependency. 
\end{abstract}

\noindent\textbf{Keywords: }{Nonparametric regression; interacting particle systems; optimal minimax rate; tamed least squares estimator; random matrices}

\noindent\textbf{Mathematics Subject Classification: } {Primary 62G08; secondary 62G20, 60B20}
%

\tableofcontents

\section{Introduction}
Consider the nonparametric regression of the radial \emph{interaction kernel} $\phi: \R^+\to\R$ in the model 
\begin{align}\label{eq:model}
Y=R_\phi[X]+ \eta 
\end{align}
from data consisting of samples $\{ (X^m,Y^m)\}_{m=1}^M$ of the joint distribution of $(X,Y)$. Here, $Y$ and $X$ are ${\R}^{N\times d}$-valued random variables with $N\geq 3$, denoted by $Y=(Y_1,\ldots, Y_N)^\top$ and $X=(X_1,\ldots, X_N)^\top$. The operator $R_\phi[X]=(R_\phi[X]_1,\cdots,R_\phi[X]_N)^\top$ represents the interaction between particles through the kernel $\phi$, its entries are defined by
\begin{equation}\label{eq:opt_model_R}
R_\phi[X]_i = \frac{1}{N-1} \sum\nolimits_{j\neq i} \phi(\|X_i-X_j\|_{\R^d}) \frac{X_i-X_j}{\|X_i-X_j\|_{\R^d}}\in\R^d , \quad i= 1,\ldots\, N, 
\end{equation} 
where we write $\sum_{j\neq i}:=\sum_{j=1, j\neq i}^N$. 
The noise $\eta$ is independent of $X$ and not necessarily Gaussian.

Nonparametric regression is particularly suitable for estimating the kernel $\phi$, thanks to the linear dependence of $R_\phi$ on $\phi$. The estimator, which is solved by least squares, is the minimizer of an empirical mean-square loss function 
\begin{align}\label{eq:lossFn}
	 \calE_{M}(\phi) = \frac{1}{M} \sum\nolimits_{m=1}^M \frac{1}{N} \|Y^m-R_{\phi}[X^m]\|_{\R^{Nd}}^2\, 
\end{align}
over a hypothesis space that is adaptively chosen to avoid underfitting and overfitting. 

The above nonparametric regression problem arises in the inference for systems of interacting particles or agents. Such systems are prevalent in collective dynamics in various fields, including flocking \cite{CS07,AH10,cattiaux2018_StochasticCucker}, opinion dynamics \cite{MT14}, kinetic granular media \cite{carrillo2003_KineticEquilibration,cattiaux2007_ProbabilisticApproach}, to name just a few. Driven by the applications, the past decade has seen a burst of efforts in inferring the system from data, including parametric \cite{della2023lan,messenger2022learning,liu2020_ParameterEstimation}, semi-parametric \cite{belomestny2023semiparametric}, and nonparametric \cite{della2022nonparametric,yao2022mean,LZTM19pnas,LMT21_JMLR} approaches. Given often limited prior knowledge about the kernel in applications, a nonparametric approach is desirable. In particular, the studies \cite{LMT21_JMLR,LMT22,LZTM19pnas} consider nonparametric inference of radial interaction kernels for first-order stochastic differential equations in the form  
 \begin{align}\label{eq:model_SDEs}
     \mathrm{d} X(t)=R_\phi[X(t)] \mathrm{d}t+\sigma \mathrm{d}B(t)\,, 
     \end{align}
where $X(t)=(X_1(t),\cdots,X_N(t))$ represents the position of particles, $R_{\phi}$ is same as in \eqref{eq:opt_model_R} and $B(t)$ is a standard Brownian motion in $\R^{Nd}$ with $\sigma\geq 0$ representing the strength of the random noise.  The least squares estimator is demonstrated to exhibit a convergence rate of $\big(\frac{M}{\log M}\big)^{-\frac{2\beta}{2\beta+1}}$ for suitable large $\beta$, where $M$ is the number of independent trajectories and $\beta$ represents the regularity index of the true kernel. However, the optimal minimax rate, namely the best convergence rate in the worst case, remains open. 

This study aims to answer the question of the optimal minimax rate. We consider the simplified but generic statistical model \eqref{eq:model}, which excludes numerical errors arising from the discretization of differential equations and eliminates dependence among components in the trajectory data.

\subsection{Main results}
This study establishes that the rate of $M^{-\frac{2\beta}{2\beta+1}}$ is the optimal minimax convergence rate under a coercivity condition that ensures the well-posedness of the inverse problem in the large sample limit. Informally, we establish the following minimax rate for $\beta\geq 1/4$:  
\begin{align*}
    \inf_{\widehat \phi_M} \sup_{\phi_*\in \calH^\beta} \E\Big[\|\widehat \phi_M-\phi_*\|_{L^2_\rho}^2\Big] \approx M^{-\frac{2\beta}{2\beta+1}}\,, \quad \text{ as }M\to \infty\,, 
\end{align*}
where the infimum is among all estimators $\widehat \phi_M$ inferred from data $\{(X^m,Y^m)\}_{m=1}^M$, and  $L^2_\rho$ is the space of square-integrable functions under the weight $\rho$, the probability measure of pairwise distances. Here, the hypothesis space $\calH^\beta$ can be a fractional Sobolev class $H^{\beta}(L)$ or H\"older class $\calC^{\beta}(L)$ (in Definitions \ref{def:wSobolev}--\ref{def:H\"older}). 
In particular, the minimax rate holds for $\beta\in [1/4, 1/2]$, so the hypothesis space can contain discontinuous functions, which are widely used in interacting particle systems (IPS) \cite{LMT21_JMLR,LMT22,LZTM19pnas,MT14}. This extends the range for $\beta$ from the cornerstone technique of covering argument in \cite{CuckerSmale02,Gyorfi06a}, which often requires $\beta> 1/2$ as a sufficient condition for estimating the entropy numbers or covering number of Sobolev classes. This range is also larger than the ranges $\beta\geq 2$ in \cite[Theorem 3.2]{LMT22} for stochastic IPS and $\beta\geq 1$ in \cite[Theorem 6]{LMT21_JMLR} for deterministic IPS, both based on the covering argument.  

A major innovation of our study is a new approach to proving the upper minimax rate. We introduce a \emph{tamed least square estimator} (tLSE) in Definition \ref{eq:tame-lse} and show that it achieves the optimal rate with a straightforward proof bypassing the traditional covering argument techniques employed in \cite{CuckerSmale02,LMT21_JMLR,LMT22,LZTM19pnas}. Our proof is based on sharp variance estimates leveraging Sobolev embedding and non-asymptotic estimates of the left tail probability of the smallest eigenvalue of the normal matrix. 
Notably, the tLSE-based method applies to the hypothesis spaces $\mathcal{H}^\beta$ in two settings: one with uniformly bounded basis functions, for which we establish minimax rates for all $\beta \geq 1/4$ in Theorem \ref{cor:L2rho_upper}, and another with non-uniformly bounded basis functions exhibiting mild growth of order $n^\delta$ with $\delta\geq 0$, for which we achieve minimax rates for all $\beta \geq \delta + 1/2$ in Theorem \ref{cor:L2rho_upper_case2}. We anticipate that the tLSE will serve as a valuable theoretical tool for establishing upper minimax rates in general nonparametric regression contexts.

To affirm that the upper minimax rate is optimal, we prove in Theorem \ref{thm:L2rho_lower} that the rate is also the lower minimax rate. We accomplish this by applying the Fano-Tsybakov method in \cite{tsybakov2008introduction}, which we generalize to include the weighted measure $\rho$. This involves careful construction of hypothesis functions for hypothesis testing in Section \ref{sec:lowerBd} and investigation into the relationship between fractional Sobolev classes and H\"older classes in Appendix \ref{sec:append_Sob}.

\subsection{Main difficulties and technical innovations}
The optimal minimax rate is well-established for classical nonparametric estimation (see, e.g., \cite{CuckerSmale02,Gyorfi06a,tsybakov2008introduction} and the references therein). In the classical setting, one estimates the function $\phi:\R\to \R$ in the model $Y=\phi(Z)+\eta$ from sample data $\{(Z^m,Y^m)\}_{m=1}^M$, where the data $Y$ depends \emph{locally} on a single value of $\phi$.  A critical fact in this setting is that the conditional expectation $\phi(z)= \E[Y | Z=z]$ is the unique minimizer of the mean-square loss function in the large sample limit, which is a well-posed inverse problem. Notable estimators achieving the minimax rate include the projection estimator for deterministic $Z$ (e.g., \cite{tsybakov2008introduction}) and the least squares estimator for random $Z$ using tools from the empirical process theory, which are based on covering arguments with the chaining technique (e.g., \cite{van2000asymptotic} and \cite[Chapters 11, 19]{Gyorfi06a}). 

However, \emph{nonlocal dependence} presents a new challenge in estimating the interaction kernel. The nonlocal dependence means that the operator $R_\phi[X]$ depends on the kernel $\phi$ non-locally through the weighted sum of multiple values of $\phi$, similar to a convolution. Thus, this intersection of statistical learning and deconvolution-type inverse problems raises significant hurdles in both well-posedness and the construction of estimators to achieve the minimax rate. 
To address these challenges, we show first that the inverse problem in the large sample limit is well-posed for a large class of distributions of $X$ satisfying Assumption \ref{assump:noise-X}. A key condition for well-posedness is the \emph{coercivity condition} studied in \cite{LiLu20,LLMTZ21,LMT22,LZTM19pnas}, and we examine it in Lemma \ref{lemma:coercive}. Due to this condition, a universal convergence rate for all distributions may not be feasible. Importantly, the coercivity condition also ensures that the nonlocal dependence does not affect the minimax rate, as discussed after Lemma \ref{lemma:projEst}.

Our major technical innovation lies in developing the tLSE with a straightforward proof for the upper minimax convergence rate. The tLSE is zero when the smallest eigenvalue of the normal matrix is below a threshold related to the coercivity constant, and it is the least squares estimator otherwise. That is, 
\begin{equation}\label{eq:tlse_intro}
\begin{aligned}
\widehat\phi_{n,M}^{\textup{tlse}} = \sum\nolimits_{k=1}^n \theta_k^{\textup{tlse}} \psi_k,\   \text{ where }
	(\theta_1^{\textup{tlse}},\ldots, \theta_n^{\textup{tlse}})^\top&= [\widebar{\bfA}_{n}^M]^{-1} \widebar{\bfb}_{n}^M \mathbf{1}_{\{ \lambda_{\min}(\widebar{\bfA}_{n}^M)> \frac{1}{4} c_{\bar\calL}\} } \,,  
\end{aligned}
\end{equation}
where $c_{\bar\calL}$ is the coercivity constant in Definition \ref{def:coercivity}. Here $\widebar{\bfA}_{n}^M$ and $\widebar{\bfb}_{n}^M$ are the normal matrix and normal vector for the regression over the hypothesis space $\calH_n= \mathrm{span}\{\psi_k\}_{k=1}^n$ with orthonormal basis functions $\{\psi_k\}_{k= 1}^{\infty}$ of $L^2_{\rho}$. Note that only in the set $\{ \lambda_{\min}(\widebar{\bfA}_{n}^M)\leq \frac{1}{4} c_{\bar\calL}\}$, the tLSE differs from the least squares estimator 
$
(\theta_1^{lse},\ldots, \theta_n^{lse})^\top= [\widebar{\bfA}_{n}^M]^{\dagger} \widebar{\bfb}_{n}^M
$, 
where $[\widebar{\bfA}_{n}^M]^{\dagger}$ denotes the Moore-Penrose inverse of $\widebar{\bfA}_{n}^M$. 
A crucial observation in our proof is that the optimal minimax rate is achieved if the probability of the set $\{ \lambda_{\min}(\widebar{\bfA}_{n}^M)\leq \frac{1}{4} c_{\bar\calL}\}$ does not affect the bias-variance tradeoff. This leads to the study of the left tail probability of $\lambda_{\min}(\widebar{\bfA}_{n}^M)$ with the dimension $n\approx M^{\frac{1}{2\beta+1}}$ chosen from the tradeoff, aiming for a non-asymptotic bound exponentially decaying in the sample size $M$.

We prove the following left tail probability using the matrix Chernoff inequality, 
\begin{align*}
	\P\left\{ \lambda_{\min}(\bar{\bfA}_{n}^{M}) \leq (1-\varepsilon) c_{\bar\calL} \right\}\leq 2n \exp\rbracket{-\frac{c \varepsilon M}{n}}\,, \quad \forall \varepsilon \in (0,1)\,,
\end{align*} 
where the constant $c>0$ is independent of $n,M$ and $\varepsilon$. 
 Importantly, this left tail probability bound is negligible in the bias-variance tradeoff as long as $n$ increases slower than $M$, a condition satisfied when $n\approx M^{\frac{1}{2\beta+1}}$ for all $\beta> 0$. The bound enables our tLSE to achieve the optimal upper minimax rate for all $\beta\geq 1/4$, which is required for the Sobolev embedding condition (Assumption {\rm\ref{assum:SoboEmb}}) to hold, as detailed in Lemma \ref{lemma:SobEmbed}.  We note that 
similar left tail probability bounds can be obtained using other matrix concentration inequalities, such as the widely-used matrix Bernstein inequality or the PAC-Bayes inequality introduced in \cite{Oliveira2016,Mourtada2022}. However, the Bernstein approach requires $\beta> 1/2$ to be negligible in the bias-variance tradeoff, and the PAC-Bayesian left tail probability demands additional constraints on the distribution of $X$ in order to achieve the upper minimax rate for all $\beta\geq 1/4$. More details can be found in Lemma \ref{thm:min-eigen_B} and Lemma \ref{thm:min-eigen}.

\subsection{Summary of the tLSE method and main insights}\label{sec:tsle_method} 
The tLSE method provides a new and efficient method for proving the upper minimax rate in nonparametric regression, applicable to models with either local or nonlocal dependency. As long as the coercivity condition holds, the proof is largely the same for both types of models. The process involves decomposing the $L^2_\rho$ error of the estimator $\widehat \phi_{n,M}^{\textup{tlse}}$ in \eqref{eq:tlse_intro} into bias and variance terms with proper upper bounds, then seeking a bias-variance tradeoff with optimal dimension $n=n_M\approx  M^{\frac{1}{2\beta +1}}$ to get the optimal rate. 
 \begin{enumerate}[leftmargin=5mm]
     \item[$\bullet$] \emph{Bias Term:}  The bias term is of order $n^{-2\beta}$ for functions in the fractional Sobolev class $H_{\rho}^{\beta}(L)$. 
     \item[$\bullet$] \emph{Variance Term:} For the nonlocal model, the variance term is further decomposed into a tamed variance term, a left tail probability bound, and a nonlocal bias term. In contrast, the local model does not have the nonlocal bias term. 
     \item[$\bullet$] \emph{Tamed Variance Term:} The tamed variance term is the variance of the well-conditioned parts of the tLSE, and it is of order $\frac{n}{M}$. 
    \item[$\bullet$] \emph{Left Tail Probability Bound:} The left tail probability controls the cutoff error of the tLSE, and it decays much faster than the bias term or the tamed variance term in the tradeoff.   
    \item[$\bullet$] \emph{Nonlocal Bias Term:} The nonlocal bias term arises due to the nonlocal interaction, which leads to a non-identity normal operator. It is of order $n^{-2\beta}$ under the coercivity condition.
\end{enumerate}
We summarize the dominating orders of these terms in Figure \ref{fig:diagram-tLSE}.

\usetikzlibrary{positioning}
\usetikzlibrary{arrows,shapes}
\usetikzlibrary{decorations.markings}

\tikzset{unode/.style = {
    circle, 
    draw=cyan!30!black, 
    thick,
    fill=cyan!80!black,
    inner sep=2.3pt,
    minimum size=2.3pt }}
    
\tikzset{roundnode/.style={
	circle, 
	draw=black!80, 
	fill=gray!10, 
	very thick, 
	minimum size=7mm}}

\tikzset{squarednode/.style={
	rectangle, 
	draw=black!80, 
	fill=gray!10, 
	very thick, 
	minimum size=5mm}}

\tikzset{squarednode2/.style={
	rectangle, 
	draw=orange!80, 
	fill=yellow!10, 
	very thick, 
	minimum size=5mm}}
	
\tikzset{squarednode3/.style={
	rectangle, 
	draw=black!80, 
	fill=cyan!20, 
	very thick, 
	minimum size=5mm}}
	
\tikzset{ellipsednode/.style={
	ellipse, 
	draw=black!80, 
	fill=cyan!20, 
	very thick, 
	minimum size=5mm}}
\tikzset{ellipsednode2/.style={
	ellipse, 
	draw=black!80, 
	fill=orange!20, 
	very thick, 
	minimum size=5mm}}

\begin{figure}[hbt]
\centering
\begin{tikzpicture}
\begin{scope}[xshift=2cm,yshift=2.5cm]
    \node[squarednode2, align=center] (NLE) at (0,0){\text{Total Error}\\ \textbf{Nonlocal  Model}};
    \node[align=center] (EP1) at (2,0){$=$};
    \node[squarednode2, align=center] (VN) at (3.8,0){\text{Bias:} $O(n^{-2\beta})$};
    \node[align=center] (PN1) at (5.7,0){$+$};
    \node[squarednode2, align=center] (BN) at (7,0){\text{Variance}};

    \node[squarednode, align=center] (LE) at (0,2.5){\text{Total error}\\ \textbf{Local Model}};
    \node[align=center] (EP1) at (2,2.5){$=$};
    \node[squarednode, align=center] (VL) at (3.8,2.5){\text{Bias:} $O(n^{-2\beta})$};
    \node[align=center] (PL1) at (5.7,2.5){$+$};
    \node[squarednode, align=center] (BL) at (7,2.5){\text{Variance}};

	\node[squarednode3, align=center] (LPV) at (10.91,2.5){\text{Tamed Variance}: \\ $O\left(n/M\right)$};
    \node[squarednode3, align=center] (LPL) at (11.3,1.25){\text{Left Tail Probability:}\\ $O\big(n\exp\left(-cM/n\right)\big)$};
    \node[squarednode3, align=center] (LPN) at (10.73,0){\text{Nonlocal Bias}: \\ $O(n^{-2\beta})$};

	\draw[very thick, orange, -stealth, shorten >=2pt] (BN.east) to node[auto] {} (LPN.west);
	\draw[very thick, orange, -stealth, shorten >=2pt] (BN.east) to node[auto] {} (LPL.west);
	\draw[very thick, orange, -stealth, shorten >=2pt] (BN.east) to node[auto] {} (LPV.west);
	
	\draw[very thick, gray, -stealth, shorten >=2pt] (BL.east) to node[auto] {} (LPL.west);
	\draw[very thick, gray, -stealth, shorten >=2pt] (BL.east) to node[auto] {} (LPV.west);
\end{scope}
\end{tikzpicture}
\caption{The bias-variance tradeoff in the tLSE approach for local and nonlocal models. The left tail probability and the nonlocal bias do not affect the bias-variance tradeoff.}
\label{fig:diagram-tLSE}	
\end{figure}

We note that many other methods achieve optimal minimax rates using more delicate tools and assumptions. The LSE using pseudo-inverse $[\widebar{\bfA}_{n}^M]^{\dag} \widebar{\bfb}_{n}^M$ has to deal with the negative moments of the normal matrices or, equivalently, the small ball probability of the smallest eigenvalue. Regularized LSEs in the form [$\widebar{\bfA}_{n}^M+\lambda \mathbf{B_n}]^{-1} \widebar{\bfb}_{n}^M$ with $\mathbf{B_n}$ being a positive definite matrix (see e.g., \cite[Chapter III.2]{CuckerSmale02} and \cite[Chapter 13.4]{Wainwright2019}) must be defined with a delicate regularization along with the bias-variance tradeoff. Another commonly used approach based on empirical process theory {\rm\cite{CuckerSmale02,LMT21_JMLR,LMT22}} bounds the variance term uniformly on the function spaces using the defect function and the covering techniques. On the contrary, the tLSE straightforwardly achieves the optimal rate without using the covering technique and thus establishes the optimal upper minimax rate even when $\beta\leq 1/2$.

This study has two main contributions:
\begin{enumerate}
\item \emph{Optimal minimax convergence rate for interaction kernel learning.} We establish the optimal minimax convergence rate $M^{-\frac{2\beta}{2\beta+1}}$ for learning the interaction kernel in Model \eqref{eq:model} with a wide range of distributions. It coincides with the minimax rate in the classical regression setting without local dependence. 
Moreover, this optimal rate applies to Sobolev classes with $\frac14\leq \beta\leq \frac12$. These classes encompass widely used discontinuous functions such as piecewise constant functions in opinion dynamics {\rm(see, e.g., \cite{MT14})}.

\item \emph{Introduction of the Tamed Least Square Estimator (tLSE).} The tLSE approach represents a new and efficient method for proving the minimax rate in nonparametric regression. A key insight is that the optimal minimax rate depends on whether the bias-variance tradeoff can remain unaffected by the left tail probability of the smallest eigenvalue of the normal matrix. This approach establishes a connection between the minimax rate, the left tail probability of the smallest eigenvalue of the random normal matrix, and fractional Sobolev embedding.
\end{enumerate}

The insights gained from this study pave the way for future research on the minimax rate for nonparametric regression regarding models with nonlocal dependence. The inverse problem in the large sample limit plays a fundamental role. We have focused on scenarios where the inverse problem is well-posed, finding that the optimal minimax rate $M^{-\frac{2\beta}{2\beta+1}}$ is consistent for regression with both local and nonlocal dependencies. In contrast, when the inverse problem is ill-posed, i.e., with a zero coercivity constant, the optimal rate, if it exists, is expected to be slower than $M^{-\frac{2\beta}{2\beta+1}}$ since the current rate bears a constant depending on the reciprocal of the coercivity constant. A subsequent study \cite{ZhangWangLu2025} further investigates the minimax rate in the absence of the coercivity condition, instead imposing polynomial or exponential decay conditions on the spectrum of the normal operator.

Additionally, our study has focused on the convergence in the sample size $M$ while keeping the number of particles $N$ finite. An intriguing direction for future research lies in examining the convergence rate as $N$ increases. Given that the inverse problem in the limit of $N=\infty$ becomes an ill-posed deconvolution, the convergence rate will likely depend on the spectral properties of the normal operator.

\subsection{Related work}
\paragraph*{Minimax rate for nonparametric regression} 
The study of the minimax rate in nonparametric regression is a well-established and extensively explored topic within inference and learning. Due to the vastness of the literature, we direct readers to \cite{Gyorfi06a,tsybakov2008introduction,CuckerSmale02,nickl2019_NonparametricStatistical}, among others, for comprehensive reviews. For lower minimax rates, this study utilizes the Fano-Tsybakov hypothesis testing method \cite{tsybakov2008introduction}. The Assouad method \cite{assouad1983deux,CaiYuan2012,yu97Assouad} and van Trees method \cite{gill1995applications} are viable alternatives.

For the upper minimax rate, notable estimators achieving the optimal rate without the logarithmic term include the projection estimator for deterministic input data (see, e.g., \cite{tsybakov2008introduction}), and the least squares estimator whose rate is proved by using the empirical process theory with covering arguments and chaining technique (see, e.g., \cite{van2000asymptotic} and \cite[Chapter 19]{Gyorfi06a}). Additionally, we note that the empirical process theory with a covering argument is widely used, and it applies to the interaction kernel estimation in \cite{LZTM19pnas,LMT22,LMT21_JMLR} and many others.  However, it leads to a sub-optimal rate with a logarithmic factor when using a fixed cover. The chaining technique may remove the logarithmic factor by constructing a sequence of covers, but the nonlocal dependence will further complicate the proof. The tamed least square estimator (tLSE) in the present paper stands out for its simplicity and broad applicability to nonparametric regression problems with either local or nonlocal dependencies.

\paragraph*{Inference for systems of interacting particles} A large amount of literature has been devoted to the inference for systems of interacting particles, and we can only sample a few here. Parametric inference has been studied in  \cite{amorino2023parameter,chen2021_MaximumLikelihooda,della2023lan,kasonga1990_MaximumLikelihooda,liu2020_ParameterEstimation,sharrock2021_ParameterEstimation} for the drift term and in \cite{huang2019learning} for the diffusion term. Nonparametric inference on estimating the drift $R_\phi$, but not the kernel $\phi$, has been studied in \cite{della2022nonparametric,yao2022mean}. The semi-parametric inference in \cite{belomestny2023semiparametric} estimates the interaction kernel. All these studies consider the case when $N\to \infty$ from a single long trajectory of the system. Inference of the mean-field equations has also been studied in \cite{della2022nonparametric,LangLu22,mavridis2022learning,messenger2022learning}. The closest to this study are \cite{LMT21_JMLR,LMT22,LZTM19pnas}, where the rate for learning the interaction kernels from multiple trajectories is $\big(\frac{M}{\log M}\big)^{-\frac{2\beta}{2\beta+1}}$, is suboptimal due to the use of supremum norm in the covering number argument. 
Building on these results, our study achieves the optimal rate in a simplified static model \eqref{eq:model}, advancing the understanding of the inference problem.

\paragraph*{Nonparametric deconvolution} 
 Nonlocal dependence is a key feature in nonparametric deconvolution, particularly in estimating probability densities as studied in \cite{fan1991optimal,Meister2009deconvolution}, among others. In such contexts, the underlying inverse problem in the large sample limit typically manifests as an ill-posed deconvolution challenge. The established optimal rate for these scenarios is $M^{-\frac{2\beta}{2\beta+2\alpha+1}}$, where $\alpha$ is the decay rate of the Fourier transform of the convolution kernel. In contrast, our study navigates a well-posed inverse problem made possible through the coercivity condition, differentiating it from the typical deconvolution framework.

\paragraph*{Linear regression for parametric inference and random matrices}
The normal matrix $\widebar{\bfA}_{n}^M$ in our study resembles the sample covariance matrix $\frac{1}{M}\sum_{m=1}^M x^{m}(x^{m})^\top$ in linear regression $y\approx \btheta^\top x $ from samples $\{(x^m,y^m)\}$ of a distribution on $\R^n \times \R$. Therefore, the analysis of this normal matrix can draw parallels from the study of sample covariance matrices with independent columns or entries, as explored in \cite{KM2015,LivshytsTikhomirovVershynin2021,MeiWangYao2023,MendelsonPaouris2014,Mourtada2022,Tikhomirov2018,Vershynin2018,Wainwright2019,Yaskov2015}.  
With notation $\bPhi^m =(R_{\psi_1}[X^{m}],\ldots,R_{\psi_n}[X^{m}])\in \R^{Nd\times n}=$ for each sample $X^m$, our least squares estimator can be analogized to a linear regression estimator for $Y \approx \btheta^\top \bPhi$ with a normal matrix $\bar{\bfA}_{n}^{M}=\frac{1}{MN}\sum_{m=1}^M [\bPhi^m]^\top \bPhi^m =\frac{1}{MN}\sum_{m=1}^M \sum_{i=1}^N [\bPhi_{i,\cdot}^m]^\top \bPhi_{i,\cdot}^m$. Because of the dependence between $\{\bPhi_{i,\cdot}^m\}_{i=1}^N$, $\widebar{\bfA}_{n}^M$ can not be viewed as an example of a sample covariance matrix with independent columns or entries. It is worth noting that there is ongoing interest in the study of sample covariance matrix with dependence, see, e.g., \cite{BVZ2021,Manrique2022,Oliveira2016,ShuNan2019,Vershynin2020}. 
Compared to these studies, the random matrices in nonparametric regression are normal matrices depending on the basis functions.

\section*{Notation} Throughout the paper, we use $C$ to denote universal constants independent of the sample size $M$ and the dimension $n$. The notations $C_\beta$ or $C_{\beta,L}$ denote constants depending on the subscripts. We use $\E_{\phi_*}$ to denote the expectation with respect to the joint distribution of $(X,Y)$ in Model \eqref{eq:model} where $Y$ depends on both $X$, $\eta$ and the true interaction kernel $\phi_*$. We omit the dependence on $\phi_*$, i.e., $\E=\E_{\phi_*}$, if the random variable only relies on $(X,\eta)$. We introduce the $L^2_{\rho}$ inner product as $\innerp{f,g}_{L^2_{\rho}}=\int f(r)g(r)\rho (dr)$ and denote $L^2_\rho$ norm by $\|f\|_{L_\rho^2}^2 = \int |f(r)|^2\rho (dr)$. For any operator $\calL: L_\rho^2\to L_\rho^2$, the operator norm is given by $\norm{\calL}_{\text{op}}=\sup_{\norm{f}_{L_\rho^2}=1}\|\calL f\|_{L^2_{\rho}}$. In addition, we define the $L^p_\rho$ norm by $\|f\|_{L_\rho^p}^p = \int |f(r)|^p\rho (dr)$ for all $p\geq 1$. 
Table \ref{tab:TableOfNotation2} summarizes the main notations. 

\smallskip

\begin{table}[htbp]
\centering
\vspace{-4mm}
\caption{Some notations}
\begin{tabular}{c l}
\toprule
\textbf{Notations} & \textbf{Description}   \\
\midrule
$M$, $N$ & Sample size and number of particles \\
$\rho$ (and $\rho'$)   & Exploration measure (and its density function) of pairwise distances \\
$\bar\calL$,\, $c_{\bar\calL}$   & The normal operator in \eqref{eq:operatorL} and its coercivity constant in Lemma {\rm\ref{lemma:coercive}} \\
$H_{\rho}^{\beta}(L)$ ,\, $\calC^{\beta}(L)$ & Sobolev and H\"older classes in Definitions \ref{def:wSobolev}-\ref{def:H\"older}\\
$\calH_n= \mathrm{span}\{\psi_k\}_{k=1}^n$ & Hypothesis space spanned by $n$ basis functions $\{\psi_k\}_{k=1}^n$ \\
$\bfA_{n}^M$, $\bfb_{n}^M$  & Normal matrix and normal vector in \eqref{eq:Ab} \\
\bottomrule
\end{tabular}
\label{tab:TableOfNotation2}
\vspace{-4mm}
\end{table}

\smallskip

The rest of the paper is organized as follows. We study the inverse problem in the large sample limit in Section \ref{sec:setting_IP}. In the process, we introduce assumptions and function spaces. Section \ref{sec:upperBd} introduces the tLSE and proves that the tLSE achieves the optimal rate, establishing an upper minimax rate. Section \ref{sec:lowerBd} proves the lower minimax rate via the hypothesis testing scheme. We present the technical results and proofs in the Appendix.

\section{Settings and inverse problem in large sample limit}\label{sec:setting_IP}
 
The well-posedness of the inverse problem in the large-sample limit lies at the heart of statistical inference. This section builds the foundation by imposing constraints on the distributions of $X$ and the noise in Section \ref{sec:settings}, setting a weighted function space in Section \ref{sec:rho}, and showing that the inverse problem is well-posed (see Section \ref{sec:normalOperator}). Finally, Section \ref{sec:SobolevClass} introduces the Sobolev and H\"older classes. Further details regarding the fractional Sobolev and H\"older classes (spaces) are provided in Appendix \ref{sec:append_Sob}.

\subsection{Assumptions on distributions}\label{sec:settings} 
Recall that the data $\{(X^m,Y^m)\}_{m=1}^M$ are i.i.d.~samples of $(X,Y)$ satisfying the model in \eqref{eq:model}. The joint distribution depends on the distributions of $X$, the noise $\eta$, and the interaction kernel $\phi$. We make some subsequent assumptions on the distributions of $X$ and $\eta$. Recall that a random vector $X=(X_1,\ldots,X_N)$ has an \emph{exchangeable} distribution if the joint distributions of $(X_i)_{i\in \mathcal{I}}$ and $(X_i)_{i\in \mathcal{I_\pi}}$ are identical, where $\mathcal{I}\subset \{1,\ldots,N\}$ and $\mathcal{I_\pi}$ is a permutation of $\mathcal{I}$.  
\begin{assumption}[Distribution of $X$]\label{assump:noise-X}
We assume the entries of the $(\R^d)^{\otimes N}$-valued random variable $X=(X_1,\ldots,X_N)$ satisfy the following conditions:
\begin{enumerate}[label=$(\mathrm{A\arabic*})$]
\item\label{Assump:exchangeable} The components of the random vector $X = (X_1, \ldots, X_N)$ are exchangeable and have finite mean. 
\item\label{Assump:CondIndpt} For each pair $\{X_i-X_j, X_i-X_{j'}\}$ with $j\neq j'$ and $j,j' \neq i$, there exists a $\sigma$-algebra $\calX_i$ such that the pair are conditionally independent. 
\item \label{Assump:cts}  For each pair $\{X_i-X_j, X_i-X_{j'}\}$ with $j\neq j'$ and $j,j' \neq i$, the joint probability density function is continuous. 
 \end{enumerate}
\end{assumption}

Assumptions \ref{assump:noise-X} \ref{Assump:exchangeable}--\ref{Assump:cts} are mild conditions to simplify the inverse problem of estimating the kernel $\phi$, and weaker constraints may replace them with more careful arguments as in \cite{LLMTZ21,LMT22}. The exchangeability in \ref{assump:noise-X} \ref{Assump:exchangeable} simplifies the exploration measure in Lemma \ref{lemma:rho}. The conditional independence in \ref{assump:noise-X} \ref{Assump:CondIndpt}, together with the exchangeability, enables the coercivity condition for the inverse problem to be well-posed, as detailed in Lemma \ref{lemma:coercive}. The continuity in Assumption \ref{assump:noise-X} \ref{Assump:cts} ensures that the exploration measure has a continuous density, which is used in proving the lower minimax rate in Section \ref{sec:lowerBd}.

A sufficient condition for Assumptions \ref{assump:noise-X} \ref{Assump:exchangeable}--\ref{Assump:CondIndpt} is that 
$(X_1,\ldots,X_N)$ are conditionally i.i.d.~in the sense that there exists a $\sigma$-algebra $\calX$ such that $\{X_i\}_{i=1}^N$ are i.i.d.  given $\calX$. The exchangeability follows from the fact that $\P\{ \bigcap_{i=1}^N X_i\in A_i \}=\E [\prod_{i=1}^N \E[\b1_{\{X_i\in A_i\}}\mid \calX]]=\E [\prod_{i=1}^N\E[\b1_{\{X_{\pi(i)}\in A_i\}}\mid \calX]]=\P\{\bigcap_{i=1}^N  X_{\pi(i)}\in A_i \}$ for any permutation $\pi$. Also, the random variables $X_i-X_j$ and $X_i-X_{j'}$ are conditionally independent given the $\sigma$-algebra $\calX_i=\sigma(X_i,\calX)$. We note that exchangeability has a long history in probability, statistics, and interacting particle systems. For example,   \cite{de1929funzione,DiaconisFreedman1980,Hoff2009,Kallenberg2005,LindleyNovick1981,LMT22} and references therein. Random variables in an exchangeable infinite sequence are conditionally i.i.d. by the well-known De Finetti theorem (e.g., \cite[Theorem 1.1]{Kallenberg2005}).

Examples of $X=(X_1,\ldots,X_N)$ satisfying \ref{assump:noise-X} \ref{Assump:exchangeable}--\ref{Assump:cts} are prevalent in applications. A convenient example is $X$ with i.i.d.~components. In particular, when $X$ has i.i.d.~components being uniformly distributed on $[0,1]$, we can compute the joint distribution explicitly further to analyze the inverse problem in the large sample limit as explained in Remark \ref{rmk:compact} and Appendix \ref{sec:unif-X}.

Another example is the Euler-Maruyama approximation of the interacting particle system \eqref{eq:model_SDEs} at the initial time. Specifically, consider the one-step transition 
$$X(t_1)\approx X(t_0)+R_\phi[X(t_0)] (t_1- t_0)+ \sigma (  W(t_1)-W(t_0) )$$ 
where $X(t_0)$ has an exchangeable distribution.  
The above equation gives our regression model \eqref{eq:model} with $Y=\frac{X(t_1)- X(t_0)}{t_1- t_0} $,  $X= X(t_0)$, and $\eta=\sigma \frac{ W(t_1)-W(t_0)}{t_1- t_0}$.   
Assumption \ref{assump:noise-X} \ref{Assump:exchangeable} is fulfilled since $X= X(t_0)$ forms an exchangeable random vector. Additionally, given $\calX_i=\mathcal{F}\{X(t_0),W_i(t_1)\}$, the pairs $X_j(t_1)-X_i(t_1)$ and $X_{j'}(t_1)-X_i(t_1)$ are independent, satisfying Assumption \ref{assump:noise-X} \ref{Assump:CondIndpt}. Clearly, Assumption \ref{assump:noise-X} \ref{Assump:cts} also holds within this context.

\begin{assumption}[Distribution of noise]\label{assump:noise}
The noise $\eta$ is independent of the random array $X$. Moreover, we assume the following conditions:
\begin{enumerate}[label=$(\mathrm{B\arabic*})$]
    \item\label{Assump:Fourth_Noise} The noise vector $\eta = (\eta_1, \ldots, \eta_N)$ consists of i.i.d. centered entries with finite variance $\sigma_\eta^2$ and bounded fourth moment.
    \item\label{Assump:Density_Noise} The density $p_\eta$ of $\eta$ satisfy that 
$\exists \ c_\eta > 0$: 
\[ \int_{\R^{Nd}} p_{\eta}(u) \log \frac{p_{\eta}(u)} {p_{\eta}(u + v)}du \leq c_\eta \Vert v\Vert ^2,\quad \forall\ v\in\R^{Nd}. 
\] 
\end{enumerate}
\end{assumption}

Assumption \ref{assump:noise} \ref{Assump:Fourth_Noise} on the noise is mild. The density Assumption \ref{assump:noise} \ref{Assump:Density_Noise} on the noise, as used in {\rm\cite[page 91]{tsybakov2008introduction}}, is more restrictive. For example, when $\eta\sim \mathcal{N} 
(0,\sigma_\eta^2I_d)^{\otimes N}$, both assumptions hold and the inequality in Assumption \ref{assump:noise} \ref{Assump:Density_Noise} holds with $c_\eta=Nd/(2\sigma_\eta^2)$. We also note that the noise need not be Gaussian; an example is provided in \cite{ZhangWangLu2025}. 
Importantly, the lower minimax bound in Theorem \ref{thm:L2rho_lower} requires Assumption \ref{assump:noise} \ref{Assump:Density_Noise}, whereas our upper minimax bound in Theorem \ref{cor:L2rho_upper} requires only Assumption \ref{assump:noise} \ref{Assump:Fourth_Noise}.

\subsection{Exploration measure}\label{sec:rho}
The first step in the regression is to set a function space of learning. We set the default function space of learning to be $L^2_\rho$ by defining a measure $\rho$ that quantifies the exploration of the interaction kernel by the data. 
The exploration measure serves as the analog of the probability measure of the independent variable in classical statistical learning.   
\begin{definition}[Exploration measure] \label{def:rho}
The exploration measure $\rho$ associated with independent variable of the interaction kernel in \eqref{eq:opt_model_R} is the large sample limit of the empirical measure $\rho_M$ of data $\{X^m\}_{m=1}^M$:  
\begin{equation}\label{eq:rho_all}
\begin{aligned}
\rho(A) & =\lim_{M\to \infty}\rho_{M}(A) = \frac{1}{N(N-1)} \sum_{i=1}^N\sum\nolimits_{j\neq i} \P{(\|X_i-X_j\|_{\R^d}\in A)},
\end{aligned}
\end{equation} 
where  $\rho_{M}(A)  = \sum\nolimits_{m=1}^M \sum\nolimits_{i=1}^N\sum\nolimits_{j\neq i}\frac{\mathbf{1}_{A}(\|X^m_i-X^m_j\|_{\R^d})}{MN(N-1)}$ for any Lebesgue measurable set  $A\subset \R^+$ and $\sum\nolimits_{j\neq i}$ means $\sum\nolimits_{j=1, j\neq i}^{N}$. 
\end{definition}

\begin{lemma}[Exploration measure under exchangeability]\label{lemma:rho}
 Under Assumption {\rm\ref{assump:noise-X}}, the measure $\rho$ is the distribution of $\|X_1-X_2\|_{\R^d}$ and has a continuous density.
\end{lemma}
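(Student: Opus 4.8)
The statement has two parts: first, that $\rho$ is the distribution of $|X_1-X_2|$; second, that this distribution has a continuous density. The plan is to exploit exchangeability for the first part and Assumption \ref{Assump:cts} for the second.

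For the first part, I would start from the explicit formula for $\rho$ in \eqref{eq:rho_all}, namely $\rho(A) = \frac{1}{N(N-1)}\sum_{i\neq j}\P(|X_i-X_j|\in A)$. Under Assumption \ref{Assump:exchangeable}, the pair $(X_i,X_j)$ has the same joint distribution as $(X_1,X_2)$ for every ordered pair $i\neq j$: indeed, the two-element index set $\{i,j\}$ can be mapped to $\{1,2\}$ by a permutation, and exchangeability then gives equality of the joint laws of $(X_i,X_j)$ and $(X_1,X_2)$ (being careful that we need the ordered version, which still follows since exchangeability is invariance under \emph{all} permutations, so we may send $i\mapsto 1, j\mapsto 2$). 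Hence each summand equals $\P(|X_1-X_2|\in A)$, and since there are $N(N-1)$ ordered pairs, the normalization gives $\rho(A)=\P(|X_1-X_2|\in A)$. This identifies $\rho$ as the law of $|X_1-X_2|$.

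For the second part, I would argue as follows. By Assumption \ref{Assump:cts}, the pair $\{X_1-X_2, X_1-X_3\}$ has a continuous joint density on $\R^d\times\R^d$ (this requires $N\geq 3$, which is assumed); marginalizing out the second coordinate $X_1-X_3$ yields a density for $X_1-X_2$ on $\R^d$, and this marginal density is continuous provided one can justify continuity under the integral — which follows from continuity of the joint density together with the fact that $X$ takes values in the compact set $([0,1]^d)^{\otimes N}$, so the marginalizing integral is over a bounded region and dominated convergence (or uniform continuity on compacts) applies. Denote this density of $Z:=X_1-X_2$ on $\R^d$ by $p_Z$, continuous and compactly supported in $[-1,1]^d$. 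Then $|X_1-X_2| = |Z|$, and the density of $|Z|$ at radius $r>0$ is obtained by integrating $p_Z$ over the sphere of radius $r$: $\rho(r) = r^{d-1}\int_{S^{d-1}} p_Z(r\omega)\, d\omega$ (up to the surface-measure normalization). Since $p_Z$ is continuous and $r\mapsto r^{d-1}$ is continuous, this is continuous in $r$ on $(0,\infty)$, and at $r=0$ it vanishes (for $d\geq 2$) or is handled directly (for $d=1$, where $\rho(r) = p_Z(r)+p_Z(-r)$ is continuous). Thus $\rho$ has a continuous density.

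The main obstacle I anticipate is not conceptual but a matter of care: (i) making sure exchangeability gives the \emph{ordered} pairwise equality in distribution rather than just equality of unordered pairs — this is fine because permutation invariance includes transpositions, but it should be stated cleanly; and (ii) rigorously passing continuity through the marginalization integral and the polar-coordinate change of variables, which needs the compact support / boundedness to invoke dominated convergence and to ensure the angular integral of a continuous function is continuous in the radial variable. Both steps are routine given the compactness of $[0,1]^d$ and the continuity hypothesis, so the proof should be short; I would present it in the order: (1) exchangeability $\Rightarrow$ each summand equals $\P(|X_1-X_2|\in A)$ $\Rightarrow$ $\rho = \mathrm{Law}(|X_1-X_2|)$; (2) Assumption \ref{Assump:cts} $\Rightarrow$ continuous density for $X_1-X_2$ on $\R^d$; (3) polar coordinates $\Rightarrow$ continuous density for $|X_1-X_2|$ on $\R^+$.
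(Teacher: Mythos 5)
Your proposal is correct and follows essentially the same route as the paper: exchangeability makes every summand in \eqref{eq:rho_all} equal to $\P(|X_1-X_2|\in A)$, so $\rho$ is the law of $|X_1-X_2|$, and continuity of the density comes from Assumption \ref{Assump:cts}. The only difference is that the paper asserts the continuity of the density directly from \ref{Assump:cts}, whereas you spell out the marginalization of the joint density of $(X_1-X_2,\,X_1-X_3)$ and the polar-coordinate step; this is just filling in details the paper leaves implicit, and your handling of them (dominated convergence on the compact support, the $r^{d-1}$ factor, the $d=1$ case) is sound.
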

\begin{proof}
The exchangeability in Assumption \ref{assump:noise-X} \ref{Assump:exchangeable} implies that the distributions of $X_i-X_j$ and $X_1-X_2$ are the same for any $i\neq j$. Hence, by definition, the exploration measure is the distribution of the random variable $\|X_1-X_2\|_{\R^d}$:
\begin{equation*}
	\rho(A) = \P{(\|X_1-X_2\|_{\R^d}\in A)}
\end{equation*}
which has a continuous density by Assumption \ref{assump:noise-X} \ref{Assump:cts}.  
\end{proof}

The exploration measure $\rho$ can be supported on either $[0,\infty)$ or bounded intervals. For example, the exploration measure has a density $\rho'(r)=\frac{r^{d-1}}{C_{\lambda}}e^{-\frac{r^2}{4\lambda}}\b1_{[0,\infty)}(r)$ with $C_\lambda = \frac{(4\lambda)^{\frac{d}{2}}}{2}\Gamma(d/2)$ when each triplet $(X_i,X_j,X_k)$ is a Gaussian vector in $\R^{3d}$ with an exchangeable joint distribution and $\textrm{cov}(X_i,X_j)=\lambda I_d$ with some $\lambda>0$; see \cite[Lemma 3.2]{LLMTZ21} for the details. The exploration measure has a bounded support when the $X_i$'s are independent and uniformly distributed on $[0,1]$; see Appendix \ref{sec:unif-X}.

\subsection{Inverse problem in the large sample limit}
\label{sec:normalOperator}
We show that the inference via minimizing the loss function in the large sample limit is a deterministic inverse problem. Importantly, the inverse problem is well-posed under Assumption \ref{assump:noise-X}.  

\begin{definition}[Normal Operator] \label{def:inv-op}
The \emph{normal operator} $\bar \calL: L^2_{\rho}\to L^2_{\rho}$ associated with the Model \eqref{eq:model} is 
 \begin{equation}\label{eq:operatorL}
	\begin{aligned}
		\langle \bar \calL \phi,\psi \rangle_{L^2_{\rho}} 
  &= \frac{1}{N}\sum\nolimits_{i=1}^N\E [\langle R_\phi[X]_i, R_\psi[X]_i\rangle_{\R^d} ],  \quad  \forall \phi,\psi\in L^2_\rho. 
	\end{aligned}
	\end{equation}
\end{definition}

\begin{definition}[Coercivity condition] \label{def:coercivity}
A self-adjoint linear operator $\bar\calL:L^2_\rho\to L^2_\rho$ is \emph{coercive} on $L^2_\rho$ with a constant $c_{\bar\calL}>0$ if 
\[ 
\innerp{\bar\calL\phi,\phi}_{L^2_\rho} \geq c_{\bar\calL} \|\phi\|_{L^2_\rho}^2, \quad \forall \phi\in L^2_\rho.
\] 
In other words, $\E[\|R_{\phi}[X]\|_{\R^{Nd}}^2]\geq N c_{\bar\calL} \|\phi\|_{L^2_\rho}^2$ for all $\phi\in L^2_\rho$.
\end{definition}

\begin{proposition}[Inverse problem in the large sample limit] \label{prop:invP}
Under Assumption {\rm\ref{assump:noise-X}} \ref{Assump:exchangeable}, the large sample limit of the empirical mean square loss function $\calE_{M}(\phi)$ in \eqref{eq:lossFn} is 
\[
\calE_\infty(\phi) = \E \Big[ \frac{1}{N} \|Y-R_{\phi}[X]\|_{\R^{Nd}}^2 \Big] = \innerp{\bar\calL\phi,\phi}_{L^2_\rho} - 2\innerp{\bar\calL\phi_{*},\phi}_{L^2_\rho} +\innerp{\bar\calL\phi_*,\phi_*}_{L^2_\rho} +\sigma_\eta^2d. 
\]
Moreover,  the expected loss function $\calE_\infty(\phi)$ is uniformly convex in $L^2_\rho$ if and only if $\bar\calL$ is coercive, and the ground truth function $\phi_{*}$ is the unique minimizer when $\bar\calL$ is coercive. 
\end{proposition}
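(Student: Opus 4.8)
The plan is to compute the large sample limit of $\calE_M(\phi)$ by a direct expansion, then read off convexity from the quadratic form it defines. First I would expand the squared norm inside $\calE_M(\phi)$ using $Y^m = R_{\phi_*}[X^m] + \eta^m$, writing
\[
\frac1N\|Y^m - R_\phi[X^m]\|^2 = \frac1N\|R_{\phi_*}[X^m]-R_\phi[X^m]\|^2 + \frac2N\langle \eta^m, R_{\phi_*}[X^m]-R_\phi[X^m]\rangle + \frac1N\|\eta^m\|^2.
\]
Averaging over $m$ and sending $M\to\infty$, the law of large numbers gives that $\calE_M(\phi)\to \E[\frac1N\|R_{\phi_*}[X]-R_\phi[X]\|^2] + 2\,\E[\frac1N\langle\eta, R_{\phi_*}[X]-R_\phi[X]\rangle] + \E[\frac1N\|\eta\|^2]$. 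Since $\eta$ is independent of $X$ and centered (Assumption \ref{assump:noise}), the cross term vanishes, and $\E[\frac1N\|\eta\|^2] = \sigma_\eta^2 d$ by \ref{Assump:Fourth_Noise} (there are $N$ i.i.d.\ $\R^d$-valued entries each with variance $\sigma_\eta^2$, divided by $N$). It remains to identify $\E[\frac1N\|R_{\phi_*}[X]-R_\phi[X]\|^2]$ with $\innerp{\bar\calL\phi,\phi}_{L^2_\rho} - 2\innerp{\bar\calL\phi_*,\phi}_{L^2_\rho} + \innerp{\bar\calL\phi_*,\phi_*}_{L^2_\rho}$; this is immediate from the bilinearity of $R_\bullet[X]$ in the kernel and the Definition \ref{def:inv-op} of $\bar\calL$, expanding $\|R_{\phi_*}[X]_i - R_\phi[X]_i\|^2 = \|R_{\phi_*}[X]_i\|^2 - 2\langle R_{\phi_*}[X]_i, R_\phi[X]_i\rangle + \|R_\phi[X]_i\|^2$ and summing over $i$. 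Absorbing $\innerp{\bar\calL\phi_*,\phi_*}_{L^2_\rho}$ into the constant is not quite what the statement says — rather, I should keep it as $\calE_\infty(\phi) = \innerp{\bar\calL\phi,\phi} - 2\innerp{\bar\calL\phi_*,\phi} + \sigma_\eta^2 d$ by noting that the displayed formula in the proposition has dropped the $\|\cdot\|_{L^2_\rho}$-independent term $\innerp{\bar\calL\phi_*,\phi_*}$; in fact $\E[\frac1N\|Y-R_\phi[X]\|^2] = \E[\frac1N\|R_{\phi_*}[X]\|^2] + \sigma_\eta^2 d + \innerp{\bar\calL\phi,\phi} - 2\innerp{\bar\calL\phi_*,\phi}$ and $\E[\frac1N\|R_{\phi_*}[X]\|^2] = \innerp{\bar\calL\phi_*,\phi_*}$, so one checks the two constants are consistent with how the proposition chooses to group them. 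I would present the clean identity and note the constant explicitly.

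For the second part, I would observe that $\calE_\infty(\phi) - \calE_\infty(\phi_*) = \innerp{\bar\calL(\phi-\phi_*),\phi-\phi_*}_{L^2_\rho}$, which follows by completing the square (using that $\bar\calL$ is self-adjoint, which is clear from the symmetric Definition \ref{def:inv-op}). Hence $\calE_\infty$ is a quadratic functional whose Hessian is $2\bar\calL$ in the $L^2_\rho$ inner product. By definition, $\calE_\infty$ is uniformly (i.e.\ strongly) convex on $L^2_\rho$ if and only if there is $c>0$ with $\innerp{\bar\calL h, h}_{L^2_\rho} \ge c\|h\|_{L^2_\rho}^2$ for all $h\in L^2_\rho$, which is exactly the coercivity of $\bar\calL$ (Definition \ref{def:coercivity}). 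When $\bar\calL$ is coercive, $\calE_\infty(\phi) - \calE_\infty(\phi_*) = \innerp{\bar\calL(\phi-\phi_*),\phi-\phi_*} \ge c_{\bar\calL}\|\phi-\phi_*\|_{L^2_\rho}^2 > 0$ whenever $\phi\neq\phi_*$ in $L^2_\rho$, so $\phi_*$ is the unique minimizer.

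The main obstacle — really the only non-bookkeeping point — is justifying the passage to the limit $\calE_M(\phi)\to\calE_\infty(\phi)$ and the claim that $\calE_\infty$ is genuinely the large sample limit: one must check that $\E[\frac1N\|R_\phi[X]\|^2] < \infty$ for $\phi\in L^2_\rho$ so that $\bar\calL$ is a well-defined bounded operator on $L^2_\rho$, and that $\E[\frac1N\|R_{\phi_*}[X]\|^2]<\infty$. This boundedness should follow from Definition \ref{def:rho} of the exploration measure together with the structure of $R_\phi$: by the triangle and Cauchy–Schwarz inequalities, $\|R_\phi[X]_i\| \le \frac1N\sum_{j\neq i}|\phi(|X_i-X_j|)|$, and taking expectations and using exchangeability (Assumption \ref{Assump:exchangeable}) reduces $\E[\frac1N\sum_i\|R_\phi[X]_i\|^2]$ to a constant times $\int|\phi|^2\,d\rho = \|\phi\|_{L^2_\rho}^2$ up to cross-terms controlled similarly — this is precisely where the definition $\rho(A)=\frac1{N(N-1)}\sum_{i\neq j}\P(|X_i-X_j|\in A)$ is used. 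I would state this boundedness as a short lemma or cite the relevant earlier discussion, since it is the one place the function-space setup actually enters; everything else is algebra and the definitions of convexity and coercivity.
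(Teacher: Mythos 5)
Your proposal is correct and follows essentially the same route as the paper's proof: expand the loss using $Y=R_{\phi_*}[X]+\eta$, kill the cross term by independence and centering of the noise, identify the quadratic form via the definition of $\bar\calL$, and obtain uniform convexity and uniqueness of the minimizer from the Hessian $2\bar\calL$ together with coercivity (the paper sets the gradient to zero where you complete the square — the same computation). Your side remark is also accurate: the displayed identity (and the paper's own expansion) silently drops the $\phi$-independent constant $\innerp{\bar\calL\phi_{*},\phi_{*}}_{L^2_\rho}=\frac{1}{N}\E\big[\|R_{\phi_{*}}[X]\|_{\R^{Nd}}^2\big]$, which affects neither the convexity claim nor the identification of $\phi_*$ as the unique minimizer.
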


\begin{proof} Recall that $Y=R_{\phi_{*}}[X] + \eta$ and $\eta$ is centered. By the definition of $\bar\calL$, we have 
\begin{align*}
	\E \Big[ \frac{1}{N} \|Y-R_{\phi}[X]\|_{\R^{Nd}}^2 \Big] 
	&=\frac{1}{N}\E \left[ \|R_{\phi}[X]-R_{\phi_*}[X]\|_{\R^{Nd}}^2  + \|\eta\|_{\R^{Nd}}^2\right] \\
	 & = \innerp{\bar\calL\phi,\phi}_{L^2_\rho} - 2\innerp{\bar\calL\phi_{*},\phi}_{L^2_\rho} + \innerp{\bar\calL\phi_*,\phi_*}_{L^2_\rho} + \sigma_\eta^2d. 
\end{align*}
The Hessian of $\calE_\infty$ is $\nabla^2 \calE_\infty= 2\bar\calL$, where $\nabla^2$ denotes the second-order Fr\'echet derivative in $L^2_\rho$. Hence, the loss function is uniformly convex if and only if $\bar\calL$ is coercive. 
Additionally, the minimizer of $\calE_\infty$ is a solution to $0= \nabla \calE_\infty(\phi) = 2\bar\calL  \phi - 2 \bar\calL  \phi_{*}$. Thus, if $\bar\calL$ is coercive, the unique minimizer is $\phi_{*}$. 
\end{proof}

Proposition \ref{prop:invP} implies that the inverse problem of minimizing the loss function is well-posed if and only if $\bar\calL$ is coercive. 
The next lemma shows that $\bar\calL$ is coercive with a constant $c_{\bar\calL} \geq \frac{N-1}{N^2}$. 
For simplicity, we denote 
\begin{equation}\label{Def:r+br}
	r_{ij}= \|X_i-X_j\|_{\R^d},\quad  \br_{ij}= \frac{X_i-X_j}{r_{ij}}=\frac{X_i-X_j}{\|X_i-X_j\|_{\R^d}}\,. 
\end{equation}
We define a operator $\calL_G: L^2_{\rho} \to L^2_{\rho}$ to be
\begin{equation}\label{eq:LG}
	\innerp{\calL_G\phi, \psi}_{L^2_\rho} = \E[\phi(r_{12})\psi(r_{13})\innerp{\br_{12},\br_{13}}_{\R^d}]. 
\end{equation}

\begin{lemma}
\label{lemma:coercive}
	Under Assumptions \ref{assump:noise-X} \ref{Assump:exchangeable}--\ref{Assump:CondIndpt}, the operator $\bar \calL$ in Definition {\rm \ref{def:inv-op}} is self-adjoint and has a decomposition
	\begin{equation} \label{eq:Lbar}
		\bar \calL = \frac{(N-1)(N-2)}{N^2}  \calL_G + \frac{N-1}{N^2} I, 
	\end{equation}
where the operator $\calL_G$ in \eqref{eq:LG} is non-negative. Also, $\bar \calL$ is coercive with a coercivity constant
	\begin{equation}\label{eq:lambda-min}
		c_{\bar\calL}\geq \frac{N-1}{N^2}\,. 
	\end{equation}
Moreover, under Assumptions \ref{assump:noise-X} \ref{Assump:exchangeable}--\ref{Assump:CondIndpt}, we have $\norm{\Lbar}_{\textup{op}}:=\sup_{\|\phi\|_{L^2_\rho}=1} \innerp{\Lbar \phi,\phi}\leq 1$.
\end{lemma}

\begin{proof}
By Assumption \ref{assump:noise-X} \ref{Assump:exchangeable}, the components of $X$ are exchangeable, $\br_{ij}$ and $\br_{ij'}$ are independent conditional on a $\sigma$-algebra $\calX_i$ for any $i=1,\cdots,N$. By exchangeability, $\E[\phi(r_{ij}) \phi(r_{ij'})\innerp{\br_{ij},\br_{ij'}}_{\R^d}] = \E[\phi(r_{12})\phi(r_{13})\innerp{\br_{12},\br_{13}}_{\R^d}]$ for all $j\neq j'$ and $\E[\|\phi(r_{ij})\bfr_{ij}\|_{\R^d}^2]=\E[\|\phi(r_{12})\bfr_{12}\|^2_{\R^d}]=\E[|\phi(r_{12})|^2]$ for all $i\neq j$. 
By the conditionally independence Assumption \ref{assump:noise-X} \ref{Assump:CondIndpt}, 
we have  
\begin{align*}
	\E[\phi(r_{12})\phi(r_{13})\innerp{\br_{12},\br_{13}}_{\R^d}\vert \calX_1] = \innerp{\E[\phi(r_{12})\br_{12} \vert \calX_1], \E[\phi(r_{13})\br_{13} \vert \calX_1]}_{\R^d} 
\end{align*}
and $\E[\phi(r_{12})\br_{12} \vert \calX_1]= \E[\phi(r_{13})\br_{13} \vert \calX_1]$ by exchangeability. Hence, 
 \begin{align*}
    \E[\phi(r_{12})\phi(r_{13})\innerp{\br_{12},\br_{13}}_{\R^d}]  &= \E[ \E[\phi(r_{12})\phi(r_{13})\innerp{\br_{12},\br_{13}}_{\R^d}\vert \calX_1]  ]\\
  &= \E[\innerp{\E[\phi(r_{12})\br_{12} \vert \calX_1], \E[\phi(r_{13})\br_{13} \vert \calX_1]}_{\R^d} ]  \\
  &=  \E[\big \| \E[\phi(r_{12})\br_{12} \vert \calX_1]\big \|^2_{\R^d} ] \geq 0.  
 \end{align*}
This means $\innerp{\calL_G\phi, \phi} \geq 0 $ for any $\phi\in L^2_\rho$. In other words, $\calL_G$ is non-negative.

Moreover, the decomposition \eqref{eq:Lbar} follows from  
\begin{align*}
\innerp{\bar\calL\phi,\phi}_{L^2_\rho} = & \frac{1}{N}\E[\innerp{R_\phi[X],R_\phi[X]}_{\R^{Nd}}]
 =  \frac{1}{N^3} \sum_{i=1}^N\sum_{j\neq i} \sum_{j'\neq i} \E[\phi(r_{ij}) \phi(r_{ij'})\innerp{\br_{ij},\br_{ij'}}_{\R^d} ] \\ 
= & \frac{N-1}{N^2} \E[\phi(r_{12})^2] + \frac{(N-1)(N-2)}{N^2} \E[\phi(r_{12})\phi(r_{13})\innerp{\br_{12},\br_{13}}_{\R^d}] \\ 
= & \bigg\langle\left(\frac{N-1}{N^2} I + \frac{(N-1)(N-2)}{N^2}  \calL_G \right) \phi, \phi\bigg\rangle_{L^2_\rho}\,.
\end{align*}
Thus, $\bar\calL$ is coercive with the constant in \eqref{eq:lambda-min} because $\calL_G$ is positive. 
Additionally, the second equality above also implies $\norm{\Lbar}_{\textup{op}}\leq 1$.   	
\end{proof}

\begin{remark}[Sharp coercivity constant]\label{rmk:compact}
 The lower bound for the coercivity constant in \eqref{eq:lambda-min} is sharp. It is achieved when the operator $\calL_G$ is compact, which is true under relatively weak constraints on $X$ by noticing that it is an integral operator (see e.g.,{\rm\cite{LLMTZ21,LMT22,LZTM19pnas}}). For example, $\calL_G$ is a compact integral operator when $X$ is uniformly distributed on $[0,1]^3$ illustrated in Appendix \ref{sec:unif-X}. 
\end{remark}

\begin{remark}[Differences from classical nonparametric estimation.]\label{rmk:loc-nonlocal}
A key distinction between the nonparametric estimation of the function $\phi$ in the classical model $Y=\phi(X)+\eta$, and our model, $Y=R_\phi[X]+\eta$, lies in the nature of the normal operator in the large sample limit. For the classical model, the normal operator is the identity operator (which follows by replacing the model in Definition {\rm\ref{def:inv-op}}), and the inverse problem in the large sample limit is always well-posed. Conversely, in our model, the normal operator may lack coercivity. This difference stems from the nonlocal dependence of $R_\phi$ on $\phi$, where $R_\phi[X]$ depends on a convolution of multiple values of $\phi$. Although Assumption \ref{assump:noise-X} \ref{Assump:CondIndpt} guarantees the coercivity of the normal operator, this nonlocal dependence introduces an additional bias term in the analysis of the least squares estimator, and we control this bias by relying on the coercivity condition, see \eqref{eq:btilde_bd} in Lemma {\rm \ref{lemma:projEst}}. 
\end{remark}

\subsection{Fractional Sobolev class and H\"older class}\label{sec:SobolevClass}

Function classes play a pivotal role in nonparametric regression by quantifying important properties of the underlying functions, such as smoothness, boundedness, and integrability. 
In this section, we introduce weighted Sobolev classes where the ground truth $\phi_*$ belongs, connect them with classical H\"older classes, and introduce two key assumptions on the function classes that connect the Sobolev embedding.  

Without loss of generality, we assume that $[0, 1] \subseteq \operatorname{supp}(\rho')$ and focus our kernel estimation on $[0, 1]$. In practice, this interval can be replaced by a compact subset of $\operatorname{supp}(\rho')$, for instance, the set $\{\rho' \ge \tau\}$ for some threshold $\tau > 0$ as in \cite{LMT21_JMLR, LMT22, LZTM19pnas}. Consequently, the function space for learning becomes $L^2_{\rho} = L^2_{\rho}([0, 1])$, with Sobolev and H\"{o}lder classes being subsets of this space. For brevity, we will omit the explicit domain of the functions whenever the context is clear.

Our primary focus is on the scenario where $\rho'$ is bounded below by a positive constant on $[0, 1]$, which yields uniformly bounded basis functions such as $\{\psi_k(x) =\sqrt{2}\sin(k\pi x)/ \sqrt{\rho'(x)}\}_{k=1}^{\infty}$ on $[0,1]$. Thus, we make the following assumption on the basis functions. 
\begin{assumption}[Uniformly bounded basis functions]\label{assum:bd-eigenfn} 
The orthonormal basis functions $\{\psi_k\}_{k= 1}^{\infty}$ of $L_{\rho}^2([0,1])$ are uniformly bounded with $C_{\max}:=\sup_{k\geq 1}\|\psi_k\|_{\infty}<\infty$.    
\end{assumption}

We also incorporate the case of \textbf{non-uniformly bounded} basis functions, which happens if $\rho'$ does not have a positive lower bound on $[0,1]$. We will replace the uniform boundedness by a mild growth condition: $\sup_{1\leq k\leq n} \| \psi_k\|_{\infty} \leq C_{\delta} n^{\delta}$ 
with $\delta\geq 0$ and $C_{\delta}>0$ depends only on $\delta$; see Assumption \ref{assum:L4-eigenfn}.

We first define two fractional (weighted) Sobolev classes $H^\beta_\rho(L)$ and $H^\beta_{0,\rho}(L)$ and two H\"older classes $\mathcal{C}^\beta(L)$ and $\mathcal{C}_0^\beta(L)$. The Sobolev classes are generalizations of the conventional unweighted Sobolev class (see, e.g., \cite[Definition 1.12]{tsybakov2008introduction}), which will be used for controlling the bias in the bias-variance tradeoff in the proof of the upper minimax rate. The H\"older classes will be used to prove the lower minimax rate. The fractional (weighted) Sobolev classes are defined in terms of the orthonormal basis functions $\{\psi_k\}_{k= 1}^{\infty}$. We impose the following condition on this basis.

\begin{definition}[Fractional Sobolev class]\label{def:wSobolev} 
Let $\{\psi_k\}_{k=1}^\infty$ be a complete orthonormal basis.  
For $\beta>0$ and $L>0$, define the \emph{fractional Sobolev class} $H_{\rho}^{\beta}(L)\subseteq L^2_\rho$ as
\begin{align}
	H_{\rho}^{\beta}(L) := \left\{\phi = \sum\nolimits_{k=1}^\infty \theta_k\psi_k \in L^2_\rho([0,1]): 
	{ \|\phi\|_{H_{\rho}^{\beta}}^2:=} \sum\nolimits_{k=1}^\infty k^{2\beta}\theta_k^2 \leq L^2/\pi^{2\beta} \right\}. 
\end{align}
In particular, we use the notation $H_{0,\rho}^{\beta}(L)$ as a specific instance of $H_{\rho}^{\beta}(L)$, where the basis functions are chosen as $\psi_k(x) = \sqrt{2}\sin(k\pi x)/\sqrt{\rho'(x)}$ for $k \geq 1$.
\end{definition}

\begin{definition}[H\"older classes]\label{def:H\"older}
For $\beta,L>0$, the H\"older class $\calC^{\beta}(L)$ on $[0,1]$ and the H\"older class with zero boundary conditions $\calC_0^{\beta}(L)$  are 
 \begin{equation}\label{eq:class-H\"older}
 \begin{aligned}
 	\calC^{\beta}(L)&=\Big\{f:[0,1] \to \R : | f^{(l)}(x)-f^{(l)}(y)|\leq L| x -y |^{\beta-l}, \forall x,y \in [0,1] \Big\},\\
 	\calC_0^{\beta}(L)&=\{f\in \calC^{\beta}(L): f^{(j)}(0)=f^{(j)}(1)=0, j=0,\cdots,l-1 \}\,.
 \end{aligned}
\end{equation}
where $f^{(j)}$ denotes the $j$-th order derivative of functions $f$ and $l =\lfloor\beta\rfloor$. 
\end{definition}

The Sobolev classes, similar to the H\"older classes, quantify the ``smoothness'' of a function in terms of the function's coefficient decay, as the following lemma shows.   
\begin{lemma}\label{lemma:wSobolev}
	Let $\phi= \sum_{k=1}^\infty \theta_k\psi_k\in H_{\rho}^{\beta}(L) \text{ or } H_{0,\rho}^{\beta}(L) $. Then  $\sum_{k=n+1}^\infty |\theta_k|^2 \leq L^2 n^{-2\beta} {/ \pi^{2\beta}}$ for all $n$. In particular, $\|\btheta\|_{\ell^2}^2\leq L^2 {/\pi^{2\beta}}$ and $\sup_k |\theta_k|^2 \leq L^2 {/ \pi^{2\beta}}$. 
\end{lemma}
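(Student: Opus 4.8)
The plan is to read off the bound directly from the definition of the Sobolev class $W_\rho(\beta,L)$, using only the summability structure of the $\ell^2$-ellipsoid $\Theta(\beta,L)$ and a monotonicity argument on the weights $k^{2\beta}$. First I would write $\phi = \sum_{k=1}^\infty \theta_k\psi_k$ with $\btheta\in\Theta(\beta,L)$, so that by definition $\sum_{k=1}^\infty k^{2\beta}\theta_k^2\leq L$. The key observation is that for $k\geq n+1$ we have $k^{2\beta}\geq (n+1)^{2\beta}\geq n^{2\beta}$ (using $\beta>0$), hence $\theta_k^2 \leq n^{-2\beta} k^{2\beta}\theta_k^2$ for every $k\geq n+1$.

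Summing this inequality over $k\geq n+1$ gives
\[
\sum_{k=n+1}^\infty |\theta_k|^2 \leq n^{-2\beta}\sum_{k=n+1}^\infty k^{2\beta}\theta_k^2 \leq n^{-2\beta}\sum_{k=1}^\infty k^{2\beta}\theta_k^2 \leq L n^{-2\beta},
\]
which is the first claim. The two ``in particular'' statements follow immediately: taking $n=1$ bounds the tail $\sum_{k\geq 2}|\theta_k|^2$, but actually to get $\|\btheta\|_{\ell^2}^2\leq L$ one uses instead that $\|\btheta\|_{\ell^2}^2 = \sum_{k=1}^\infty \theta_k^2 \leq \sum_{k=1}^\infty k^{2\beta}\theta_k^2 \leq L$ since $k^{2\beta}\geq 1$ for all $k\geq 1$ and $\beta>0$. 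Then $\sup_k|\theta_k|^2\leq \|\btheta\|_{\ell^2}^2\leq L$ is trivial.

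There is essentially no obstacle here; the statement is a routine unwinding of definitions, and the only mild point to be careful about is the direction of the inequality $k^{2\beta}\geq n^{2\beta}$ for $k\geq n$, which requires $\beta>0$ (true by hypothesis since the Sobolev class is only defined for $\beta>0$). I would present the three lines above as the full proof, perhaps noting explicitly that the first bound already implies $\sup_k|\theta_k|^2\leq L$ by taking $n=0$ formally, or more cleanly deriving all of them from $\sum_k k^{2\beta}\theta_k^2\leq L$ together with $k^{2\beta}\geq 1$.
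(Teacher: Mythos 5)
Your proof is correct and matches the paper's argument: both use the monotonicity $k^{2\beta}\geq n^{2\beta}$ for $k\geq n+1$ together with the ellipsoid constraint $\sum_k k^{2\beta}\theta_k^2\leq L$, and the paper likewise treats the two ``in particular'' claims as immediate from the definition (your route via $k^{2\beta}\geq 1$ is exactly what is meant). No gaps.
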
 
\begin{proof} It follows directly from the definition of the Sobolev class that 
\begin{align*}
    \sum\nolimits_{k=n+1}^\infty |\theta_k|^2 \leq n^{-2\beta} \sum\nolimits_{k=n+1}^\infty k^{2\beta}|\theta_k|^2 \leq L^2 n^{-2\beta} {/ \pi^{2\beta}}\,.
\end{align*} 
The last two statements also follow the definition directly. 
\end{proof}

\begin{remark}[H\"older class being a subset of the Sobolev class.]
\label{rmk:Soblev_Holder} 
 When $\beta$ is an integer, we have $\calC_0^{\beta}(L)\subseteq H_{0,\rho}^{\beta}(L)$.  In fact, note that the H\"older class is a subset of the conventional weighted Sobolev class $W_{0,\rho}^{\beta}(L)$ defined as 
 \begin{equation}\label{Eq:Sobolev}
\begin{aligned}
	W_{\rho}^{\beta}(L)&:=\Big\{f\in L^2_\rho([0,1]): f^{(\beta-1)} \text{ is a.c.}, \int_0^1 |f^{(\beta)}(x)|^2\rho(dx) \leq L^2\Big\}; \\
	W_{0,\rho}^{\beta}(L)&:=\Big\{ f\in W_{\rho}^{\beta}(L): f^{(j)}(0)=f^{(j)}(1)=0 \text{ for } j=0,\cdots,\beta-1 \Big\}\,, 
\end{aligned}
\end{equation}
where a.c.~means absolutely continuous; see e.g., {\rm \cite[Defintion 1.11]{tsybakov2008introduction}}. 
Since $\beta$ is an integer and $\rho'$ is continuous with positive lower and upper bounds, the (weighted) fractional Sobolev class $H_{0,\rho}^{\beta}(L)$ is equivalent to $W_{0,\rho}^{\beta}(L)$ by the same proof for {\rm\cite[Proposition 1.14]{tsybakov2008introduction}} when the basis functions are $\psi_k(x)=\sqrt{2}\sin\rbracket{k\pi x}/\sqrt{\rho'(x)}$, $x\in[0,1]$. Combining these two facts, we obtain that $\calC_0^{\beta}(L)\subset H_{0,\rho}^{\beta}(L)$. 
When $\beta$ is not an integer, the case is more complicated, and we discuss this case in Section \ref{sec:lowerBd} and Appendix \ref{sec:append_Sob}.
\end{remark}

One of the innovations in this study is to apply the following Sobolev embedding condition to obtain the optimal fourth-moment bounds for the normal vectors in Lemma \ref{lemma:mmtBd_v2} when $\beta\leq \frac 12$. These moment bounds are crucial for establishing variance estimates in Lemma \ref{Lem:Ab_conv}.

\begin{assumption}[Sobolev embedding condition]\label{assum:SoboEmb} 
Assume there exists a positive finite constant $\calK_{\beta}$ only depends on $\beta$ such that 
\begin{equation}\label{Assu:suff_L4}
\sup_{\phi\in H_{\rho}^{\beta}(L)}  \norm{\phi}_{L^{4}_{\rho}}^4 \leq \calK_{\beta} L^4\,.  
\end{equation}
\end{assumption}
In other words, Assumption \ref{assum:SoboEmb} means that the $L^4_{\rho}([0,1])$-norm is controlled by the $H_{\rho}^{\beta}(L)$ bound, similar to the \emph{fractional Sobolev embedding theorem} (\cite[Theorem 6.7]{Hitchhiker2012}).
We show next that it holds for $H_{\rho}^\beta(L)$ with $\beta> 1/2$ when the basis functions are uniformly bounded. In particular, condition \eqref{Assu:suff_L4}  holds for $H_{0,\rho}^\beta(L)$ with $\beta\geq 1/4$ by the fractional Sobolev embedding theorem.

\begin{lemma}
\label{lemma:SobEmbed}
The Sobolev embedding condition \eqref{Assu:suff_L4} holds for $H_{\rho}^\beta(L)$ with $\beta> 1/2$ under Assumption {\rm \ref{assum:bd-eigenfn}}. Furthermore, it holds for $H_{0,\rho}^\beta(L)$ with $\beta\geq 1/4$. 	
\end{lemma}
\begin{proof}
When $\beta>1/2$, Assumption {\rm \ref{assum:bd-eigenfn}} immediately implies the uniform boundedness of any function $\phi=\sum_{k=1}^\infty \theta_k\psi_k\in  H_{\rho}^{\beta}(L)$: 
\[
\|\phi\|_{\infty}\leq \sum\nolimits_{k}|\theta_k|\leq \Big(\sum\nolimits_{k}k^{-2\beta}\Big)^{1/2} \Big(\sum\nolimits_{k}k^{2\beta}|\theta_k|^2\Big)^{1/2} \leq C_{\beta}L {/ \pi^{\beta}}\,,
\]
where $C_{\beta}:=(\sum\nolimits_{k}k^{-2\beta})^{1/2}\leq \frac{2\beta}{2\beta-1}$. 
Consequently, Eq. \eqref{Assu:suff_L4} holds with $\calK_\beta = C_\beta^4$. 

Next, we show that the Sobolev embedding condition \eqref{Assu:suff_L4} holds for fractional Sobolev class $H_{0,\rho}^\beta(L)$ with $\beta\geq 1/4$. First, when $\beta>1/2$, note that the basis functions $\psi_k(x) =\sqrt{2}\sin(k\pi x)/ \sqrt{\rho'(x)}$ of $L^2_{\rho}=L^2_{\rho}([0,1])$ are uniformly bounded, i.e., they satisfy Assumption {\rm \ref{assum:bd-eigenfn}}. Hence, the Sobolev embedding condition holds.  
When $\beta\leq \frac 1 2$, we resort to the fractional Sobolev embedding theorems. Since $\rho'$ has positive lower and upper bounds, we have $H_{0,\rho}^{\beta} = W_{0,\rho}^{\beta}([0,1]) = W_{0}^{\beta}([0,1])$, where $W_{0,\rho}^{\beta}([0,1])$ and $W_{0}^{\beta}([0,1])$ are the classical weighted and unweighted fractional Sobolev spaces defined using singular integrals, respectively; see Appendix \ref{sec:append_Sob}. Then,  when $\beta<\frac{1}{2}$, we have, by the fractional Sobolev embedding theorem in \cite[Theorem 6.7]{Hitchhiker2012},  
\begin{align}\label{Embed:Sob}
	\norm{\phi}_{L^q}\leq C_{\beta,q} \norm{\phi}_{W^{\beta}}\,, \text{ for any }q\in[1,\frac{2}{1-2\beta}]
\end{align}
for a constant $C_{\beta,q}>0$ uniformly for all $\phi\in H_{0,\rho}^{\beta}= W^{\beta}_0([0,1])$. 
When $\beta=\frac 12$, by  \cite[Theorem 6.10]{Hitchhiker2012}, we have \eqref{Embed:Sob} holds for any $q\in[1,\infty)$. Thus, applying these embedding inequalities with $q=4$ in \eqref{Embed:Sob} to bound $\norm{\phi}_{L^{4}}$ as in \eqref{Assu:suff_L4} by $\norm{\phi}_{W_{\rho}^{\beta}}$ and then employing the connection between Gagliardo fractional Sobolev class and spectral Sobolev class established in Lemma \ref{Lem:spectral<Gagliardo}, we obtain \eqref{Assu:suff_L4} with $\tilde{C}_{\beta}=2^{2\beta} \int_0^{\infty} \frac{1-\cos(h)}{h^{1+2\beta}}dh$ and $\calK_\beta  = [2\tilde{C}_{\beta}C_{\beta,4}{/ \pi^{\beta}}] ^4$  provided that $\frac{2}{1-2\beta}\geq 4$, equivalently,  $\beta \geq \frac 14$.
\end{proof}

\begin{remark}[The case $\beta<1/4$]\label{rmk:SobEmbed}
To ensure the Sobolev embedding condition, additional constraints on the exploration measure and the basis functions are necessary. The Sobolev class $H_{0,\rho}^{\beta}(L)$ provides such an example, and it suggests that $\beta\geq 1/4$ is necessary for the Sobolev embedding condition with the $L^4_\rho$-norm. Thus, to study the case $\beta<1/4$, one has to replace the $L^4_\rho$ norm with an $L^q_\rho$-norm with $q<4$, and correspondingly, replace the assumption on the fourth moment with a higher-order moment.  
\end{remark}


\smallskip

\section{Upper minimax rate}\label{sec:upperBd}

In this section, we establish an upper minimax rate of $M^{-\frac{2\beta}{2\beta + 1}}$ by introducing the \emph{tamed least squares estimator} (tLSE), as detailed in Theorem \ref{thm:L2rho_upper}. The tLSE not only achieves this rate efficiently but also provides a relatively simple proof. Its efficacy extends beyond the scope of this study, marking the tLSE a valuable tool for proving upper minimax rates for general nonparametric regression.

\subsection{A tamed least squares estimator}\label{sec:tLSE}
Given the data $\{(X^m,Y^m)\}_{m=1}^M$, we construct an estimator based on the loss function of the empirical mean square error in \eqref{eq:lossFn}, optimized over a hypothesis space $\calH_n=\mathrm{span}\{\psi_k\}_{k=1}^n$. 
Since $R_\phi$ is linear in $\phi$, the loss function is quadratic in $\phi$, and one can solve for the minimizer by least squares. 
We introduce the forthcoming tamed least squares estimator. 
\begin{definition}[Tamed least squares estimator (tLSE)]\label{def:tlse}
The {tamed least squares estimator} in $\calH_n= \mathrm{span}\{\psi_k\}_{k=1}^n$ is $	\widehat \phi_{n,M} = \sum_{k=1}^{n} \widehat \theta_k \psi_k$  with $\widehat \btheta_{n,M} = (\widehat \theta_1,\ldots,\widehat \theta_n)^\top$ solving by  
\begin{equation}\label{eq:tame-lse}
\begin{aligned}
	\widehat\btheta_{n,M}&= [\widebar{\bfA}_{n}^M]^{-1} \widebar{\bfb}_{n}^M \mathbf{1}_{\{ \lambda_{\min}(\widebar{\bfA}_{n}^M)> \frac{1}{4} c_{\bar\calL}\} } = \begin{cases}
		 0\,,&  \text{if }\lambda_{\min}(\widebar{\bfA}_{n}^M)\leq \frac{1}{4} c_{\bar\calL}\,; \\
		 [\widebar{\bfA}_{n}^M]^{-1} \widebar{\bfb}_{n}^M\,,& \text{if }\lambda_{\min}(\widebar{\bfA}_{n}^M)> \frac{1}{4} c_{\bar\calL}\,.
	\end{cases}
\end{aligned}
\end{equation}
where $\widebar{\bfA}_{n}^{M}$ and $\widebar{\bfb}_{n}^{M}$ are the \emph{normal matrix} and \emph{normal vector}, respectively
	\begin{subnumcases}{\label{eq:Ab}}
		\widebar{\bfA}_{n}^{M}(k,l)  = \frac{1}{MN}\sum\nolimits_{m=1}^M\innerp{R_{\psi_k}[X^{m}], R_{\psi_l}[X^{m}]}_{\R^{Nd}}, \label{eq:Ab_A}\\ 
		\widebar{\bfb}_{n}^M(k) = \frac{1}{MN}\sum\nolimits_{m=1}^M\innerp{R_{\psi_k}[X^{m}], Y^{m}}_{\R^{Nd}},\label{eq:Ab_b}
	\end{subnumcases}  
	and the constant $c_{\bar\calL}$ is the coercivity constant in \eqref{eq:lambda-min}.  
\end{definition}
The threshold in \eqref{eq:tame-lse}, denoted as $\frac{1}{4}c_{\bar\calL}$, can be eased to $\frac{1-\epsilon}{2}c_{\bar\calL}$ for any $\epsilon\in (0,1)$, as demonstrated in Lemma \ref{thm:min-eigen}.

We emphasize that the tLSE is not the widely used least squares estimator (LSE): 
\begin{equation}\label{eq:lse}
	\widehat\btheta_{n,M}^{lse}= [\widebar{\bfA}_{n}^M]^{\dag} \widebar{\bfb}_{n}^M
\end{equation}
where $A^\dagger$ of a matrix $A$ denotes its Moore-Penrose inverse satisfying $A^\dag A = A A^\dag  = I_{\text{rank}(A)}$. The tLSE differs from the LSE in the random set $\{\lambda_{\min}(\widebar{\bfA}_{n}^M)\leq \frac{1}{4}c_{\bar\calL}\}$: in this set, the tLSE simply is zero while the LSE retrieves information from data by pseudo-inverse. The probability of this set decays exponentially as $M$ increases (see Section \ref{sec:prob_eig_bd}), making the tLSE and LSE the same with a high probability. However, this probability is non-negligible, as we show in Remark \ref{rmk:singularA} below that the normal matrix may be singular with a positive probability. 
\begin{remark}[Positive probability of a singular normal matrix]\label{rmk:singularA}
We construct an example showing that the normal matrix $\widebar{\bfA}_{n}^M$ can be singular with a positive probability.  Consider three particles independently and uniformly distributed over $[0,1]$, namely, $X_1, X_2, X_3 \overset{i.i.d.}{\sim}U([0,1])$. We have $r_{12}=|X_1-X_2|\sim \rho'(r)=2(1-r)\b1_{[0,1]}(r)$ and $R_{\phi}[X]=\frac 12 \phi(|X_1-X_2|)\frac{X_1-X_2}{|X_1-X_2|}+\frac 12 \phi(|X_1-X_3|)\frac{X_1-X_3}{|X_1-X_3|}$. Let $\phi(r)=2 \b1_{[1/2,1]}(r)$. Note that $\|\phi\|_{L^2_{\rho}}^2 = 
		\int_0^1 |\phi(r)|^2 \rho'(r)dr=\int_{1/2}^1 4\cdot 2(1-r)dr=1$\,.
Thus,  if $\phi$ is one of the basis functions in the definition of $\bar{\bfA}_{n}^{M}$, we have 
$
\lambda_{\min}(\bar{\bfA}_{n}^{M}) \leq \frac{1}{MN}\sum_{m=1}^M \|R_{\phi}[X^m]\|_{\R^{N}}^2  
$.  
As a result, 
	\begin{align*}
      &  \P \big\{	\lambda_{\min}(\bar{\bfA}_{n}^{M}) =0 \big\} \geq \P \Big\{\frac {1}{MN} \sum\nolimits_{m=1}^M \|R_{\phi}[X^m]\|_{\R^{N}}^2= 0 \Big\} \geq \left(\P \big\{\|R_{\phi}[X]\|_{\R^{N}}^2= 0 \big\}\right)^M \\
	\geq &  \Big( \P\Big\{ \bigcap_{i\neq j} \{|\phi(|X_i-X_j|)|=0\} \Big\} \Big)^M  \geq  \Big( \P\Big\{ \bigcap_{i=1}^3 \{ X_i\in[0,1/8]\} \Big\}  \Big)^M  \geq 8^{-3M}\,.
	\end{align*}
For any $N$, we have similarly :
	$\P\big\{ \lambda_{\min}(\bar{\bfA}_{n}^{M}) =0 \big\} \geq   \left( \P\Big\{ \bigcap_{i=1}^N \{ X_i\in[0,1/8]\} \Big\}  \right)^M \geq \frac{1}{8^{NM}}\,.$
\end{remark}

A major advantage of the tLSE over the regular LSE is its appealing effectiveness in proving the optimal upper minimax rate. The main challenge in proving the convergence rate for the LSE is to control the variance term $\E[\|\widehat\btheta_{n,M}^{lse}-\btheta^* \|^2_{\ell^2}]$ uniformly in $n$, where $\btheta^*$ denotes the true parameter. Since the LSE uses the pseudo-inverse, one has to study the negative moments of the normal matrices. However, as Remark \ref{rmk:singularA} shows, the normal matrix can be singular with a positive probability; hence the negative moments are unbounded. Thus, one has to either study additional conditions for the negative moments to be uniformly bounded for all $n$, or properly study regularized least squares \cite{Gyorfi06a,tsybakov2008introduction}. 

In contrast, the tLSE achieves the minimax rate with a notably simpler proof, requiring only the coercivity condition. The key component is that the left tail probability of $\{\lambda_{\min}(\widebar{\bfA}_{n}^M) \leq \frac {c_{\bar\calL}}{4}\}$ is negligible in the bias-variance tradeoff. 

We will primarily utilize the matrix Chernoff inequality to estimate the left tail probability. Additionally, the matrix Bernstein inequality and the PAC-Bayes inequality will be employed to derive analogous results, facilitating comparisons with the Chernoff left tail probability.

The forthcoming lemma shows that in the large sample limit, the normal matrix is invertible. Then, the tLSE is the same as the LSE, and it recovers the projection of the true function in the hypothesis space with a controlled error. 

\begin{lemma}\label{lemma:projEst} 
Set $\phi_*=\sum_{k=1}^\infty \theta_k^* \psi_k$ be the true kernel. Under Assumption {\rm\ref{assump:noise-X}},  Assumption {\rm\ref{assump:noise} \ref{Assump:Fourth_Noise}} and Assumption {\rm \ref{assum:bd-eigenfn}}, for all $1\leq k,l\leq n$, the limits $\widebar{\bfA}_{n}^\infty(k,l)=\lim_{M\to \infty} \widebar{\bfA}_{n}^{M}(k,l)$ and $\widebar{\bfb}_{n}^\infty(k)=\lim_{M\to \infty} \widebar{\bfb}_{n}^M(k)$ exist almost surely and satisfy
\begin{subnumcases}{\label{Eq:AbEE}}
	\widebar{\bfA}_{n}^{\infty}(k,l) = \frac 1N \E[\innerp{R_{\psi_k}[X],R_{\psi_l}[X]}_{\R^{Nd}}] =\innerp{\bar\calL\psi_k,\psi_l}_{L^2_\rho} \,,\quad 1\leq k,l\leq n\,;  \label{Eq:AbEE_A} \\
 \widebar{\bfb}_{n}^{\infty}(k)  = \frac 1N \E[\innerp{R_{\psi_k}[X],Y}_{\R^{Nd}}] =\innerp{\bar\calL\psi_k,\phi_*}_{L^2_\rho} ,\quad 1\leq k \leq n \,,\label{Eq:AbEE_b}
\end{subnumcases}
and the smallest eigenvalue of the deterministic norm matrix $\widebar{\bfA}_{n}^\infty$ satisfies $\lambda_{\min}(\widebar{\bfA}_{n}^\infty) \geq c_{\bar\calL}>0$.  Importantly, we have
\begin{equation}\label{Eq:Asyp_theta}
	\btheta_n^*=(\theta_1^*,\theta_2^*,\cdots,\theta_n^*)^\top=[\widebar{\bfA}_{n}^\infty]^{-1}\widebar{\bfb}_{n}^\infty - [\widebar{\bfA}_{n}^\infty]^{-1}\widetilde{\bfb}_{n}^\infty, 
\end{equation}
where $\widetilde{\bfb}_{n}^{\infty}(k)  :=\innerp{\bar\calL\psi_k,\phi_{*,n}^\perp}_{L^2_\rho}$ for $1\leq k \leq n$ with $\phi_{*,n}^\perp := \sum_{l= n+1}^\infty \theta_l^*\psi_l$, and 
\begin{equation}\label{eq:btilde_bd}
	 \| [\widebar{\bfA}_{n}^\infty]^{-1}\widetilde{\bfb}_{n}^\infty\|_{\R^n}^2 =
	 \|\btheta_n^* -[\widebar{\bfA}_{n}^\infty]^{-1}\widebar{\bfb}_{n}^\infty \|_{\R^n}^2
	  \leq c_{\bar\calL}^{-2}  \sum\nolimits_{l= n+1}^\infty |\theta_l^*|^2=:\epsilon_n^*\,. 
\end{equation}
\end{lemma}

\begin{proof} 
The existence of the limits $\widebar{\bfA}_{n}^\infty(k,l)=\lim\limits_{M\to \infty} \widebar{\bfA}_{n}^{M}(k,l)$ and $\widebar{\bfb}_{n}^\infty(k)=\lim\limits_{M\to \infty} \widebar{\bfb}_{n}^M(k)$ follows from the strong law of large numbers. 
The equations in \eqref{Eq:AbEE} follow directly from the definitions of the operator and $Y=R_{\phi_*}[X]+\eta$. 
To show the bound for the smallest eigenvalue of the expectation of the normal matrix, note that for any $\btheta = (\theta_1,\ldots,\theta_n)\in \R^n$, Eq. \eqref{Eq:AbEE_A} and properties of norm operator in Lemma \ref{lemma:coercive} imply that 
\[
\btheta^\top \widebar{\bfA}_{n}^\infty \btheta = \sum\nolimits_{k,l=1}^n \theta_k\theta_l\innerp{\bar\calL\psi_k,\psi_l}_{L^2_\rho}
= \innerp{\bar\calL\sum\nolimits_{k=1}^n\theta_k\psi_k,\sum\nolimits_{l=1}^n\theta_l\psi_l}_{L^2_\rho} \geq c_{\bar\calL} \|\sum\nolimits_{k=1}^n\theta_k\psi_k\|^2_{L^2_\rho} =  c_{\bar\calL}\|\btheta\|_{\R^n}^2\,.
\]
Also, Eq.\eqref{Eq:Asyp_theta} follows from the fact that for any $k=1,\cdots,n$
\begin{align*}
     \widebar{\bfb}_{n}^{\infty}(k) & = \innerp{\bar\calL\psi_k,(\sum\nolimits_{l= 1}^n + \sum\nolimits_{l= n+1}^\infty) \theta_l^*\psi_l}_{L^2_\rho} \\
     & = [\widebar{\bfA}_{n}^\infty \btheta_n^*](k) + \innerp{\bar\calL\psi_k, \phi_{*,n}^\perp}_{L^2_\rho}\\
     &= [ \widebar{\bfA}_{n}^\infty \btheta_n^*](k) +\widetilde{\bfb}_{n}^{\infty}(k).
\end{align*}
We proceed to prove Eq. \eqref{eq:btilde_bd}. Since $\bar\calL$ is self-adjoint, by Parseval's identity and definition of operator norm, we have that 
\[
\| \widetilde{\bfb}_{n}^{\infty}\|_{\R^n}^2 = \sum\nolimits_{k=1}^n | \innerp{\psi_k, \bar\calL\phi_{*,n}^\perp}_{L^2_\rho}|^2 \leq \sum\nolimits_{k=1}^\infty | \innerp{\psi_k, \bar\calL\phi_{*,n}^\perp}_{L^2_\rho}|^2=  \| \bar\calL\phi_{*,n}^\perp\|_{L^2_\rho}^2 \leq  \|\bar\calL \|^2_{\text{op}} \|\phi_{*,n}^\perp\|_{L^2_\rho}^2\,. 
\]
Hence, applying $ \| [\widebar{\bfA}_{n}^\infty]^{-1}\|_{\text{op}}^2 = \lambda_{\min}(\widebar{\bfA}_{n}^\infty)^{-2} \leq  c_{\bar\calL}^{-2} $, contraction inequality $\|\bar\calL \|^2_{\text{op}}\leq 1$ from  Lemma \ref{lemma:coercive}, and $\|\phi_{*,n}^\perp\|_{L^2_\rho}^2 = \sum_{l= n+1}^\infty |\theta_l^*|^2$, we obtain 
\begin{align*}
     \| [\widebar{\bfA}_{n}^\infty]^{-1}\widetilde{\bfb}_{n}^\infty\|_{\R^n}^2  
	 \leq \| [\widebar{\bfA}_{n}^\infty]^{-1}\|_{\text{op}}^2 \|\widetilde{\bfb}_{n}^\infty\|_{\R^n}^2 \leq c_{\bar\calL}^{-2} \|\bar\calL \|^2_{\text{op}} \|\phi_{*,n}^\perp\|_{L^2_\rho}^2 \leq c_{\bar\calL}^{-2} \sum\nolimits_{l= n+1}^\infty |\theta_l^*|^2\,. 
\end{align*}
Then, the inequality \eqref{eq:btilde_bd} follows by combining the above inequality with Eq.\eqref{Eq:Asyp_theta}. 
\end{proof}

\smallskip

The extra bias term $[\widebar{\bfA}_{n}^\infty]^{-1}\widetilde{\bfb}_{n}^\infty$, controlled by \eqref{eq:btilde_bd}, underscores a notable distinction between the classical local model and our nonlocal model in nonparametric regression. 
It is absent in the classical nonparametric estimation, where the normal matrix is the identity matrix and the normal vector is the projection of $\btheta^*$, since the normal operator is the identity operator. Therefore, this extra term is directly attributable to the nonlocal dependence, and we call it \emph{nonlocal bias}. It leads to an extra term (in the variance) in the bias-variance tradeoff, as we will show in Lemma \ref{Lem:Ab_conv}. Importantly, the coercivity constant plays a pivotal role in controlling the nonlocal bias term, as shown in \eqref{eq:btilde_bd}. Thus, as long as the coercivity condition holds, the nonlocal dependence does not affect the minimax rate resulting from the bias-variance tradeoff.

\begin{remark}\label{rmk:tlse-regu}
The tLSE also differs from commonly used regularized estimators in practice: the regularized LSE by truncated SVD or Tikhonov regularization (see, e.g., {\tcb{}\rm \cite{cucker2002_BestChoicesb,hansen1987_TruncatedSVD,LLA22}}), or the truncated LSE that uses a cutoff to make the estimator bounded. These three estimators retrieve information from data by tackling the challenge from an ill-conditioned or even singular normal matrix.  
In contrast, the tLSE is zero when the normal matrix has an eigenvalue smaller than the threshold, abandoning the estimation task without extracting information from data. The tLSE has a theoretical advantage over these practical estimators: it achieves the optimal minimax rate based on the coercivity condition, while the analysis of LSE typically involves bounding the negative moments of the small eigenvalues of the normal matrix. \end{remark}

\subsection{Upper minimax rate: uniformly bounded basis functions}\label{Sec:UpbdMini}
Our main result is the forthcoming theorem, which shows that the tamed LSE achieves the minimax convergence rate when the dimension of the hypothesis space is properly selected.

\begin{theorem}[Upper minimax rate]\label{cor:L2rho_upper}
Suppose Assumption {\rm\ref{assump:noise-X}} and Assumption {\rm\ref{assump:noise} \ref{Assump:Fourth_Noise}} on the model hold, and Assumption {\rm\ref{assum:bd-eigenfn}} on the basis functions hold. 
If  $\beta> \frac 12$, then
\begin{equation}\label{ineq:upper_main2}
\limsup_{M\to\infty} \inf_{\widehat{\phi}} \sup_{\phi_*\in H_{\rho}^{\beta}(L)}    \E_{\phi_*}\left[ M^{\frac{2\beta}{2\beta+1}} \| \widehat{\phi}-\phi_* \|_{L^2_\rho} ^2 \right] \leq C_{\textup{upper}}\,,
\end{equation}
where $C_{\textup{upper}}>0$ is a constant in Eq. \eqref{ineq:upper_main} and the infimum is among all estimators $\widehat \phi_M$ inferred from data $\{(X^m,Y^m)\}_{m=1}^M$. 
Furthermore, the upper bound \eqref{ineq:upper_main2} holds for all $\beta\geq \frac 14$ if the Sobolev embedding condition in Assumption {\rm\ref{assum:SoboEmb}} is also satisfied. 
\end{theorem}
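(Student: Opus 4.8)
The plan is to show that the tamed LSE $\widehat\phi_{n,M}$ of Definition~\ref{def:tlse}, with the dimension tuned to $n=n_M:=\lceil M^{1/(2\beta+1)}\rceil$, attains the rate, via a bias--variance decomposition in which the ``tamed'' piece is absorbed using only the left-tail eigenvalue bounds. Fix $\phi_*=\sum_{k\ge 1}\theta_k^*\psi_k\in W_\rho(\beta,L)$ and write $\phi_{*,n}=\sum_{k\le n}\theta_k^*\psi_k$, $\phi_{*,n}^\perp=\sum_{k>n}\theta_k^*\psi_k$, $\btheta_n^*=(\theta_1^*,\dots,\theta_n^*)^\top$. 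Orthonormality of $\{\psi_k\}$ in $L^2_\rho$ gives
\[
\E_{\phi_*}\big[\|\widehat\phi_{n,M}-\phi_*\|_{L^2_\rho}^2\big]=\E_{\phi_*}\big[\|\widehat\btheta_{n,M}-\btheta_n^*\|_{\R^n}^2\big]+\sum_{k>n}(\theta_k^*)^2 ,
\]
and the last (hypothesis-space bias) term is $\le Ln^{-2\beta}$ by Lemma~\ref{lemma:wSobolev}. It therefore suffices to bound the first term by $C\big(n^{-2\beta}+n/(MN)\big)$ plus a remainder that is $o\big(M^{-2\beta/(2\beta+1)}\big)$, all uniformly over $\phi_*\in W_\rho(\beta,L)$, since $n=n_M$ then balances the two leading pieces at order $M^{-2\beta/(2\beta+1)}$.

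Split on the event $\calB:=\{\lambda_{\min}(\widebar{\bfA}_{n}^{M})\le\tfrac14 c_{\bar\calL}\}$. On $\calB$ the tLSE is $\widehat\btheta_{n,M}=0$, so the contribution is $\|\btheta_n^*\|_{\R^n}^2\mathbf{1}_{\calB}\le L\,\mathbf{1}_{\calB}$ by Lemma~\ref{lemma:wSobolev} --- this \emph{uniform} crude bound, multiplied by a small probability, is precisely why taming works. On $\calB^c$ the matrix is invertible with $\|[\widebar{\bfA}_{n}^{M}]^{-1}\|\le 4/c_{\bar\calL}$, and substituting $Y=R_{\phi_*}[X]+\eta$ together with the linearity of $R_{(\cdot)}$ into \eqref{eq:Ab_b} yields the algebraic identity $\widebar{\bfb}_{n}^{M}=\widebar{\bfA}_{n}^{M}\btheta_n^*+\widetilde{\bfb}_{n}^{M}+\boldsymbol{\xi}^{M}$, where $\widetilde{\bfb}_{n}^{M}(k)=\tfrac1{MN}\sum_m\innerp{R_{\psi_k}[X^m],R_{\phi_{*,n}^\perp}[X^m]}_{\R^{Nd}}$ is a finite-sample version of the nonlocal-bias vector $\widetilde{\bfb}_{n}^{\infty}$ from Lemma~\ref{lemma:projEst} and $\boldsymbol{\xi}^{M}(k)=\tfrac1{MN}\sum_m\innerp{R_{\psi_k}[X^m],\eta^m}_{\R^{Nd}}$ is the noise cross-term. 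Hence on $\calB^c$ one has $\widehat\btheta_{n,M}-\btheta_n^*=[\widebar{\bfA}_{n}^{M}]^{-1}(\widetilde{\bfb}_{n}^{M}+\boldsymbol{\xi}^{M})$, and after discarding the indicator,
\[
\E_{\phi_*}\big[\|\widehat\btheta_{n,M}-\btheta_n^*\|_{\R^n}^2\big]\le L\,\P(\calB)+\tfrac{32}{c_{\bar\calL}^2}\Big(\E\|\widetilde{\bfb}_{n}^{M}\|_{\R^n}^2+\E\|\boldsymbol{\xi}^{M}\|_{\R^n}^2\Big).
\]

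Now I bound the three quantities, uniformly in $\phi_*$. For the noise term, each coordinate of $\boldsymbol{\xi}^{M}$ is an average of i.i.d.\ mean-zero variables ($\eta$ independent of $X$ and centered, Assumption~\ref{assump:noise}), so $\E|\boldsymbol{\xi}^M(k)|^2=\tfrac1{MN^2}\E[\innerp{R_{\psi_k}[X],\eta}_{\R^{Nd}}^2]\le \tfrac{C\sigma_\eta^2}{MN}\cdot\tfrac1N\E\|R_{\psi_k}[X]\|_{\R^{Nd}}^2=\tfrac{C\sigma_\eta^2}{MN}\innerp{\bar\calL\psi_k,\psi_k}_{L^2_\rho}\le \tfrac{C\sigma_\eta^2}{MN}$, using $\norm{\bar\calL}_{\textup{op}}\le1$ (Lemma~\ref{lemma:coercive}); summing over $k$ gives $\E\|\boldsymbol{\xi}^{M}\|^2\le C\sigma_\eta^2 n/(MN)$, the variance term. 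For the nonlocal-bias term, its mean $\widetilde{\bfb}_{n}^{\infty}$ obeys $\|\widetilde{\bfb}_{n}^{\infty}\|^2\le\norm{\bar\calL}_{\textup{op}}^2\|\phi_{*,n}^\perp\|_{L^2_\rho}^2\le Ln^{-2\beta}$ (the computation underlying \eqref{eq:btilde_bd} in Lemma~\ref{lemma:projEst}, with $\|\phi_{*,n}^\perp\|_{L^2_\rho}^2=\sum_{k>n}(\theta_k^*)^2$ controlled by Lemma~\ref{lemma:wSobolev}), while the \emph{deterministic} bound $\|R_{\psi_k}[X]\|_{\R^{Nd}}\le\sqrt N\,C_{\max}$ from Assumption~\ref{assum:bd-eigenfn} together with Cauchy--Schwarz give $\E\|\widetilde{\bfb}_{n}^{M}-\widetilde{\bfb}_{n}^{\infty}\|^2\le\tfrac{nC_{\max}^2}{MN}\E\|R_{\phi_{*,n}^\perp}[X]\|_{\R^{Nd}}^2\le\tfrac{nC_{\max}^2}{M}\|\phi_{*,n}^\perp\|_{L^2_\rho}^2\le\tfrac{LC_{\max}^2 n^{1-2\beta}}{M}$; hence $\E\|\widetilde{\bfb}_{n}^{M}\|^2\le 2Ln^{-2\beta}+2LC_{\max}^2 n^{1-2\beta}/M$, which with $n=n_M$ is $O(M^{-2\beta/(2\beta+1)})$. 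The coercivity constant $c_{\bar\calL}>0$ is essential here: it is exactly what lets the extra nonlocal bias be dominated by the hypothesis-space bias, the phenomenon flagged in Remark~\ref{rmk:loc-nonlocal}.

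For $\P(\calB)$ the two smoothness regimes diverge. If $\beta>\tfrac12$, Lemma~\ref{thm:min-eigen_B} with $\varepsilon=\tfrac34$ gives $\P(\calB)\le 2n\exp\!\big(-c_1 M/(c_2 n^2+c_3)\big)$, and since $n_M^2\asymp M^{2/(2\beta+1)}$ the exponent has order $-M^{(2\beta-1)/(2\beta+1)}\to-\infty$, faster than any power of $M$. If $\tfrac14\le\beta\le\tfrac12$, then Assumption~\ref{assum:L2L4} is in force (it holds whenever $\beta\ge\tfrac14$, by Remark~\ref{rmk:SobEmbed}), and Lemma~\ref{thm:min-eigen} with $\varepsilon=\tfrac12$ gives $\P(\calB)\le\exp(c_4 n-c_5 M/4)\le\exp\!\big(c_4 M^{1/(2\beta+1)}-c_5 M/4\big)\to0$ exponentially in $M$. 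In either case $M^{2\beta/(2\beta+1)}L\,\P(\calB)\to0$. Collecting all the estimates, $\E_{\phi_*}[\|\widehat\phi_{n_M,M}-\phi_*\|_{L^2_\rho}^2]\le C_{\textup{upper}}\,M^{-2\beta/(2\beta+1)}$ for $M$ large, with $C_{\textup{upper}}$ depending only on $\beta,L,c_{\bar\calL},C_{\max},\sigma_\eta^2,N$ and not on $\phi_*$; since $\widehat\phi_{n_M,M}$ is one admissible estimator, taking $\sup_{\phi_*\in W_\rho(\beta,L)}$, then $\inf_{\widehat\phi}$, then $\limsup_{M\to\infty}$ of $M^{2\beta/(2\beta+1)}$ times the expectation, yields \eqref{ineq:upper_main2}. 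I expect the genuine difficulty to lie not in this decomposition, which is routine once the left-tail bounds are available, but in those bounds themselves (Lemmas~\ref{thm:min-eigen_B} and~\ref{thm:min-eigen}); within the present argument the only delicate point is calibrating the taming threshold $\tfrac14 c_{\bar\calL}$ against the dimension $n_M\asymp M^{1/(2\beta+1)}$ so that $\P(\calB)$ is negligible in each regime --- the Bernstein-type bound carries an $n^{-2}$ in its exponent and hence degrades to a mere constant at $\beta=\tfrac12$, which is exactly why the PAC--Bayes bound, at the cost of the fourth-moment Assumption~\ref{assum:L2L4} (i.e.\ $\beta\ge\tfrac14$), is needed to cover the low-smoothness range.
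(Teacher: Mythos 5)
Your proposal is correct and lands on the paper's overall strategy — prove the bound for the tLSE with $n_M\asymp M^{1/(2\beta+1)}$, split off the bad event $\{\lambda_{\min}(\widebar{\bfA}_{n}^M)\le \tfrac14 c_{\bar\calL}\}$ where the crude bound $\|\btheta_n^*\|^2\le L$ is multiplied by the left-tail probabilities of Lemmas \ref{thm:min-eigen_B} (Bernstein, $\beta>1/2$) and \ref{thm:min-eigen} (PAC--Bayes, $\beta\ge 1/4$ under Assumption \ref{assum:L2L4}), and balance against the bias $Ln^{-2\beta}$ — but your treatment of the good-event error is genuinely different from the paper's and somewhat more elementary. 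The paper (Proposition \ref{thm:L2rho_upper} via Lemmas \ref{Lem:Ab_conv} and \ref{lemma:mmtBd_v2}) compares $[\widebar{\bfA}_{n}^M]^{-1}\widebar{\bfb}_{n}^M$ to the population solution $[\widebar{\bfA}_{n}^\infty]^{-1}\widebar{\bfb}_{n}^\infty$, which forces a H\"older step pairing the deterministic bound on $\|[\widebar{\bfA}_{n}^M]^{-1}\mathbf{1}_{\mathcal{A}}\|$ with \emph{fourth}-moment bounds for $\widebar{\bfb}_{n}^M-\widebar{\bfb}_{n}^\infty$ and $(\widebar{\bfA}_{n}^M-\widebar{\bfA}_{n}^\infty)\btheta_n^*$, and it handles the nonlocal bias through the population identity \eqref{Eq:Asyp_theta}--\eqref{eq:btilde_bd} of Lemma \ref{lemma:projEst}. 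You instead use the exact empirical normal-equation identity $\widebar{\bfb}_{n}^M=\widebar{\bfA}_{n}^M\btheta_n^*+\widetilde{\bfb}_{n}^M+\boldsymbol{\xi}^M$, so that on the good event the error is exactly $[\widebar{\bfA}_{n}^M]^{-1}(\widetilde{\bfb}_{n}^M+\boldsymbol{\xi}^M)$ and only \emph{second} moments of the empirical nonlocal-bias vector and of the noise cross-term are needed (with $\|\widetilde{\bfb}_{n}^\infty\|^2\le Ln^{-2\beta}$ recovering the paper's extra bias, and the fluctuation $O(Ln^{1-2\beta}C_{\max}^2/M)$ being lower order). What this buys: you bypass Lemma \ref{lemma:mmtBd_v2} and the H\"older/fourth-moment bookkeeping entirely, and in fact use only the noise variance rather than the full strength of \ref{Assump:Fourth_Noise}; what the paper's route buys is an explicit separation of the large-sample inverse problem (the population quantities and the constant $\epsilon_n$), which it uses to track constants such as $C_0$ and $C_{\textup{upper}}$ explicitly. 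Two small points to tidy: your parenthetical that Assumption \ref{assum:L2L4} "holds whenever $\beta\ge\tfrac14$" overstates Remark \ref{rmk:SobEmbed}, which needs $\rho$'s density bounded above and below away from zero — harmless here, since the theorem assumes \ref{assum:L2L4} directly in that regime; and when invoking Lemma \ref{thm:min-eigen} you should note its sample-size requirement $M\gtrsim n/\varepsilon^2$, which is satisfied for large $M$ since $n_M=o(M)$, consistent with taking the $\limsup$.
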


The upper minimax rate follows immediately from Proposition \ref{thm:L2rho_upper}, which shows that the tamed LSE $\widehat{\phi}_{n_M,M}$ achieves the rate, since 
\begin{equation*}
	\begin{aligned}
		&\limsup_{M\to\infty} \inf_{\widehat{\phi}} \sup_{\phi_*\in H_{\rho}^{\beta}(L) }   \E_{\phi_*}[ M^{\frac{2\beta}{2\beta+1}} \| \widehat{\phi}-\phi_* \|_{L^2_\rho} ^2 ] \\
 \leq&  \limsup_{M\to\infty}  \sup_{\phi_*\in H_{\rho}^{\beta}(L)
 } 
   \E_{\phi_*}[ M^{\frac{2\beta}{2\beta+1}} \| \widehat{\phi}_{n_M,M}-\phi_* \|_{L^2_\rho} ^2 ]. 
	\end{aligned}
\end{equation*}
Thus, we focus on proving Proposition \ref{thm:L2rho_upper} in this section.

\begin{proposition}[Convergence rate for tLSE]\label{thm:L2rho_upper}
 Suppose Assumption {\rm\ref{assump:noise-X}} and Assumption {\rm\ref{assump:noise} \ref{Assump:Fourth_Noise}} on the model hold, and Assumption {\rm\ref{assum:bd-eigenfn}} on the basis functions hold. Then, the tLSE in \eqref{eq:tame-lse} with $n_M=\floor{( \frac{2\beta (Lc_{\bar\calL}^2{/ \pi^{\beta}}+ 2 )}{C_0 c_{\bar\calL}^4} M)^{\frac{1}{2\beta+1}}}$  converges at the rate $M^{-\frac{2\beta}{2\beta+1}}$ for any $\beta> \frac 12$, i.e., 
\begin{equation}\label{ineq:upper_main}
\limsup_{M\to\infty} \sup_{\phi_*\in H_{\rho}^{\beta}(L)}   \E_{\phi_*}\left[ M^{\frac{2\beta}{2\beta+1}} \| \widehat{\phi}_{n_M,M}-\phi_* \|_{L^2_\rho} ^2 \right] \leq C_{\textup{upper}} :=2C_{\beta,L}  (C_0 c_{\bar\calL}^{-2})^\frac{2\beta}{2\beta+1}\,. 
\end{equation}
Furthermore, the inequality \eqref{ineq:upper_main} holds for all $\beta\geq \frac 14$ if Assumption {\rm \ref{assum:SoboEmb}} is also satisfied, for example, when $H_{\rho}^\beta= H_{0,\rho}^\beta$. 
\end{proposition}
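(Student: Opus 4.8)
The plan is to run a bias--variance decomposition for the tLSE, using the coercivity condition to control the extra nonlocal bias term from Lemma~\ref{lemma:projEst} and using the left-tail probability bounds on $\lambda_{\min}(\widebar{\bfA}_{n}^M)$ to show that the ``bad event'' $\{\lambda_{\min}(\widebar{\bfA}_{n}^M)\le \tfrac14 c_{\bar\calL}\}$ contributes a negligible term. First I would decompose $\widehat\phi_{n,M}-\phi_*$ into three pieces: the projection bias $\phi_{*,n}^\perp=\sum_{l>n}\theta_l^*\psi_l$ with $\|\phi_{*,n}^\perp\|_{L^2_\rho}^2\le Ln^{-2\beta}$ by Lemma~\ref{lemma:wSobolev}; the deterministic ``nonlocal bias'' $[\widebar{\bfA}_{n}^\infty]^{-1}\widetilde{\bfb}_{n}^\infty$, bounded in $\R^n$ by $c_{\bar\calL}^{-2}\sum_{l>n}(\theta_l^*)^2\le c_{\bar\calL}^{-2}Ln^{-2\beta}$ via \eqref{eq:btilde_bd}; and the stochastic fluctuation $\widehat\btheta_{n,M}-\btheta_n^*$ on the good event, plus the error $\|\btheta_n^*\|$ incurred on the bad event where the tLSE is zero. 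Since $\{\psi_k\}$ is orthonormal, $\|\widehat\phi_{n,M}-\phi_*\|_{L^2_\rho}^2$ equals the sum of the $\ell^2(\R^n)$ norm of the coefficient error plus $\|\phi_{*,n}^\perp\|_{L^2_\rho}^2$.

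Next I would handle the variance on the good event $G:=\{\lambda_{\min}(\widebar{\bfA}_{n}^M)>\tfrac14 c_{\bar\calL}\}$. On $G$ we have $\widehat\btheta_{n,M}=[\widebar{\bfA}_{n}^M]^{-1}\widebar{\bfb}_{n}^M$ and $\|[\widebar{\bfA}_{n}^M]^{-1}\|\le 4c_{\bar\calL}^{-1}$, so writing $\widebar{\bfb}_{n}^M=\widebar{\bfA}_{n}^M\btheta_n^*+(\widebar{\bfb}_{n}^M-\widebar{\bfA}_{n}^M\btheta_n^*)$ and noting that the ``residual'' $\widebar{\bfb}_{n}^M-\widebar{\bfA}_{n}^M\btheta_n^*$ has entries $\frac1{MN}\sum_m\langle R_{\psi_k}[X^m], Y^m-R_{\phi_{*,n}}[X^m]\rangle$ — a sum of i.i.d.\ terms with mean $\widetilde{\bfb}_{n}^\infty(k)$ coming from the $\eta$-part and the truncation-tail part — I would bound $\E[\|\widehat\btheta_{n,M}-\btheta_n^*\|^2 \mathbf1_G]$ by $16 c_{\bar\calL}^{-2}\E\|\widebar{\bfb}_{n}^M-\widebar{\bfA}_{n}^M\btheta_n^*\|^2 + 2\|[\widebar{\bfA}_{n}^\infty]^{-1}\widetilde{\bfb}_{n}^\infty\|^2$ (roughly). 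The i.i.d.\ sum has variance $\le C n/(MN)\cdot(\sigma_\eta^2 + \text{tail})$ using Assumption~\ref{Assump:Fourth_Noise}, the uniform bound $C_{\max}$ on basis functions (Assumption~\ref{assum:bd-eigenfn}), and $\|R_{\psi_k}[X]\|\le C_{\max}$-type bounds; summing over $k$ gives a variance of order $n/M$, contributing $C_0 c_{\bar\calL}^{-2} n/M$. Balancing $n/M$ against the bias $c_{\bar\calL}^{-2}Ln^{-2\beta}$ gives exactly the stated choice $n_M\asymp (\tfrac{2\beta(Lc_{\bar\calL}^2+2)}{C_0 c_{\bar\calL}^4}M)^{1/(2\beta+1)}$ and the rate $M^{-2\beta/(2\beta+1)}$.

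Finally, for the bad event $G^c$, the tLSE is zero so the coefficient error is $\btheta_n^*$ with $\|\btheta_n^*\|^2\le \|\btheta^*\|_{\ell^2}^2\le L$ by Lemma~\ref{lemma:wSobolev}; hence this contributes $\le L\cdot\P(G^c)$ to $\E\|\widehat\phi_{n,M}-\phi_*\|^2$ after also accounting for the cross term. Here I invoke Lemma~\ref{thm:min-eigen_B}: with $\varepsilon=\tfrac12$ (so $(1-\varepsilon)c_{\bar\calL}=\tfrac12 c_{\bar\calL}>\tfrac14 c_{\bar\calL}$), $\P(G^c)\le 2n\exp(-c_1 M/(4c_2 n^2 + 2c_3))$. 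With $n=n_M\asymp M^{1/(2\beta+1)}$ and $\beta>1/2$, we have $n_M^2 = o(M)$, so this probability decays faster than any polynomial in $M$, hence $M^{2\beta/(2\beta+1)}\cdot L\cdot\P(G^c)\to 0$, which is why the bad event does not disturb the bias--variance tradeoff. For the case $\tfrac14\le\beta\le\tfrac12$ where $n_M^2$ may be comparable to or larger than $M$, I would instead use Lemma~\ref{thm:min-eigen} (the PAC--Bayes bound, valid under Assumption~\ref{assum:L2L4}): $\P(G^c)\le \exp(c_4 n_M - c_5 M/4)$, and since $n_M\asymp M^{1/(2\beta+1)} = o(M)$ always, $c_4 n_M - c_5 M/4\to -\infty$, again killing the bad-event term. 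The main obstacle I anticipate is the careful bookkeeping in the variance term: precisely isolating the mean of the residual vector (which is exactly $\widetilde{\bfb}_{n}^\infty$, the source of the nonlocal bias that \eqref{eq:btilde_bd} controls) from its fluctuation, and verifying that the truncation to $[0,1]^d$ combined with Assumption~\ref{Assump:Fourth_Noise} yields the clean $O(n/M)$ variance bound with a constant $C_0$ independent of $n$ and $M$ — this is where the uniform basis bound and the coercivity constant enter multiplicatively and must be tracked to produce the explicit $C_{\textup{upper}}$.
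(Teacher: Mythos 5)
Your proposal is correct and follows essentially the same route as the paper's proof: an $\ell^2$ bias--variance decomposition over the orthonormal basis, an $O(n/M)$ variance bound on the good event using the deterministic bound $\|[\widebar{\bfA}_{n}^M]^{-1}\|\le 4c_{\bar\calL}^{-1}$ together with the bounded-basis and noise-moment assumptions, the nonlocal bias controlled through \eqref{eq:btilde_bd}, and the bad event killed by Lemma \ref{thm:min-eigen_B} for $\beta>\tfrac12$ and by the PAC--Bayes bound of Lemma \ref{thm:min-eigen} for $\tfrac14\le\beta\le\tfrac12$, followed by the same balancing $n_M\asymp M^{1/(2\beta+1)}$. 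The only difference is cosmetic bookkeeping in the variance term (your residual decomposition $\widebar{\bfb}_{n}^M-\widebar{\bfA}_{n}^M\btheta_n^*$ with second moments, versus the paper's split into $[\widebar{\bfA}_{n}^{M}]^{-1}(\widebar{\bfb}_{n}^{M}-\widebar{\bfb}_{n}^{\infty})$ and $([\widebar{\bfA}_{n}^{M}]^{-1}-[\widebar{\bfA}_{n}^{\infty}]^{-1})\widebar{\bfb}_{n}^{\infty}$ bounded via H\"older and the fourth-moment Lemma \ref{lemma:mmtBd_v2}), which affects only the explicit constants, not the rate.
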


The constants in the theorem are as follows: $c_{\bar\calL}\geq \frac{N-1}{N^2}$ is the coercivity constant defined in \eqref{eq:lambda-min}, $C_{\beta,L}=\frac{2\beta+1}{2\beta} [2\beta (L{/ \pi^{\beta}}+ 2 c_{\bar\calL}^{-2} )]^\frac{1}{2\beta+1}$,  and $C_0$ can be found in Lemma \ref{Lem:Ab_conv}.  In particular, \eqref{ineq:upper_main} holds for $H_{0,\rho}^{\beta}(L)$ with $\beta\geq 1/4$ since Assumptions {\rm\ref{assump:noise-X}}, {\rm\ref{assump:noise} \ref{Assump:Fourth_Noise}}, {\rm\ref{assum:bd-eigenfn}} and {\rm \ref{assum:SoboEmb}} can be verified.

\begin{remark}[The case of $\beta\leq 1/2$ and Sobolev embedding in H\"older space]\label{Rmk:SobEmbH} 
    The case $\beta\leq 1/2$  holds practical significance, particularly because the fractional Sobolev class $H_{\rho}^{\beta}(L)$ can contain discontinuous interaction functions such as piecewise constants. On the other hand, when $\beta>1/2$, the functions in $H_{\rho}^{\beta}(L)$ are continuous when the density of $\rho$ is both lower-bounded away from zero and upper-bounded. This follows from the fact that $H_{\rho}^{\beta}(L)\simeq W^{\beta}_{\rho} \simeq W^{\beta}$ and by the Sobolev embedding that $W^{\beta}$ embeds continuously in $\calC^{\beta-\frac 12}$ (see e.g., {\rm \cite[Theorem 8.2]{Hitchhiker2012}}) as discussed in Appendix \ref{sec:append_Sob}. Therefore, to cover discontinuous functions, it is necessary to consider the case $\beta\leq 1/2$.  
\end{remark}

\begin{proof}[Proof of Proposition \ref{thm:L2rho_upper}]
The proof follows the standard technique of bias-variance tradeoff, except an extra term bounding the probability of the set where the tLSE is zero. The bound follows from the left tail probability $\P\left\{ \lambda_{\min}(\bar{\bfA}_{n}^{M}) \leq \frac 1 4 c_{\bar\calL} \right\} \leq G_{L,c_{\bar\calL}}(n,M) $, which we establish in Lemma \ref{Lem:Ab_conv}. The term $G_{L,c_{\bar\calL}}(n,M)$ enjoys an exponential decay since it comes from the concentration inequality of the smallest eigenvalue of the normal matrix. \\

Let $\phi_*=\sum_{k=1}^\infty \theta_k^* \psi_k$ and let $\btheta_{n}^*=(\theta_1^*,\ldots, \theta_n^*)$.  
We start from the bias-variance decomposition: 
\begin{align*}
	 \E_{\phi_*}[ \| \widehat{\phi}_{n,M}-\phi_* \|_{L^2_\rho}^2 ] 
	&= \underbrace{\E_{\phi_*}[\|\widehat \btheta_{n,M}- \btheta^*_{n}\|^2_{\R^n}]}_{\text{variance term}} + \underbrace{\sum\nolimits_{k={n}+1}^\infty |\theta^*_k|^2}_{\text{bias term}}\,.
\end{align*}
The variance term is controlled by, as we detailed in Lemma {\rm\ref{Lem:Ab_conv}} (see below), 
\begin{align*}
    \E_{\phi_*}[\|\widehat \btheta_{n,M}- \btheta^*_{n}\|^2_{\R^n}]  \leq& \underbrace{C_0  c_{\bar\calL}^{-2} \frac{n}{M}}_{\text{`tamed' variance term}}+ \underbrace{G_{L,c_{\bar\calL}}(n,M)}_{\text{concentration term}}  + \underbrace{2c_{\bar\calL}^{-2}  \sum\nolimits_{l= n+1}^\infty |\theta_l^*|^2}_{\text{nonlocal bias term}} \, ,
\end{align*}
where the `tamed' variance term of order $\frac{n}{M}$ comes from the well-conditioned parts of the tLSE, a concentration term $G_{L,c_{\bar\calL}}(n,M)$ comes from the left tail probability for tLSE to be zero, and the nonlocal bias term $2 c_{\bar\calL}^{-2}  \sum_{l= n+1}^\infty |\theta_l^*|^2 $ originates from the nonlocal dependence in Eq. \eqref{eq:btilde_bd}. Here, the universal positive constants are $C_0=\sqrt{798}C_{\max}^4(\frac{C_\eta {\pi^{\beta}}}{C_{\max}^4L N^2} + L{/ \pi^{\beta}})$, and $C_{\max} = \sup_{k\geq 1}\|\psi_k\|_\infty$.

The bias term is bounded above by the smoothness of the true kernel in $H_{\rho}^{\beta}(L)$. That is, by Lemma \ref{lemma:wSobolev} we have $\sum\nolimits_{k={n}+1}^\infty |\theta^*_k|^2\leq L n^{-2\beta}{/ \pi^{\beta}}$. 
Combining these three estimates, we have 
\begin{align}
  \E_{\phi_*}[ \| \widehat{\phi}_{n,M}-\phi_* \|_{L^2_\rho} ^2 ]
 &\leq (L+ 2 c_{\bar\calL}^{-2} ) n^{-2\beta}+  C_0  c_{\bar\calL}^{-2} \frac n M+ G_{L,c_{\bar\calL}}(n,M) \label{Eq:Trade-off}\\
	&=: g(n)+ G_{L,c_{\bar\calL}}(n,M) \,.\nonumber 
\end{align}

Minimizing the trade-off function $g(n) = \bar L n^{-2\beta}+ C_0  c_{\bar\calL}^{-2} n M^{-1}$ with $\bar L = L{/ \pi^{\beta}}+ 2 c_{\bar\calL}^{-2} $, we obtain the optimal dimension of hypothesis space  $n_M=\floor{( \frac{c_{\bar\calL}^2\tilde L}{C_0 } M)^{\frac{1}{2\beta+1}}}$, and $g(n_M) \leq 2{\bar  L}^{\frac{1}{2\beta+1}}  (C_0 c_{\bar\calL}^{-2})^\frac{2\beta}{2\beta+1}M^{-\frac{2\beta}{2\beta+1}} $. 

When $\beta> \frac 12$, we have $M\gg n_M^2$ as $M\to \infty$ and the following estimate for either choice of $G_{L,c_{\bar\calL}}(n,M)=\frac{L^2}{\pi^{2\beta}}\calE_{c_{\bar\calL}}^{{\rm Che}}(n,M)$ or $\frac{L^2}{\pi^{2\beta}}\calE_{c_{\bar\calL}}^{{\rm Ber}}(n,M)$ defined in \eqref{Def:G_Lc}: 
\begin{align}
	G_{L,c_{\bar\calL}}(n_M,M) \leq 2{\bar  L}^{\frac{1}{2\beta+1}}  (C_0 c_{\bar\calL}^{-2})^\frac{2\beta}{2\beta+1}M^{-\frac{2\beta}{2\beta+1}}. \label{Eq:Trade-off_G}
\end{align}

Moreover, if Assumption \ref{assum:SoboEmb} holds with $\beta\geq 1/4$, $G_{L,c_{\bar\calL}}(n,M)=\frac{L^2}{\pi^{2\beta}}\calE_{c_{\bar\calL}}^{{\rm Che}}(n,M)$, the inequalities \eqref{Eq:Trade-off} and  \eqref{Eq:Trade-off_G}  remain valid if $M$ is large enough.
Hence, in either case, with $C_{\beta,L,N, \bar\calL}=4{\bar  L}^{\frac{1}{2\beta+1}}  (C_0 c_{\bar\calL}^{-2})^\frac{2\beta}{2\beta+1}$, we have 
\begin{align*}
	\E_{\phi_*}[ \| \widehat{\phi}_{n,M}-\phi_*\|_{L^2_\rho} ^2 ] 
	&\leq C_{\beta,L,N, \bar\calL} M^{-\frac{2\beta}{2\beta+1}} 
\end{align*}
if $M$ is large enough, which implies \eqref{ineq:upper_main}.  
\end{proof}

\begin{remark} The left-tail probability bound via PAC-Bayes inequality can also yield the above rate for all $\beta\geq 1/4$ under an additional fourth-moment condition 
(Assumption {\rm \ref{assum:L2L4}}). The proof is the same as above except using  $G_{L,c_{\bar\calL}}(n,M)$ defined in \eqref{Def:G_Lc2}. 
\end{remark}

\smallskip

Recall $\widebar{\bfA}_{n}^{\infty} = \E[\widebar{\bfA}_{n}^{M}]=\lim_{M\to \infty}\widebar{\bfA}_{n}^{M}$ and $\widebar{\bfb}_{n}^{\infty} = \E[\widebar{\bfb}_{n}^{M}]=\lim_{M\to \infty}\widebar{\bfb}_{n}^{M}$ as defined in Lemma \ref{lemma:projEst}. Then, we can estimate the variance as follows.

\begin{lemma}[Bound for variance]\label{Lem:Ab_conv}
Under Assumption {\rm\ref{assum:bd-eigenfn}} on the basis functions with $C_{\max} = \sup\limits_{k\geq 1}\|\psi_k\|_\infty$, the following bound for the tamed LSE in Definition {\rm\ref{def:tlse}} satisfies  
\begin{equation}\label{eq:errorL2E}
	\E_{\phi_*}\Big[\Big\|\widehat \btheta_{n,M}- \btheta_{n}^*\Big\|^2_{\R^n}\Big] \leq C_0  c_{\bar\calL}^{-2} \frac n M + 2\epsilon_n^* +  G_{L,c_{\bar\calL}}(n,M) \,,
\end{equation}
where $C_0=2^6\sqrt{3}C_{\max}^4C_{\beta}^2 (\frac{C_\eta^{1/2} {\pi^{2\beta}}}{C_{\max}^4C_{\beta}^2L^2N} + L^2{/ \pi^{2\beta}})$, $\epsilon_n^*= c_{\bar\calL}^{-2}  \sum_{l= n+1}^\infty |\theta_l^*|^2$  and 
\begin{align}\label{Def:G_Lc}
	G_{L,c_{\bar\calL}}(n,M)= \frac{L^2}{\pi^{2\beta}}\calE_{c_{\bar\calL}}^{{\rm Che}}(n,M) \quad\text{ or }\quad \frac{L^2}{\pi^{2\beta}}\calE_{c_{\bar\calL}}^{{\rm Ber}}(n,M) 
\end{align}
with $\calE_{c_{\bar\calL}}^{{\rm Che}}(n,M)$ and $\calE_{c_{\bar\calL}}^{{\rm Ber}}(n,M)$ defined in \eqref{Eq:Low_Min_est_C2} and \eqref{Eq:Low_Min_est_B2}, respectively. Moreover, if {Assumption {\rm\ref{assum:L2L4}}} is also satisfied,  the inequality \eqref{eq:errorL2E} holds with 
\begin{align}\label{Def:G_Lc2}
	G_{L,c_{\bar\calL}}(n,M)=  \frac{L^2}{\pi^{2\beta}}\calE_{c_{\bar\calL}}^{{\rm PAC}}(n,M)\,,
\end{align}
where $\calE_{c_{\bar\calL}}^{{\rm PAC}}(n,M)$ is defined in \eqref{Eq:Low_Min_est2}.
\end{lemma}

\begin{proof}
Recall $\widehat \btheta_{n,M} = [\widebar{\bfA}_{n}^M]^{-1}\widebar{\bfb}_{n}^M\mathbf{1}_{\{ \lambda_{\min}(\widebar{\bfA}_{n}^M)> \frac{1}{4} c_{\bar\calL}\} }$ in Eq. \eqref{eq:tame-lse} and 
$$\btheta^*_n= [\widebar{\bfA}_{n}^\infty]^{-1}\widebar{\bfb}_{n}^\infty -\mathbf{v}$$
in Eq. \eqref{Eq:Asyp_theta} with $\mathbf{v} =  [\widebar{\bfA}_{n}^\infty]^{-1}\widetilde{\bfb}_{n}^\infty$ satisfying $\|\mathbf{v}\|^2\leq \epsilon_n^*= c_{\bar\calL}^{-2}  \sum_{l= n+1}^\infty |\theta_l^*|^2$ by \eqref{eq:btilde_bd}. 
For simplicity of notation, denote $\mathcal{A} := \{ \lambda_{\min}(\widebar{\bfA}_{n}^M)>  c_{\bar\calL}/4\}.$  
With $\mathcal{A}^c$ denoting the complement of the set $\mathcal{A}$, we have 
\begin{align*}
	\|\widehat \btheta_{n,M}- \btheta_{n}^*\|_{\R^{n}}^2 
	   &= \| [\widebar{\bfA}_{n}^M]^{-1} \widebar{\bfb}_{n}^M   - \btheta_{n}^*\|_{\R^{n}}^2\mathbf{1}_{\mathcal{A}} + \|\btheta_{n}^*\|_{\R^{n}}^2 \mathbf{1}_{\mathcal{A}^c} \\
	& \leq 2 \big( \| [\widebar{\bfA}_{n}^M]^{-1} \widebar{\bfb}_{n}^M  -[\widebar{\bfA}_{n}^\infty]^{-1}\widebar{\bfb}_{n}^\infty\|_{\R^{n}}^2  +\epsilon_n^* \big) \mathbf{1}_{\mathcal{A}} + \|\btheta_{n}^*\|_{\R^{n}}^2 \mathbf{1}_{\mathcal{A}^c} \\
	& \leq  4 \big(  \| [\widebar{\bfA}_{n}^{M}]^{-1} (\widebar{\bfb}_{n}^{M} - \widebar{\bfb}_{n}^{\infty})\|^2_{\R^n}  + \| ([\widebar{\bfA}_{n}^{M}]^{-1}- [\widebar{\bfA}_{n}^{\infty}]^{-1})\widebar{\bfb}_{n}^{\infty}\|^2_{\R^n} \big) \b1_{\mathcal{A}}  \\
	&\qquad +2 \epsilon_n^* +\|\btheta_{n}^*\|^2_{\R^n}  \mathbf{1}_{\mathcal{A}^c}. 
\end{align*}
Taking expectation, we get 
\begin{align*}
	\E_{\phi_*}\left[\|\widehat \btheta_{n}^M- \btheta_{n}^*\|^2_{\R^n}\right] &\leq 4\E_{\phi_*}\big[\| [\widebar{\bfA}_{n}^{M}]^{-1} (\widebar{\bfb}_{n}^{M} - \widebar{\bfb}_{n}^{\infty})\|^2_{\R^n} \b1_{\mathcal{A}} \big] \\
	&+ 4\E_{\phi_*}\big[\| ([\widebar{\bfA}_{n}^{M}]^{-1}- [\widebar{\bfA}_{n}^{\infty}]^{-1})\widebar{\bfb}_{n}^{\infty}\|^2_{\R^n} \b1_{\mathcal{A}} \big] +\|\btheta_{n}^*\|^2_{\R^n} \P\{\mathcal{A}^c\} + 2\epsilon_n^*\,. 
\end{align*}
The first three terms on the right-hand side are bounded as follows. 
Applying H\"older inequality and Lemma \ref{lemma:mmtBd_v2}, we have 
\begin{align*}
	\E_{\phi_*}\big[\| [\widebar{\bfA}_{n}^{M}]^{-1} (\widebar{\bfb}_{n}^{M} - \widebar{\bfb}_{n}^{\infty})\|_{\R^{n}}^2 \b1_{\mathcal{A}}\big] 
	\leq& (\E_{\phi_*}\|[\widebar{\bfA}_{n}^{M}]^{-1}\b1_{\mathcal{A}} \|^4)^{1/2} (\E_{\phi_*} [\|\widebar{\bfb}_{n}^{M} - \widebar{\bfb}_{n}^{\infty}\|^4_{\R^n}])^{1/2} \\
 	\leq& 16 c_{\bar\calL}^{-2} C_b\frac n M.  
\end{align*}
Similarly, using the facts $\big([\widebar{\bfA}_{n}^{\infty}]^{-1}-[\widebar{\bfA}_{n}^{M}]^{-1}\big)\widebar{\bfb}_{n}^{\infty} = [\widebar{\bfA}_{n}^{M}]^{-1} (\widebar{\bfA}_{n}^{M}- \widebar{\bfA}_{n}^{\infty})[\widebar{\bfA}_{n}^{\infty}]^{-1}\widebar{\bfb}_{n}^{\infty}$ on the set $\mathcal{A}$ and $\btheta^*_n= [\widebar{\bfA}_{n}^\infty]^{-1}\widebar{\bfb}_{n}^\infty$, we bound the second term as 
\begin{align*}
	 \E_{\phi_*} \big[\| ([\widebar{\bfA}_{n}^{M}]^{-1}-[\widebar{\bfA}_{n}^{\infty}]^{-1})\widebar{\bfb}_{n}^{\infty} \b1_{\mathcal{A}}\|_{\R^{n}}^2  \big] 
	\leq&  (\E_{\phi_*} \|[\widebar{\bfA}_{n}^{M}]^{-1} \b1_{\mathcal{A}} \|^4)^{\frac 12} (\E_{\phi_*}[\|(\widebar{\bfA}_{n}^{M}- \widebar{\bfA}_{n}^{\infty})\btheta^*_n \|_{\R^n}^4])^{\frac 12} \\
	 \leq& 16 c_{\bar\calL}^{-2} C_A\frac n M\,. 
\end{align*}
Following the Chernoff left tail probability \eqref{Eq:Low_Min_est_C2}, the Bernstein left tail probability \eqref{Eq:Low_Min_est_B2} and the fact that $\btheta_{n}^*$ satisfying $\sum\nolimits_{k=1}^\infty k^{2\beta}(\theta_k^*)^2 \leq \frac{L^2}{\pi^{2\beta}}$, we  have 
\begin{align}\label{Eq:inq_theta_star_A_c}
	\|\btheta_{n}^*\|^2_{\R^n} \P\{\mathcal{A}^c\}\leq \left( \sum\nolimits_{k=1}^\infty k^{2\beta}(\theta_k^*)^2\right) \P\{\mathcal{A}^c\} \leq   G_{L,c_{\bar\calL}}(n,M)\,.
\end{align} 
Combining the preceding three estimates, we have 
\begin{align*}
	\E_{\phi_*}\left[\|\widehat \btheta_{n,M}- \btheta_{n}^*\|^2_{\R^n}\right] 
	&\leq 64 c_{\bar\calL}^{-2} (C_A+C_b) \frac n M + 2\epsilon_n^*+G_{L,c_{\bar\calL}}(n,M)\,.
\end{align*}
Recall that $C_A$ and $C_b$ can be found in \eqref{Ineq:4MofA} and \eqref{Ineq:4Mofb}. 
Hence, {$2(C_A+C_b) \leq  2^{6}\sqrt{3}C_{\max}^4C_{\beta}^2 (L^2{/ \pi^{2\beta}}+\frac{C_\eta^{1/2}{\pi^{2\beta}}}{C_{\max}^4C_{\beta}^2L^2N})=:C_0$}, and we conclude the proof of \eqref{eq:errorL2E} by applying Lemmas \ref{thm:min-eigen_C} and \ref{thm:min-eigen_B}.  
The bound involving \eqref{Def:G_Lc2} follows similarly by employing \eqref{Eq:Low_Min_est2} from Lemma \ref{thm:min-eigen} to  \eqref{Eq:inq_theta_star_A_c}. 
\end{proof}

\smallskip

The succeeding lemma establishes the fourth-moment bounds for the normal vectors. Its proof is included in Appendix \ref{sec:proof_upperRate}.  
\begin{lemma}[Fourth-moment bounds for the normal vectors]\label{lemma:mmtBd_v2}
Let $\widebar{\bfA}_{n}^\infty = \E_{\phi_*}[\widebar{\bfA}_{n}^{M}] $ and $\widebar{\bfb}_{n}^\infty = \E_{\phi_*}[\widebar{\bfb}_{n}^{M}]$, where $\widebar{\bfA}_{n}^{M}$ and $\widebar{\bfb}_{n}^{M}$ are defined in \eqref{eq:Ab}. Let $\btheta_n^*=(\theta_1^*,\ldots,\theta_n^*)$ be the first $n$ coefficients of the true function $\phi_*$. 
\begin{itemize}
\item 
When $\beta>1/2$, under Assumption {\rm\ref{assum:bd-eigenfn}} on the basis functions, we have
\begin{subnumcases}{\label{Ineq:4MofAb}}
    \Big(\E_{\phi_*}\big[\| ( \widebar{\bfA}_{n}^{M}-\widebar{\bfA}_{n}^\infty)\btheta^*_n\|_{\R^n}^4\big]\Big)^{\frac 12}  \leq C_A \frac n M \,; \label{Ineq:4MofA}\\ 
		\Big(\E_{\phi_*}\big[\| \widebar{\bfb}_{n}^{M}-\widebar{\bfb}_{n}^\infty\|_{\R^n}^4\big]\Big)^{\frac 12} \leq C_b \frac n M \,,\label{Ineq:4Mofb}
\end{subnumcases}
where the constants $C_A = \sqrt{96}C_{\beta}^2C_{\max}^4 L^2{/ \pi^{2\beta}}$ and $C_b =  \sqrt{798}C_{\max}^2(C_{\beta}^4C_{\max}^4L^4{/ \pi^{4\beta}} +\frac{1}{N^2}C_{\eta} )^{1/2} $ with $C_{\max} = \sup_{k\geq 1}\|\psi_k\|_\infty$, $C_{\beta}=(\sum_{k=1}^\infty k^{-2\beta})^{1/2}$ are independent of $n$ and $M$. 

\item When $\beta\geq 1/4$, under Assumption {\rm\ref{assum:bd-eigenfn}} and Sobolev embedding Assumption {\rm\ref{Assu:suff_L4}}, we have the estimates in \eqref{Ineq:4MofAb} hold with $C_A = \sqrt{96\calK_{\beta}}C_{\max}^2 L^2$, $C_b = \sqrt{798}C_{\max}^2(\calK_{\beta} L^4 +\frac{1}{N^2}C_{\eta} )^{1/2}$ and $\calK_{\beta}$ is the Sobolev embedding constant in Eq. \eqref{Assu:suff_L4}.
\end{itemize}
\end{lemma}

\subsection{Upper minimax rate: non-uniformly bounded basis functions}\label{Sec:UpbdMini_NonUnif}

The assumption of a uniformly bounded basis may not hold in general. For example, the basis functions $\{ \psi_k(x) = \frac{\sqrt{2}\sin(k\pi x)} {\sqrt{\rho'(x)}} \}_{k= 1}^{\infty}$ are not uniformly bounded when the continuous density $\rho'$ does not have a positive lower bound, as in the example in Appendix \ref{sec:unif-X}.

To consider the case of a non-uniformly bounded basis, we introduce the following mild growth condition on the basis functions. 
\begin{assumption}[Mild growth condition]\label{assum:L4-eigenfn} 
The complete orthonormal basis functions $\{\psi_k\}_{k= 1}^{\infty}$ satisfy the uniform $L^4_{\rho}([0,1])$-bound condition:
\begin{equation}\label{cond:L4-eigenfn}
	\sup_{k\geq 1}\|\psi_k\|_{L^4_{\rho}}\leq \widetilde{L}<\infty \,, 
\end{equation} 
 and the $\delta$-power growth condition:
\begin{equation}\label{cond:sqrt_n}
	\sup_{1\leq k\leq n} \| \psi_k\|_{\infty} \leq C_{\delta} n^{\delta}, \forall n\geq 1,  
\end{equation}
where $\delta\geq 0$ and $C_{\delta}$ is a positive constant depending only on $\delta$. 
\end{assumption} 

Assumption \ref{assum:L4-eigenfn} holds for the basis functions $\psi_k(x)=e_k(x)/\sqrt{\rho'(x)}$ with $e_k(x)=\sqrt{2}\sin(k\pi x)$ and $\rho'(x)=\frac 32\sqrt{1-x}$ on $[0,1]$. Clearly, this basis $\{\psi_k(x)\}_{k=1}^{\infty}$ is orthonormal and complete in $L^2_\rho$. Also, the $\delta$-power growth condition \eqref{cond:sqrt_n} holds with $\delta=\frac{1}{4}$ since for each $k\geq 1$, the function $|\psi_k(x)|=\frac{2}{\sqrt{3}}\frac{|\sin(k\pi x)|}{(1-x)^{1/4}}$ takes the global maximum value at $x=1-\frac{1}{2k}$ or $x=1-\frac{3}{2k}$. Moreover, for each $k\geq 1$
\begin{align*}
	\|\psi_k\|_{L^4_{\rho}}^4 = \int_0^1 \bigg| \frac{\sqrt{2}\sin(k\pi x)}{\sqrt{\rho'(x)}}\bigg|^4 \rho'(x) dx \leq \frac 23 \int_0^1 \frac{1}{\sqrt{1-x}} dx =\frac 43\,.
\end{align*}

\begin{remark}
	The $\delta$-power growth condition {\rm \eqref{cond:sqrt_n}} is inspired by the following assumption (see, for example, {\rm \cite[Section 7.4.2]{Pascal2007}})
\begin{equation}\label{assum:sqrt_n_R1}
	\left\| \sum_{k=1}^{n} \theta_k \psi_k \right\|_{\infty} \leq \sqrt{n} \max_{k=1,...,n}| \theta_k|\quad \text{for all } \btheta=(\theta_k)\in \ell^2\,.
\end{equation} 
Clearly, this condition implies the $\delta$-growth condition with $\delta=1/2$, which follows by taking $\btheta=e_k:=(0,\cdots,1,\cdots,0)\in \R^n$ for all $1\leq k\leq n$ in Eq. {\rm \eqref{assum:sqrt_n_R1}}. 
\end{remark}

Our main result of this section is the forthcoming theorem, which shows that the tamed LSE estimator achieves the optimal upper minimax convergence rate for the case with non-uniformly bounded basis functions.
\begin{theorem}[upper minimax rate: non-uniformly bounded basis]\label{cor:L2rho_upper_case2}
Suppose that Assumption {\rm\ref{assump:noise-X}},  Assumption {\rm\ref{assump:noise} \ref{Assump:Fourth_Noise}} and Assumption {\rm \ref{assum:bd-eigenfn}} on the model hold. Let Assumption {\rm\ref{assum:L4-eigenfn} on the basis hold with some $\delta\geq 0$}. If  $\beta> \delta +\frac12$ then
\begin{equation}\label{ineq:upper_case2}
\limsup_{M\to\infty} \inf_{\widehat{\phi}} \sup_{\phi_*\in H_{\rho}^{\beta}(L)}    \E_{\phi_*}\left[ M^{\frac{2\beta}{2\beta+1}} \| \widehat{\phi}-\phi_* \|_{L^2_\rho} ^2 \right] \leq C_{\textup{upper}}\,,
\end{equation}
where $C_{\textup{upper}}>0$ is a constant independent of $M$ and the infimum is among all estimators $\widehat \phi_M$ inferred from data $\{(X^m,Y^m)\}_{m=1}^M$. 
\end{theorem}
Compared to the one in Eq. \eqref{ineq:upper_main2}, the positive constant $C_{\textup{upper}}$ in Eq. \eqref{ineq:upper_case2} additionally depends on $\delta$ and $\widetilde{L}$ introduced in Assumption \ref{assum:L4-eigenfn}.  
The proof of Theorem {\rm \ref{cor:L2rho_upper_case2}} is similar to Theorem {\rm \ref{cor:L2rho_upper}}. We postpone the detailed proof with the precise value of $C_{\textup{upper}}$ in Appendix {\rm \ref{subsec:case2}}.

\subsection{Left tail probability of the smallest eigenvalue}\label{sec:prob_eig_bd}

Recall that the smallest eigenvalue of $\bar{\bfA}_{n}^{M}$ is defined as
\begin{align*}
	\lambda_{\min}(\bar{\bfA}_{n}^{M})&=\inf_{\btheta\in S^{n-1}}\btheta^\top \bar{\bfA}_{n}^{M}\btheta
	 = \inf_{\btheta\in S^{n-1}}\frac{1}{MN}\sum\nolimits_{m=1}^M \|R_{\phi_{\btheta}}[X^m]\|_{\R^{Nd}}^2, 
\end{align*}
where $\phi_{\btheta}=\sum_{k=1}^n \theta_k\psi_k$ and $\btheta=(\theta_1,\cdots,\theta_n)\in S^{n-1}$, i.e., $\|\btheta\|_{\R^n}=1$. 

We characterize the left tail probability of $\lambda_{\min}(\bar{\bfA}_{n}^{M})$ in terms of its exponential decay in $M$ and increment in $n$ in Lemma \ref{thm:min-eigen_C}-- Lemma \ref{thm:min-eigen}, which are proved using the Chernoff, Bernstein, and PAC-Bayesian inequalities, respectively. 


\begin{lemma}[Chernoff left tail probability of the smallest eigenvalue]\label{thm:min-eigen_C}
	Consider $\bar{\bfA}_n^M$ as defined in \eqref{eq:Ab_A} associated with the basis functions $\{\psi_k\}_{k=1}^{\infty}$ satisfying Assumption {\rm\ref{assum:bd-eigenfn}}. Then, we have
\begin{equation}\label{Eq:Low_Min_est_C}
		\P\left\{ \lambda_{\min}(\bar{\bfA}_{n}^{M}) \leq (1-\varepsilon) c_{\bar\calL} \right\}\leq n  \sbracket{ \frac{e^{-\varepsilon}}{(1-\varepsilon)^{1-\varepsilon}}}^{\frac{c_{\bar\calL}  M}{nC_{\max}^2}}\,
	\end{equation}
	for any $\varepsilon\in (0,1)$. In particular, 
	\begin{equation}\label{Eq:Low_Min_est_C2}
		\P\left\{ \lambda_{\min}(\bar{\bfA}_{n}^{M}) \leq \frac{c_{\bar\calL}}{2}  \right\}\leq n \sbracket{\frac{e}{2}}^{-\frac{c_{\bar\calL}  M}{2nC_{\max}^2}}=:\calE_{c_{\bar\calL}}^{{\rm Che}}(n,M) \,.
	\end{equation}
\end{lemma}
\begin{proof}
    The proof follows from the matrix Chernoff inequality (Theorem 5.1.1 in \cite{Tropp2015}), which we recall in Appendix \ref{sec:append}. For $\bar{\bfA}_{n}^{\infty}=\E[\bar{\bfA}_{n}^{M}]$, note that Lemma \ref{lemma:projEst} implies
	\begin{equation}\label{Eq:Coer_Mat_Che}
		\lambda_{\min}(\bar{\bfA}_{n}^{\infty})=\inf_{\btheta\in S^{n-1}}\btheta^\top \bar{\bfA}_{n}^{\infty}\btheta=\inf_{\btheta\in S^{n-1}}\frac{1}{N}\E[\|R_{\phi_{\btheta}}[X]\|_{\R^{Nd}}^2]\geq c_{\bar \calL}>0\,.
	\end{equation} 	
	We denote $\bPhi^m =(R_{\psi_1}[X^{m}],\ldots,R_{\psi_n}[X^{m}]) \in \R^{Nd\times n}$ for each sample $X^m$ and write 
    \begin{align*}
        \bar{\bfA}_{n}^{M}=\frac{1}{MN}\sum_{m=1}^M [\bPhi^m]^\top \bPhi^m=:\sum_{m=1}^M Q^{m}, 
    \end{align*}
    where $\{Q^{m}=\frac{1}{MN}[\bPhi^m]^\top \bPhi^m\}_{m=1}^M$ is a sequence of  independent Hermitian matrices with dimension $n$. It is clear $\lambda_{\min}(Q^{m})\geq 0$ and 
	\begin{align*}
		\lambda_{\max}(Q^{m})=\sup_{\btheta\in S^{n-1}}\btheta^\top Q^{m} \btheta\leq \text{tr}(Q^{m}) \leq \frac{nC_{\max}^2}{M}\, 
	\end{align*} 
for all $1\leq m\leq M$ since $Q^m_{ij}\le \frac{C^2_{\max}}{M}$ for all $1\leq i,j\leq n$ by the uniformly bounded condition on the basis functions. Thus, the matrix Chernoff inequality (Theorem \ref{thm:matChernoff}) and Eq. \eqref{Eq:Coer_Mat_Che} give the left tail probability inequality \eqref{Eq:Low_Min_est_C}, namely
	\begin{align*}
		\P\left\{ \lambda_{\min}(\bar{\bfA}_{n}^{M}) \leq (1-\varepsilon) c_{\bar\calL} \right\} &\leq \P\left\{ \lambda_{\min}(\bar{\bfA}_{n}^{M}) \leq (1-\varepsilon) \lambda_{\min}(\bar{\bfA}_{n}^{\infty}) \right\} \\
		&\leq  n  \sbracket{ \frac{e^{-\varepsilon}}{(1-\varepsilon)^{1-\varepsilon}}}^{\frac{\lambda_{\min}(\bar{\bfA}_{n}^{\infty})  M}{nC_{\max}^2}} \leq n \sbracket{\frac{e^{-\varepsilon}}{(1-\varepsilon)^{1-\varepsilon}}}^{\frac{c_{\bar\calL}  M}{nC_{\max}^2}}\,.
	\end{align*} 
	Letting $\varepsilon=\frac 12$, we get the left tail probability inequality \eqref{Eq:Low_Min_est_C2}.
\end{proof}

\begin{lemma}[{Bernstein} left tail probability of the smallest eigenvalue]\label{thm:min-eigen_B}
	Consider $\bar{\bfA}_n^M$ as defined in \eqref{eq:Ab_A} associated with the basis functions $\{\psi_k\}_{k= 1}^{\infty}$ satisfying Assumption {\rm\ref{assum:bd-eigenfn}}. Then, we have
	\begin{equation}\label{Eq:Low_Min_est_B2}
		\P\left\{ \lambda_{\min}(\bar{\bfA}_{n}^{M}) \leq \frac{c_{\bar\calL}}{4}  \right\}\leq 2n \exp\rbracket{-\frac{9Mc_{\bar\calL}^2/64}{n^2 C_{\max}^4+C_{\max}^2c_{\bar\calL}/4}}=:\calE_{c_{\bar\calL}}^{{\rm Ber}}(n,M)\,.
	\end{equation}
\end{lemma}
The matrix Bernstein inequality is commonly used in the context of learning interaction kernels; see, for example, {\rm\cite{LMT21_JMLR,LMT22,LZTM19pnas}}. The proof of it is included in Appendix \ref{sec:Mat_Concen}.  

\begin{remark}\label{Rmk:Mat_Ber}
A limitation of the Bernstein left tail probability bound in \eqref{Eq:Low_Min_est_B2} is that it requires $\beta>1/2$ to ensure exponential decay as $M\to \infty$ within the minimax framework with $n=M^{\frac{1}{2\beta+1}}$. In contrast, the Chernoff left tail probability bound in \eqref{Eq:Low_Min_est_C} requires only $\beta>0$ to achieve the exponential decay, and it enables the optimal upper minimax rate for hypothesis space $H_{\rho}^{\beta}(L)$ with $\beta\leq 1/2$, which includes discontinuous interaction kernels that are widely used in opinion dynamics.    
\end{remark}


Next, we introduce another left tail probability of the smallest eigenvalue based on PAC-Bayes inequality (Lemma \ref{lemma:PAC_Bay}). The following fourth-moment condition is crucial in the application of PAC-Bayes inequality. It provides an alternative to the Sobolev embedding condition in Assumption \ref{assum:SoboEmb}, particularly in the context of learning convolution kernels in \cite[Section 3.2]{ZhangWangLu2025}.  
It is an extension of the $L^4-L^2$ norm equivalence condition on the distribution of the input random vector for parametric linear regression in {\rm\cite[Eq.~(3)]{Oliveira2016} and \cite[Assumption 3]{Mourtada2022}} and for covariance matrix estimation in \cite{Zhivotovskiy20}. Here, the condition leads to constraints on the distribution of $X$.

\begin{assumption}[Fourth-moment condition]\label{assum:L2L4} 
Let $S^{n-1}$ be the $n$-dimensional sphere and $\phi_{\theta}=\sum_{k=1}^n \theta_k \psi_k$ with $\theta\in S^{n-1}$ and $\{\psi_k\}$ being orthonormal basis functions of $L_{\rho}^{2}$.
Suppose there exists a constant $\kappa>0$, independent of $n$, such that 
\begin{equation}\label{eq:4th-2ndMmt}
\sup_{\theta\in S^{n-1}} \frac{ \E[ \|R_{\phi_{\theta}}[X]\|_{\R^{Nd}}^4]}{(\E[\|R_{\phi_{\theta}}[X]\|_{\R^{Nd}}^2])^2}\leq \kappa <\infty\,. 
\end{equation}
\end{assumption}

\begin{lemma}[{PAC-Bayes} left tail probability of the smallest eigenvalue]\label{thm:min-eigen}
Consider $\bar{\bfA}_n^M$ as defined in \eqref{eq:Ab_A} associated with the basis functions $\{\psi_k\}_{k= 1}^{\infty}$ satisfying Assumption {\rm\ref{assum:bd-eigenfn}} and {the fourth moment condition in Assumption {\rm{\ref{assum:L2L4}}} holds}. 
Then, we have 
 \begin{equation}\label{Eq:Low_Min_est2}
     \P\left\{\lambda_{\min}(\bar{\bfA}_{n}^{M}) \leq \frac{c_{\bar\calL}}{4} \right\} \leq  \exp\rbracket{n\log\left(\frac{5C_{\max}^2}{c_{\bar\calL}}\right)-{\frac{ M c_{\bar\calL}^2}{64 \kappa}}}=:\calE_{c_{\bar\calL}}^{{\rm PAC}}(n,M) 
 \end{equation}
where {$M\geq \frac{256 \kappa }{9 c_{\bar\calL}^2 }\log\left(\frac{5C_{\max}^2}{c_{\bar\calL}}\right) n$} and $n\geq 2$.
\end{lemma}
 
The proof for Lemma \ref{thm:min-eigen} is presented in Appendix \ref{sec:proof_left_tail_prob2}. Our proof adapts the approach outlined in \cite{Mourtada2022}, with simplifications tailored to the distinct assumptions inherent in a nonparametric setting. The primary tool is the PAC-Bayes inequality, which we recall in Lemma \ref{lemma:PAC_Bay}, introduced to address the left tail probability of the smallest eigenvalue in \cite{Oliveira2016} and further customized in \cite{Mourtada2022}.

The bound \eqref{Eq:Low_Min_est2} sharpens the Chernoff left-tail estimate \eqref{Eq:Low_Min_est_C} by eliminating the factor $n$, but it does not change the resulting optimal minimax rate. Since the matrix Chernoff approach is substantially simpler, our main upper minimax results (Theorems \ref{cor:L2rho_upper} and \ref{cor:L2rho_upper_case2}) do not invoke the PAC-Bayesian left-tail bound and hence avoid Assumption \ref{assum:L2L4}. We include this bound to illustrate and compare alternative techniques for deriving left-tail probability bounds, which are of interest in their own right.


\section{Lower minimax rate}\label{sec:lowerBd}
This section is dedicated to the lower minimax rate by the Fano-Tsybakov method \cite[Chapter 2]{tsybakov2008introduction}. The lower rate matches the upper rate in Theorem \ref{thm:L2rho_upper}, confirming the optimality of the rate. 
Recall that $\calC_0^{\beta}(L)$ is the H\"older continuous class defined in \eqref{eq:class-H\"older}, $\rho$ is the exploration measure in Definition {\rm\ref{def:rho}}, and $ \E_{\phi_*}$ is the expectation with respect to the dataset $\{(X^{m},Y^m)\}_{m=1}^M$ generated from model \eqref{eq:model} with $\phi_*$.  

\begin{theorem}[lower minimax rate]\label{thm:L2rho_lower} 
Under Assumption {\rm \ref{assump:noise-X}} and Assumption \ref{assump:noise} \ref{Assump:Density_Noise}, if $\beta>0$, then there exists a constant $c_{\textup{Lower}}>0$ independent of $M$ such that
\begin{equation}\label{ineq:lbd_main}
\liminf_{M\to \infty} \inf_{\widehat{\phi}_M} \sup_{\phi_*\in \calC_0^{\beta}(L)}    \E_{\phi_*}\left[  M^{\frac{2\beta}{2\beta+1}} \| \widehat{\phi}_M-\phi_* \|_{L^2_\rho} ^2 \right]\geq c_{\textup{Lower}}
\end{equation}
where $\inf_{\widehat{\phi}_M}$ is the infimum over all estimators based on the observation model with $M$ i.i.d. samples. Here, $c_{\textup{Lower}}=c_0 c_{\beta,N}$ with $c_0$ independent of $M,N$ and $c_{\beta,N}= N^{-\frac{2\beta}{2\beta+1}}$.
\end{theorem}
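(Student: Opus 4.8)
The plan is to prove \eqref{ineq:lbd_main} by the Fano--Tsybakov reduction to hypothesis testing, following \cite[Chapter~2]{tsybakov2008introduction}, with the one non-classical twist that the metric in the reduction is the weighted norm $\|\cdot\|_{L^2_\rho}$. Since $\calC(\beta,L)\subseteq W_\rho(\beta,L)$, it suffices to exhibit, for each large $M$, a finite family $\{\phi_{\omega^{(0)}},\dots,\phi_{\omega^{(K)}}\}\subset\calC(\beta,L)$ that is $(2s)$-separated in $L^2_\rho$ while the $M$-sample laws $P_{\phi_{\omega^{(j)}}}^{\otimes M}$ are mutually statistically close in the sense required by Fano, and to arrange $s\asymp (NM)^{-\beta/(2\beta+1)}$. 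The ingredients are: (i) a localized Hölder bump family; (ii) a lower bound on pairwise $L^2_\rho$-distances together with an upper bound on pairwise KL-divergences; (iii) Varshamov--Gilbert pruning; and (iv) Tsybakov's Fano-type inequality plus Markov's inequality.

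For (i), invoke Lemma \ref{lemma:rho}: $\rho$ has a continuous density, so (being a probability measure) there is a subinterval $[a,b]$ of its support on which $0<\rho_-\leq\rho'\leq\rho_+<\infty$. Fix a $C^\infty$ function $K_0$ supported in $(0,1)$ with $K_0\in\calC(\beta,1/2)$ and $\|K_0\|_{L^2(dx)}>0$. For an integer $m$ to be chosen, set $h=(b-a)/m$, $x_k=a+(k-1)h$, and define for $\omega=(\omega_1,\dots,\omega_m)\in\{0,1\}^m$
\[
\phi_\omega(r)=L\,h^{\beta}\sum_{k=1}^m \omega_k\,K_0\!\big((r-x_k)/h\big).
\]
Because the bumps have disjoint supports contained in $[a,b]$, a routine check (as in the proof of \cite[Theorem~2.8]{tsybakov2008introduction}) gives $\phi_\omega\in\calC(\beta,L)$ for every $\omega$ and every $\beta>0$. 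By the Varshamov--Gilbert bound \cite[Lemma~2.9]{tsybakov2008introduction} there is $\{\omega^{(0)},\dots,\omega^{(K)}\}\subseteq\{0,1\}^m$ with $\omega^{(0)}=0$, $K\geq 2^{m/8}$, and pairwise Hamming distances at least $m/8$.

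For (ii), orthogonality of the bumps in $L^2(dx)$ and $\rho'\geq\rho_-$ on $[a,b]$ give
\[
\|\phi_{\omega^{(j)}}-\phi_{\omega^{(j')}}\|_{L^2_\rho}^2\ \geq\ \rho_-\,L^2h^{2\beta+1}\|K_0\|_{L^2}^2\,\big|\{k:\omega^{(j)}_k\neq\omega^{(j')}_k\}\big|\ \geq\ \tfrac{\rho_-}{8}\,L^2\|K_0\|_{L^2}^2(b-a)^{2\beta+1}\,m^{-2\beta}=:4s^2 .
\]
For the divergence, the $X$-marginal is the same for all hypotheses and, conditionally on $X$, $Y$ has density $p_\eta(\cdot-R_\phi[X])$; hence by tensorization and Assumption \ref{Assump:Density_Noise} (applied with $v=R_{\phi_{\omega^{(j)}}-\phi_0}[X]$, using $R_{\phi}-R_{\phi_0}=R_{\phi-\phi_0}$),
\[
\mathrm{KL}\big(P_{\phi_{\omega^{(j)}}}^{\otimes M}\,\|\,P_{\phi_0}^{\otimes M}\big)= M\,\E_X\,\mathrm{KL}\big(p_\eta(\cdot-R_{\phi_{\omega^{(j)}}}[X])\,\|\,p_\eta(\cdot-R_{\phi_0}[X])\big)\leq M c_\eta\,\E_X\big[\|R_{\phi_{\omega^{(j)}}-\phi_0}[X]\|_{\R^{Nd}}^2\big].
\]
By Definition \ref{def:inv-op}, $\E_X\|R_\psi[X]\|_{\R^{Nd}}^2=N\langle\bar\calL\psi,\psi\rangle_{L^2_\rho}\leq N\|\psi\|_{L^2_\rho}^2$ using $\|\bar\calL\|_{\mathrm{op}}\leq1$ from \eqref{Ineq:contr_L}; combined with $\rho'\leq\rho_+$ this gives $\mathrm{KL}\big(P_{\phi_{\omega^{(j)}}}^{\otimes M}\,\|\,P_{\phi_0}^{\otimes M}\big)\leq C\,c_\eta N L^2\,M\,m^{-2\beta}$ for a constant $C$ depending only on $\rho_+,\|K_0\|_{L^2},\beta,b-a$. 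Choose $m=\big\lceil(C'c_\eta N M)^{1/(2\beta+1)}\big\rceil$ with $C'$ large enough (depending only on $\beta,L,\rho_\pm,K_0$) that $\tfrac1K\sum_{j=1}^K\mathrm{KL}(P_{\phi_{\omega^{(j)}}}^{\otimes M}\,\|\,P_{\phi_0}^{\otimes M})\leq\alpha\log K$ for a fixed $\alpha\in(0,1/8)$, using $\log K\geq\tfrac m8\log2$. Then Tsybakov's Fano-type inequality \cite[Theorem~2.5]{tsybakov2008introduction} gives $\inf_{\widehat\phi}\max_{0\leq j\leq K}P_{\phi_{\omega^{(j)}}}(\|\widehat\phi-\phi_{\omega^{(j)}}\|_{L^2_\rho}\geq s)\geq c''>0$, and Markov's inequality upgrades this to $\inf_{\widehat\phi}\sup_{\phi_*\in\calC(\beta,L)}\E_{\phi_*}\|\widehat\phi-\phi_*\|_{L^2_\rho}^2\geq c''s^2$. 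Since $s^2\asymp m^{-2\beta}\asymp(c_\eta NM)^{-2\beta/(2\beta+1)}$, treating $c_\eta$ as a fixed constant yields $s^2\geq c_0\,N^{-\frac{2\beta}{2\beta+1}}M^{-\frac{2\beta}{2\beta+1}}$, which is exactly \eqref{ineq:lbd_main} with $c_{\mathrm{Lower}}=c_0\,N^{-\frac{2\beta}{2\beta+1}}$ after multiplying by $M^{\frac{2\beta}{2\beta+1}}$ and taking $\liminf_{M\to\infty}$.

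The only real work is in ingredients (i)--(ii), precisely in threading the exploration measure $\rho$ correctly through the construction: one must first use continuity of $\rho'$ (Lemma \ref{lemma:rho}) to locate an interval on which $\rho'$ is bounded away from $0$ and $\infty$ --- the classical unweighted construction never needs this --- and then verify that this sandwich $\rho_-\leq\rho'\leq\rho_+$ passes cleanly through both the $L^2_\rho$ separation bound (via $\rho_-$) and the KL bound (via $\rho_+$), while the bumps stay in $\calC(\beta,L)$. The second delicate point is the $N$-bookkeeping: the factor $N$ in the KL bound comes solely from $\E_X\|R_\psi[X]\|_{\R^{Nd}}^2\leq N\|\psi\|_{L^2_\rho}^2$, and it is precisely this factor, carried through the choice $m\asymp(NM)^{1/(2\beta+1)}$, that produces the stated $c_{\beta,N}=N^{-2\beta/(2\beta+1)}$; in particular one must upper-bound $\langle\bar\calL\psi,\psi\rangle$ by $\|\psi\|^2$ here, rather than lower-bound it by $c_{\bar\calL}\|\psi\|^2$. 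Everything else --- Varshamov--Gilbert, the Fano inequality, the Markov step --- is entirely standard.
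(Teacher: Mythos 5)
Your proposal is correct, and it follows the same overall scheme as the paper's proof (reduction to multiple-hypothesis testing, disjoint H\"older bumps placed where the density of $\rho$ is bounded below, Varshamov--Gilbert pruning, a KL bound of order $c_\eta N M m^{-2\beta}$, and the choice $m\asymp (NM)^{1/(2\beta+1)}$ giving $s^2\asymp (NM)^{-2\beta/(2\beta+1)}$). The two places where your execution genuinely differs are worth noting. First, the KL step: the paper conditions on the design $X^1,\dots,X^M$ and bounds the conditional KL \emph{deterministically for every realization} of the data, exploiting disjointness of the bump supports (each pairwise distance $r_{ij}^m$ activates at most one bump, so the sum of indicators over samples and bumps is at most $M$); you instead bound the \emph{unconditional} KL by averaging over $X$, using $\E_X\|R_{\phi_\omega}[X]\|_{\R^{Nd}}^2=N\langle\bar\calL\phi_\omega,\phi_\omega\rangle_{L^2_\rho}\le N\|\phi_\omega\|_{L^2_\rho}^2$ via the contraction bound \eqref{Ineq:contr_L}. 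Both yield the same $N M m^{-2\beta}$ scaling (in your version the extra factor $m$ from $\|\phi_\omega\|_{L^2_\rho}^2\asymp h^{2\beta+1}m$ is absorbed by $hm=b-a$); your route avoids the conditional-probability bookkeeping but uses the normal operator and exchangeability, while the paper's route needs only the pointwise disjointness argument and gives a bound uniform over datasets. You correctly isolate the key sign of the estimate here: the operator norm upper bound, not the coercivity lower bound, and this is exactly where the factor $N$, hence $c_{\beta,N}=N^{-2\beta/(2\beta+1)}$, enters. Second, your localization of $\rho$ is a simpler single-interval version: continuity of $\rho'$ (Lemma \ref{lemma:rho}) gives one interval $[a,b]$ with $0<\rho_-\le\rho'\le\rho_+<\infty$, which suffices for the rate, whereas the paper works on the whole open set $\{\rho'>\underline{a_0}\}$ and counts intervals across its components; that extra care only affects constants, not the rate. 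No gaps.
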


For the lower minimax rate, we note that the exploration density $\rho'$ need not be bounded below away from zero. Thus, it can be utilized to match the upper minimax rates for both uniformly and non-uniformly bounded basis functions, as established in Theorems \ref{cor:L2rho_upper} and \ref{cor:L2rho_upper_case2}. 
We follow the general scheme in \cite[Chapter 2 and Theorem 2.11] {tsybakov2008introduction}. This scheme reduces the infimum over all estimators and the supremum over all functions to the bound of the probability of testing error of a finite hypotheses test. 
We summarize it in three steps, as follows. The detailed proof is provided in   Appendix \ref{sec:append2}.
\begin{enumerate}[leftmargin=+.58in]
    \item[Step 1:] Reduce \eqref{ineq:lbd_main} to bounds in probability by Markov inequality and to a finite number of hypotheses $\Theta=\{\phi_{0,M},\cdots,\phi_{K,M}\}\subseteq \calC_0^{\beta}(L)$. We set $\phi_{0,M}\equiv 0$ so that $\P_k \ll \P_0$, where $\P_k$ denotes the measure of the model with $\phi_{k,M}$. 
    \item[Step 2:] Transform to bounds in the average probability of the test error of $2s$-separated hypotheses. The key idea in the transformation is a minimum distance test $\kappa_{\text{test}} = \argmin_{1\leq k\leq K} d(\widehat \phi_M,\phi_{k,M})$ as in \cite[(2.8)]{tsybakov2008introduction}.  
    \item[Step 3:] Bound the average probability of the test error from below by the Kullback-Leibler divergence of the hypotheses. 
\end{enumerate}
  
Our main innovation, which is also the major difficulty, is the construction of the hypotheses $\{\phi_{0,M},\phi_{1,M},\cdots,\phi_{K,M}\}\subseteq \calC_0^{\beta}(L)$ satisfying two conditions: (i) they are $2s$-separated in $L^2_\rho$, namely, $\|\phi_{k,M}-\phi_{k',M}\|_{L^2_\rho}\geq 2s$ and (ii) their average Kullback-Leibler divergence $\KL(\P_{k}, \P_{0})$ has a logarithmic growth in $K$. These two conditions are used in the next lemma to prove Step 3. This lemma follows from a combination of a lower bound based on multiple hypotheses, Fano's lemma, and its corollary, which are in \cite[Theorem 2.5, Lemma 2.10, and Corollary 2.6]{tsybakov2008introduction} respectively, and we omit its proof.  

\begin{remark}[Optimal minimax rate]\label{Rmk:Holder_minimax}
As we discussed in Remark \ref{rmk:Soblev_Holder}, 
$\calC_0^{\beta}(L)\subseteq H_{0,\rho}^{\beta}(L)$ when $\beta$ is an integer. 
Thus, combing the upper bound rate \eqref{ineq:upper_main2} in Theorem \ref{cor:L2rho_upper} and the lower bound rate \eqref{ineq:lbd_main} in Theorem \ref{thm:L2rho_lower} implies the optimal minimax rate:
\begin{align*}
    c_{\textup{Lower}}&\leq \liminf_{M\to \infty} \inf_{\widehat{\phi}_M} \sup_{\phi_*\in \calH^{\beta}}    \E_{\phi_*}[  M^{\frac{2\beta}{2\beta+1}} \| \widehat{\phi}_M-\phi_* \|_{L^2_\rho} ^2 ] \\
	&\leq \limsup_{M\to \infty} \inf_{\widehat{\phi}_M} \sup_{\phi_*\in \calH^{\beta}}    \E_{\phi_*}[  M^{\frac{2\beta}{2\beta+1}} \| \widehat{\phi}_M-\phi_* \|_{L^2_\rho} ^2 ]\leq C_{\textup{Upper}}\,,
\end{align*}
where $\calH^{\beta}$ can be $\calC_0^{\beta}(L)$, $H_{0,\rho}^{\beta}(L)$ or $\calC^{\beta}(L)$, $H_{\rho}^{\beta}(L)$ with (weighted) trigonometric basis functions.

For fractional-valued $\beta$, namely, $\beta\in\bigcup_{l=1}^{\infty}(l, l+1)$, Lemma \ref{Lem:Holder<spectral} establishs 
\begin{align*}
	\mathcal{C}_0^{\beta_{\varepsilon}}(Q)\subseteq H_{0,\rho}^{\beta}(L), 
\end{align*}
where $\beta_{\varepsilon} := \beta + \varepsilon\leq l+1$ with $\varepsilon > 0$ is a sufficiently small positive number and $Q$ may depend on $L$, $\beta$, and $\varepsilon$. The inclusion of $\mathcal{C}_0^{\beta}(L)$ in $H_{0,\rho}^{\beta}(L)$ may not hold since $\mathcal{C}_0^{\beta}(L)$ is not even necessary a subset of the Gagliardo Sobolev space $W^\beta$. Thus, following the ``inclusion'' scheme in \cite{tsybakov2008introduction} as above, we can only obtain a suboptimal rate for $\beta_{\varepsilon} = \beta + \varepsilon$, \begin{align*}
	c_{\textup{Lower}}&\leq \liminf_{M\to \infty} \inf_{\widehat{\phi}_M} \sup_{\phi_*\in \calC_0^{\beta_{\varepsilon}}(L)}    \E_{\phi_*}[  M^{\frac{2\beta_{\varepsilon}}{2\beta_{\varepsilon}+1}} \| \widehat{\phi}_M-\phi_* \|_{L^2_\rho} ^2 ] \\
	&\leq \limsup_{M\to \infty} \inf_{\widehat{\phi}_M} \sup_{\phi_*\in H_{\rho}^{\beta}(L)}    \E_{\phi_*}[  M^{\frac{2\beta}{2\beta+1}} \| \widehat{\phi}_M-\phi_* \|_{L^2_\rho} ^2 ]\leq C_{\textup{Upper}}\,.
\end{align*}
This phenomenon underscores the additional challenges posed by fractional Sobolev classes compared to their integer-order counterparts. In particular, the standard inclusion argument is no longer applicable when one seeks optimal minimax rates.  A potential solution is to derive the lower-bound rate \eqref{ineq:lbd_main} by replacing $\mathcal{C}_0^{\beta}(L)$ with either $H_{0,\rho}^{\beta}(L)$ or $H_{\rho}^{\beta}(L)$. An alternative approach is the binary-coefficient construction in the eigenfunction expansion,  which has successfully achieved this for $\beta>1/2$ (see, e.g., {\rm \cite{CaiYuan2012, SunDuWangMa2018, yuan2010reproducing, ZhangWangLu2025}}). But its validity when $\beta \le 1/2$ and when the exploration density $\rho'$ may vanish, the regime of primary interest here, remains unclear. Exploring binary-coefficient techniques in the fractional Sobolev regime with non-uniformly bounded basis represents a potential direction for future work.
\end{remark}

\begin{lemma}[Lower bound for hypothesis test error ]\label{thm:lb_hypothesis_KL} 
Let $\Theta= \{\theta_k\}_{k=0}^K$ with $K\geq 2$ be a set of $2s$-separated hypotheses, i.e., $d(\theta_k,\theta_{k'})\geq 2s>0$ for all $ 0\leq k<k'\leq K$, for a given metric $d$ on $\Theta$.  Denote $\P_{k}=\P_{\theta_k}$ and suppose they satisfy $\P_{k}\ll \P_{0}$ for each $ k\geq 1$ and 
		\begin{equation}\label{Ineq:Kullback}
			\frac{1}{K+1} \sum\nolimits_{k=1}^K \KL(\P_{k}, \P_{0}) \leq \alpha \log(K)\,, \quad \text{with } 0<\alpha<1/8\,.
		\end{equation}
Then, the average probability of the hypothesis testing error  
has a lower bound:  
   \begin{equation}\label{Main:Lower_bd}
       \       \bar{p}_{e,M}:=\inf_{\kappa_{\textup{test}}} \frac{1}{K+1} \sum\nolimits_{k=0}^K \P_{k}\big(\kappa_{\textup{test}} \neq k  \big) \geq \frac{\log(K+1)-\log(2)}{\log(K)}-\alpha\,, 
   \end{equation}
   where $\inf_{\kappa_{\textup{test}}}$ denotes the infimum over all tests based on the observation model with $M$ i.i.d. samples.  
\end{lemma}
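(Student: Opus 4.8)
The plan is to recognize \eqref{Main:Lower_bd} as Fano's method and to derive it directly from the sharp form of Fano's inequality together with a mutual-information estimate. Note first that the $2s$-separation hypothesis on $\Theta$ is not actually used in \eqref{Main:Lower_bd} itself---it is retained only because the minimum-distance reduction of Step~2 in the outer scheme needs it---so it may be ignored for the purpose of this lemma. Concretely, I would introduce an auxiliary index $V$ drawn uniformly from $\{0,1,\dots,K\}$ and, conditionally on $\{V=k\}$, let the data $\{(X^m,Y^m)\}_{m=1}^M$ be distributed according to $\P_k=\P_{\theta_k}$. Any test $\kappa_{\text{test}}$ is a measurable function of the data, so $V\to(\text{data})\to\kappa_{\text{test}}$ is a Markov chain, and the uniform prior gives $\frac{1}{K+1}\sum_{k=0}^K \P_k(\kappa_{\text{test}}\neq k)=\P(\kappa_{\text{test}}\neq V)=:p_e$.

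Next I would invoke the sharp Fano inequality $H(V\mid\kappa_{\text{test}})\le \log 2 + p_e\log\bigl((K+1)-1\bigr)=\log 2 + p_e\log K$ (the binary-entropy term is at most $\log 2$, and it is the ``$-1$'' in $\log((K+1)-1)$ that produces the $\log K$ in the denominator of \eqref{Main:Lower_bd}, rather than the weaker $\log(K+1)$). Since $V$ is uniform, $H(V)=\log(K+1)$, and the data-processing inequality gives $H(V\mid\kappa_{\text{test}})=H(V)-I(V;\kappa_{\text{test}})\ge \log(K+1)-I(V;\text{data})$. Combining the two bounds,
\[
p_e \;\ge\; \frac{\log(K+1)-\log 2 - I(V;\text{data})}{\log K}\;\ge\; \frac{\log(K+1)-\log 2}{\log K} - \frac{I(V;\text{data})}{\log K}.
\]

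It then remains to bound the mutual information by the averaged Kullback--Leibler divergence in \eqref{Ineq:Kullback}. Writing $\bar\P=\frac{1}{K+1}\sum_{j=0}^K\P_j$ for the mixture, one has the standard identity $I(V;\text{data})=\frac{1}{K+1}\sum_{k=0}^K\KL(\P_k,\bar\P)$; since $\P_k\ll\P_0$ for all $k$ (this is exactly the hypothesis that makes all the divergences finite and $\bar\P\ll\P_0$, hence licenses the manipulation), one gets $\sum_{k=0}^K\KL(\P_k,\bar\P)=\sum_{k=1}^K\KL(\P_k,\P_0)-(K+1)\KL(\bar\P,\P_0)\le \sum_{k=1}^K\KL(\P_k,\P_0)$. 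Therefore $I(V;\text{data})\le \frac{1}{K+1}\sum_{k=1}^K\KL(\P_k,\P_0)\le \alpha\log K$ by \eqref{Ineq:Kullback}, so $I(V;\text{data})/\log K\le\alpha$. Substituting this into the previous display and taking the infimum over all tests $\kappa_{\text{test}}$ yields \eqref{Main:Lower_bd}.

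I do not expect any genuine obstacle here: the statement is textbook Fano, matching \cite[Theorem 2.6, Lemma 2.10, Corollary 2.6]{tsybakov2008introduction}. The only points needing care are (i) using the sharp Fano bound with $\log((K+1)-1)$ in place of $\log(K+1)$, which is what pins down the stated denominator; (ii) routing the mutual-information estimate through the mixture $\bar\P$ and then discarding the nonpositive term $-(K+1)\KL(\bar\P,\P_0)$ so that the $\P_0$-based hypothesis \eqref{Ineq:Kullback} can be applied verbatim; and (iii) observing that the constraint $0<\alpha<1/8$ is not needed for \eqref{Main:Lower_bd} to hold as an inequality---it only guarantees that the right-hand side is bounded away from zero as $K\to\infty$, which is the feature exploited in the subsequent steps of the lower-bound argument.
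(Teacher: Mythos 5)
Your proof is correct, and it is essentially the argument the paper relies on: the paper omits the proof and cites Tsybakov's Fano-based results (Theorem 2.6, Lemma 2.10, Corollary 2.6), which proceed exactly as you do — uniform prior, sharp Fano with the $\log K=\log((K+1)-1)$ term, data processing, and the mutual-information bound $I(V;\text{data})\le\frac{1}{K+1}\sum_{k=1}^K\KL(\P_k,\P_0)$ via the mixture identity. Your side remarks (the $2s$-separation and the constraint $\alpha<1/8$ are not needed for the inequality itself) are also accurate.
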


The next lemma constructs the hypothesis functions $\{\phi_{0,M},\phi_{1,M},\cdots,\phi_{K,M}\}$. Its proof is deferred to Appendix \ref{sec:append2}. 

\begin{lemma}\label{lemma:construction}
For each data set $\{(X^{m},Y^m)\}_{m=1}^M$, there exists a set of hypothesis functions $\{\phi_{0,M}\equiv 0,\phi_{1,M},\cdots,\phi_{K,M}\}$ and positive constants $C_0,C_1$ independent of $M$ and $N$, where 
\begin{equation}\label{eq:K}
K\geq 2^{\bar K/8},\quad \text{ with } \bar K= \lceil c_{0,N} M^{\frac{1}{2\beta+1}}\rceil,  \quad  c_{0,N}=C_0 N^{\frac{1}{2\beta+1}}, 
\end{equation}
such that the following conditions hold:  
\begin{enumerate}[label={\rm (C-\arabic*)},leftmargin=\widthof{(C-3)}+\labelsep]
	\item\label{Cond:a} H\"older continuity:  $\phi_{k,M}\in \calC_0^{\beta}(L)$ (defined in Eq. \eqref{eq:class-H\"older}) for each $k=1,\cdots, K$;   
	\item\label{Cond:b} $2s_{N,M}$-separated: $\|\phi_{k,M}-\phi_{k',M}\|_{L^2_\rho}\geq 2s_{N,M}$ with $s_{N,M}= C_1 c_{0,N}^{-\beta} M^{-\frac{\beta}{2\beta+1}}$;   
	\item\label{Cond:c} Kullback-Leibler divergence estimate: $\frac 1{K} \sum_{k=1}^{K} \KL(\bar{\P}_k,\bar{\P}_0)\leq \alpha \log(K)$ with ${\alpha  <1/8}$, 
	where $ \bar{\P}_k(\cdot )={\P}_{\phi_{k,M}}(\cdot \mid X^{1},\ldots, X^{M})$. 
\end{enumerate}
\end{lemma}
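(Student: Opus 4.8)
The plan is to adapt the classical bump-function construction used for lower bounds over H\"older classes (see \cite[\S 2.6.1]{tsybakov2008introduction}): one places rescaled smooth bumps on a fine grid, selects a well-separated subfamily by the Varshamov--Gilbert bound, and carries out all estimates in the weighted norm $\|\cdot\|_{L^2_\rho}$; the one genuinely new ingredient is tuning the grid size so that the factor $N$ produced by the interaction operator $R_\phi$ is absorbed into $c_{0,N}$. Concretely, by Lemma \ref{lemma:rho} the exploration measure $\rho$ has a continuous density, so there is an interval $[a,b]\subset\R^+$ with $b>a$ on which this density is bounded below by some $c_\rho>0$. Fix a function $\varphi\in C^\infty(\R)$ supported in $(-\tfrac12,\tfrac12)$, not identically zero, scaled to have sufficiently small $\calC(\beta,\cdot)$-seminorm. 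Set $\bar K=\lceil c_{0,N}M^{1/(2\beta+1)}\rceil$ with $c_{0,N}=C_0N^{1/(2\beta+1)}$ ($C_0>0$ to be chosen large), $h=(b-a)/\bar K$, and $x_j=a+(j-\tfrac12)h$ for $1\le j\le\bar K$; for $\omega\in\{0,1\}^{\bar K}$ put $\phi_\omega(x)=h^{\beta}\sum_{j}\omega_j\varphi\big((x-x_j)/h\big)$, so the bumps have pairwise disjoint supports tiling $[a,b]$.

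The scaling $h^\beta\varphi(\cdot/h)$ leaves the $\calC(\beta,\cdot)$-seminorm invariant, and since $\varphi$ vanishes to all orders at $\pm\tfrac12$, a routine argument (treating points in distinct bump intervals via the flat region between them) gives $\phi_\omega\in\calC(\beta,L)$, which is \ref{Cond:a}. The Varshamov--Gilbert bound (see \cite[\S 2.6]{tsybakov2008introduction}) produces $\Omega\subseteq\{0,1\}^{\bar K}$ with $\mathbf 0\in\Omega$, cardinality $K+1\ge 2^{\bar K/8}$ (hence \eqref{eq:K}), and pairwise Hamming distance at least $\bar K/8$; relabel the associated functions $\phi_{0,M}\equiv 0,\phi_{1,M},\dots,\phi_{K,M}$. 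For \ref{Cond:b}, only the $\ge\bar K/8$ coordinates where $\omega^{(k)},\omega^{(k')}$ differ contribute, each by $h^{2\beta}\int\varphi^2((x-x_j)/h)\rho(x)\,dx\ge c_\rho\|\varphi\|_{L^2}^2 h^{2\beta+1}$, so that for $M$ large
\[
\|\phi_{k,M}-\phi_{k',M}\|_{L^2_\rho}^2\ \ge\ \tfrac18 c_\rho\|\varphi\|_{L^2}^2(b-a)^{2\beta+1}\,\bar K^{-2\beta}\ \ge\ 4C_1^2 c_{0,N}^{-2\beta}M^{-2\beta/(2\beta+1)}=(2s_{N,M})^2,
\]
with $C_1$ independent of $M,N$.

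The core is the Kullback--Leibler estimate \ref{Cond:c}. Conditionally on $X^1,\dots,X^M$, the measure $\bar\P_k$ is the product over $m$ of the laws of $R_{\phi_{k,M}}[X^m]+\eta^m$, so tensorization of the KL divergence together with Assumption \ref{Assump:Density_Noise} yields
\[
\KL(\bar\P_k,\bar\P_0)=\sum_{m=1}^M\KL\big(p_\eta(\cdot-R_{\phi_{k,M}}[X^m]),\,p_\eta\big)\ \le\ c_\eta\sum_{m=1}^M\big\|R_{\phi_{k,M}}[X^m]\big\|_{\R^{Nd}}^2.
\]
By Cauchy--Schwarz, $\|R_\phi[X]\|_{\R^{Nd}}^2\le\frac{N-1}{N^2}\sum_{i\ne j}|\phi(|X_i-X_j|)|^2\le\frac{(N-1)^2}{N}\|\phi\|_\infty^2$, and $\|\phi_{k,M}\|_\infty\le h^\beta\|\varphi\|_\infty$, hence $\KL(\bar\P_k,\bar\P_0)\le c_\eta N\|\varphi\|_\infty^2\,Mh^{2\beta}$. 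Now the grid size pays off: $Mh^{2\beta}\le(b-a)^{2\beta}M\bar K^{-2\beta}\le(b-a)^{2\beta}c_{0,N}^{-(2\beta+1)}\bar K=(b-a)^{2\beta}(C_0^{2\beta+1}N)^{-1}\bar K$, so the factor $N$ cancels, and using $\log K\ge\tfrac{\log2}{8}\bar K$ we get $\KL(\bar\P_k,\bar\P_0)\le\alpha\log K$ with $\alpha=8c_\eta\|\varphi\|_\infty^2(b-a)^{2\beta}/(C_0^{2\beta+1}\log2)$, uniformly in $k$ and $N$; choosing $C_0$ large enough makes $\alpha<1/8$, and averaging over $k$ preserves the bound.

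The main obstacle is not any single estimate but the bookkeeping of the $N$-dependence: $R_\phi$ averages over $\sim N^2$ pairs yet is damped by $1/N^2$, leaving a net factor $N$ in $\KL(\bar\P_k,\bar\P_0)$, and it is precisely this that forces the choice $c_{0,N}=C_0N^{1/(2\beta+1)}$ and, in turn, the $N^{-2\beta/(2\beta+1)}$ inside $s_{N,M}^2$ (and hence in $c_{\beta,N}$ of Theorem \ref{thm:L2rho_lower}). Secondary care is needed to check that $\phi_\omega\in\calC(\beta,L)$ globally across the gaps between bumps, and that $\bar\P_k\ll\bar\P_0$ so the divergence is finite --- both standard once one uses the full support of $p_\eta$ implicit in Assumption \ref{Assump:Density_Noise}.
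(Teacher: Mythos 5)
Your proposal is correct and follows essentially the same route as the paper's proof: rescaled bumps on a grid of width $h\sim \bar K^{-1}$ placed where $\rho$'s density is bounded below, Hölder membership by scaling invariance of the seminorm, separation and cardinality from the Varshamov--Gilbert bound, the conditional KL bound via tensorization and Assumption \ref{Assump:Density_Noise}, and the same key observation that the net factor $N$ in $\KL(\bar{\P}_k,\bar{\P}_0)\lesssim c_\eta N M h^{2\beta}$ is cancelled by taking $c_{0,N}=C_0N^{\frac{1}{2\beta+1}}$ with $C_0$ large. The only real deviation is in the grid construction: you place all $\bar K$ bumps in a single interval $[a,b]$ on which the continuous density exceeds some $c_\rho>0$ (which exists since a continuous probability density must be bounded below on some subinterval), whereas the paper tiles the possibly disconnected open set $\{\rho'>\underline{a_0}\}$; your version is a legitimate simplification and yields the same constants' independence of $M,N$. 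Two cosmetic points: the Varshamov--Gilbert lemma as used in the paper gives $K\geq 2^{\bar K/8}$ nonzero hypotheses (you wrote cardinality $K+1\geq 2^{\bar K/8}$), and your ``for $M$ large'' handling of the ceiling in $\bar K$ for \ref{Cond:b} matches what the paper does implicitly, so neither affects the argument.
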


\begin{remark}[The exponent in $N$]\label{rmk:rateN2}  The constant $c_{\beta,N}$ in the lower bound in Eq. \eqref{ineq:lbd_main} suggests a potential lower bound rate of $N^{-\frac{2\beta}{2\beta+1}}$ when $N\to \infty$. This rate is determined by the exponent for $N$ in $c_{0,N}=C_0 N^{\frac{1}{2\beta+1}}$ in \eqref{eq:K}, which is the smallest possible. That is, when one replaces the the constant $c_{0,N}=C_0 N^{\frac{1}{2\beta+1}}$ in by $c_{0,N}=C_0 N^{\gamma}$, the exponent $\gamma$ must satisfy $\gamma\geq \frac{1}{2\beta+1}$. Such a constraint arises when we aim for $\alpha<\frac 1 8$ in Appendix \ref{sec:append2} for all $N$. We remark that a $1/\log(N)$ minimax bound is provided in \cite[Proposition 4.6, Theorem 5.1]{belomestny2023semiparametric} under a semiparametric setting.
\end{remark}

\appendix
\section{Technical results and proofs for the upper bound}\label{sec:append}

In subsection \ref{sec:proof_upperRate} and subsection \ref{sec:proof_left_tail_prob2}, we gather the remaining proofs of Lemma \ref{lemma:mmtBd_v2}, and Lemma \ref{thm:min-eigen_B} from Section \ref{sec:upperBd}. Subsection \ref{sec:Mat_Concen} collects the matrix concentration inequalities used throughout the analysis. In subsection \ref{subsec:case2}, we prove the upper minimax rate with a non-uniformly bounded basis.

\subsection{Proofs for the upper minimax rate}\label{sec:proof_upperRate}

This section presents technical proofs in Section \ref{Sec:UpbdMini}. 
First, we present the proof of Lemma \ref{lemma:mmtBd_v2}. Recall the fourth-moment bounds for the normal vectors:
\begin{subnumcases}{\label{Ineq:4MofAb_ap}}
    \Big(\E\big[\| ( \widebar{\bfA}_{n}^{M}-\widebar{\bfA}_{n}^\infty)\btheta^*_n\|_{\R^n}^4\big]\Big)^{\frac 12}  \leq C_A \frac n M \,, \label{Ineq:4MofA_ap} \\ 
		\Big(\E\big[\| \widebar{\bfb}_{n}^{M}-\widebar{\bfb}_{n}^\infty\|_{\R^n}^4\big]\Big)^{\frac 12} \leq C_b \frac n M \,, \label{Ineq:4Mofb_ap}
\end{subnumcases}
for some constants $C_A$ and $C_b $ independent of $n$ and $M$. 
Here and throughout the proof, it is unnecessary to distinguish between $\E$ and $\E_{\phi_*}$. Therefore, we simplify the notation by using $\E$ exclusively.

\smallskip

\begin{proof}[Proof of Lemma \ref{lemma:mmtBd_v2}] We prove these bounds by applying the fourth-moment bounds for mean in Lemma \ref{lemma:4th_mmt_empirical_mean}. To do so, we only need to show that both $\widebar{\bfA}_{n}^{M}\btheta^*_n -\widebar{\bfA}_{n}^\infty\btheta^*_n$ and $\widebar{\bfb}_{n}^{M}-\widebar{\bfb}_{n}^\infty$ are centered empirical means of two random vectors, each of which random vector has bounded fourth-moment.  

{\bf Part I: the normal matrix term.} We start from $\widebar{\bfA}_{n}^{M}\btheta^*_n -\widebar{\bfA}_{n}^\infty\btheta^*_n$. 
Let $\phi_n^*= \sum_{k=1}^n \theta_k^* \psi_k$. Since $R_\phi[X]$ is linear in $\phi$, we have $R_{\phi_n^*}[X] = \sum_{k=1}^n \theta_k^*R_{\psi_k}[X]$ for any $X$. Define an $ \R^n$-valued random vector $Z_A$ to be 
\begin{equation}\label{eq:Za}
	Z_A(l) =  \frac 1 N \innerp{R_{\psi_l}[X], R_{\phi_n^*}[X]}_{\R^{Nd}}, \quad 1\leq l\leq n. 
\end{equation} 
Then, we can write the $ \R^n$-valued random variable $\widebar{\bfA}_{n}^{M}\btheta^*_n$ as 
\begin{align*}
	[\widebar{\bfA}_{n}^{M}]\btheta^*_n(l) &= \sum_{k=1}^n \frac{1}{MN}\sum_{m=1}^M\innerp{R_{\psi_l}[X^{m}], R_{\psi_k}[X^{m}]}_{\R^{Nd}} \theta_k^*  \\
	& =    \frac{1}{M}\sum_{m=1}^M \frac 1 N \innerp{R_{\psi_l}[X^{m}], R_{\phi_n^*}[X^{m}]}_{\R^{Nd}} = \frac{1}{M}\sum_{m=1}^M Z_A^m(l), 
\end{align*}
where $Z_A^m(l)= \frac 1 N \innerp{R_{\psi_l}[X^{m}], R_{\phi_n^*}[X^{m}]}_{\R^{Nd}} $ is a sample of $Z_A(l)$ for each $m$. Also, $\widebar{\bfA}_{n}^\infty\btheta^*_n (l) = \E[Z_A(l)]$ for each $l$ by the definition of $\widebar{\bfA}_{n}^\infty$. 

Meanwhile, note that by the definitions of $Z_A$ and $R_\phi[X]$, we have 
\begin{align}
	| Z_A(l)|& = \Big| \frac 1 N  \sum_{i=1}^N \frac{1}{(N-1)^2} \sum_{j\neq i} \sum_{j'\neq i}\phi_n^*(r_{ij})\psi_l(r_{ij'}) \innerp{\br_{ij}, \br_{ij'}}_{\R^d}  \Big| \nonumber \\
	&\leq \frac{1}{N(N-1)^2} \sum_{i=1}^N \sum_{j\neq i} \sum_{j'\neq i} |\phi_n^*(r_{ij})\psi_l(r_{ij'})|\,. \label{Ineq:Za}
\end{align}
We have shown that $\widebar{\bfA}_{n}^{M}\btheta^*_n -\widebar{\bfA}_{n}^\infty\btheta^*_n$ is the centered empirical mean of i.i.d.~samples of a random vector $Z_A$.  As a result, applying Lemma \ref{lemma:4th_mmt_empirical_mean} below, we obtain
\begin{align}\label{Ineq:Z_A_L^4}
	\E[ \| (\widebar{\bfA}_{n}^{M}-\widebar{\bfA}_{n}^\infty)\btheta^*_n \|_{\R^n}^4 ]&\leq \frac{96 n }{M^2}  \sum_{l=1}^n \E [|Z_A(l)|^4] \,.
\end{align}

To bound $\E [|Z_A(l)|^4]$, we consider two cases for different ranges of $\beta$.
\begin{enumerate}
\item[(i)] When $\beta>1/2$ and uniformly bounded basis condition in Assumption \ref{assum:bd-eigenfn} holds. Then, one can bound \eqref{Ineq:Za} as
	\begin{align*}
		\frac{1}{N(N-1)^2} \sum_{i=1}^N \sum_{j\neq i} \sum_{j'\neq i} |\phi_n^*(r_{ij})\psi_l(r_{ij'})|\leq \|\phi_n^*\|_\infty \sup_{l\geq 1}\|\psi_l\|_\infty \leq C_{\max} \|\phi_n^*\|_\infty\,.
	\end{align*}
	Also, note that
	\begin{align*}
		\|\phi_n^*\|_\infty &\leq C_{\max} \sum_{k=1}^n|\theta_k| \leq C_{\max} \left(\sum_{k=1}^n k^{2\beta}|\theta_k|^2 \right)^{1/2} \left(\sum_{k=1}^nk^{-2\beta}\right)^{1/2} \leq C_{\max}  C_{\beta}L{/\pi^\beta}
	\end{align*}
	with $C_{\beta}=(\sum\nolimits_{k=1}^{\infty}k^{-2\beta})^{1/2}\leq \frac{2\beta}{2\beta-1}$. Thus, we get $| Z_A(l)| \leq C_{\max}^2  C_{\beta}L{/\pi^\beta}$. Plugging it back to \eqref{Ineq:Z_A_L^4}, we have
	\begin{align*}
		\E[ \| (\widebar{\bfA}_{n}^{M}-\widebar{\bfA}_{n}^\infty)\btheta^*_n \|_{\R^n}^4 ]\leq \frac{96 n^2}{M^2}    C_{\beta}^4 C_{\max}^8 L^4{/\pi^{4\beta}}\,.
	\end{align*}
	Taking the square root,  we obtain \eqref{Ineq:4MofA_ap} with $C_A=\sqrt{96} C_{\beta}^2C_{\max}^4 L^2{/\pi^{2\beta}}$.
	
\item[(ii)] When $\beta\geq 1/4$ and uniformly bounded basis condition in Assumption \ref{assum:bd-eigenfn} and the Sobolev embedding condition in Assumption \ref{assum:SoboEmb} hold. We have	
\begin{align*}
	| Z_A(l)|& = \Big| \frac 1 N  \sum_{i=1}^N \frac{1}{(N-1)^2} \sum_{j\neq i} \sum_{j'\neq i}\phi_n^*(r_{ij})\psi_l(r_{ij'}) \innerp{\br_{ij}, \br_{ij'}}_{\R^d}  \Big| \\
	 &\leq   \frac{1}{N(N-1)}  \sum_{i=1}^N  \sum_{j\neq i}\Big| \phi_n^*(r_{ij})   \Big| \sup_{l\geq 1}\|\psi_l\|_\infty \leq \frac{C_{\max}}{N(N-1)}  \sum_{i=1}^N  \sum_{j\neq i} | \phi_n^*(r_{ij})|\,.
\end{align*}
Applying Jensen's inequality and the Sobolev embedding condition in Assumption \ref{assum:SoboEmb}, we get
\begin{align*}
	\E [\|Z_A\|_{\ell^4}^4]&=\sum_{l=1}^n \E [|Z_A(l)|^4] \leq \sum_{l=1}^n \E\bigg[\Big| \frac{C_{\max}}{N(N-1)}  \sum_{i=1}^N  \sum_{j\neq i} | \phi_n^*(r_{ij})|\Big|^4 \bigg] \\
	&\leq \frac{C_{\max}^4 n}{N(N-1)} \sum_{i=1}^N  \sum_{j\neq i} \E[| \phi_n^*(r_{ij})|^4] \leq C_{\max}^4 n \|\phi_n^*\|_{L_{\rho}^4}^4 \leq \calK_{\beta}C_{\max}^4  nL^4/{\pi^{4\beta}}\,.
\end{align*}
Thus, Eq. \eqref{Ineq:Z_A_L^4} implies
\begin{align*}
	\E[ \| (\widebar{\bfA}_{n}^{M}-\widebar{\bfA}_{n}^\infty)\btheta^*_n \|_{\R^n}^4 ]\leq  \frac{96 n^2}{M^2} \calK_{\beta} C_{\max}^4 L^4  /{\pi^{4\beta}}\,.
\end{align*}
Taking the square root,  we obtain \eqref{Ineq:4MofA_ap} with $C_A=\sqrt{96\calK_{\beta}}C_{\max}^2 L^2 /{\pi^{2\beta}}$.
\end{enumerate}

{\bf Part II: the normal vector term. }
The proof for the bound in \eqref{Ineq:4Mofb_ap} is similar. 
By definition, the normal vector $\widebar{\bfb}_{n}^{M} =\frac{1}{M}\sum_{m=1}^M \bfb_n^m$ is the average of $M$ samples $\{\bfb_n^m\}_{m=1}^M$ of the $\R^n$-value random vector $\bfb_{n}$ with entries 
\[
\bfb_{n}(l)= \frac{1}{N} \innerp{R_{\psi_l}[X], R_{\phi_{*}}[X] + \boldeta}_{\R^{Nd}  }, \quad 1\leq l\leq n. 
\]
To show that $\bfb_n$ has a bounded fourth-moment, we decompose it into a bounded part and an unbounded part, $\bfb_{n} = \xi  + \tilde\eta$, where  
\begin{align*}	
\xi(l) = \frac{1}{N} \innerp{R_{\psi_l}[X], R_{\phi_{*}}[X]}_{\R^{Nd}}, \quad 
\widetilde{\eta} (l) = \frac{1}{N} \innerp{R_{\psi_l}[X], \boldeta}_{\R^{Nd}}. 
\end{align*}  
To bound the noise term, we use the Cauchy-Schwarz inequality,
\begin{align*}
\E[ | \widetilde{\eta} (l)|^4 ]
&= 
\frac{1}{N^4}\E[|\innerp{R_{\psi_l}[X], \boldeta}_{\R^{Nd}}|^4]
\leq \frac{1}{N^4}\E[\|R_{\psi_l}[X]\|^4_{\R^{Nd}} \|\boldeta\|^4_{\R^{Nd}}] 
\le \frac{1}{N^2}C_{\max}^4C_{\eta},
\end{align*}
where the first inequality follows from the Cauchy-Schwarz inequality, and the last inequality follows from the assumption that the fourth moment of $\boldeta$ is bounded by some constant $C_{\eta}>0$ and that $\|R_{\psi_l}[X]\|^4_{\R^{Nd}}\le N^2 C_{\max}^4$ for all $X$.

We obtain the bounds for the fourth moment of the random vector $\xi$ by the boundedness of the basis functions in Assumption \ref{assum:bd-eigenfn} and the Sobolev embedding condition in Assumption \ref{assum:SoboEmb}. The argument is the same as for $Z_A(l)$ if we replace $\phi_n^*$ in Eq. \eqref{eq:Za} by $\phi^*$. Again, we split the rest of the proof into two parts with regard to the range of $\beta$.
\begin{enumerate}
	\item[(i)] If $\beta>1/2$, assume that the uniformly bounded basis condition in Assumption \ref{assum:bd-eigenfn} holds. Then, one has
	\begin{align*}	
	| \xi(l)| & \leq \|\phi^*\|_\infty  \sup_{k\geq 1}\|\psi_k\|_\infty^2 \leq  C_{\max} \|\phi^*\|_\infty \,. 
	\end{align*}
Also, note that $\|\phi_n^*\|_\infty \leq C_{\max}  C_{\beta}L/{\pi^{\beta}}$. Thus, we have $| \xi(l)|\leq C_{\beta}C_{\max}^2L/{\pi^{\beta}}$. Combining these bounds, we have, for $1\leq l\leq n$, 
\begin{align*}
	\E[|\bfb_n(l)|^4] &\leq \E\Big[|\xi(l) +  \widetilde{\eta} (l)|^4\Big]\leq 2^3 \E\Big[ |\xi(l)|^4  + |\widetilde{\eta} (l)|^4\Big] \\ 
	&\leq 2^3  \Big[ C_{\beta}^4 C_{\max}^8 L^4 /{\pi^{4\beta}}  + \frac{1}{N^2}C_{\max}^4C_{\eta}\Big] \leq 2^3 C_{\max}^4 \Big[C_{\beta}^4C_{\max}^4 L^4/{\pi^{4\beta}} + \frac{1}{N^2}C_{\eta} \Big]. 
\end{align*}
Consequently, applying the following Lemma \ref{lemma:4th_mmt_empirical_mean} with $Z_m(k) = \frac{1}{N}\innerp{R_{\psi_k}[X^{m}], Y^{m}}_{\R^{Nd}}$, we obtain
\begin{align*}
	\E[\|\widebar{\bfb}_{n}^{M}-\widebar{\bfb}_{n}^\infty\|_{\R^n}^4]&\leq \frac{768 n^2 }{M^2} C_{\max}^4 \left[C_{\beta}^4 C_{\max}^4 L^4/{\pi^{4\beta}}  + \frac{1}{N^2}C_{\eta} \right] 
	\leq C_b^2\frac{n^2}{M^2} 
\end{align*}
with $C_b = \sqrt{768} C_{\max}^2(C_{\beta}^4C_{\max}^4L^4/{\pi^{4\beta}} +\frac{1}{N^2}C_{\eta} )^{1/2}$ as shown in \eqref{Ineq:4Mofb_ap}.
\item[(ii)] If $\beta\geq 1/4$, assume that the uniformly bounded basis condition in Assumption \ref{assum:bd-eigenfn} and the Sobolev embedding condition in Assumption \ref{assum:SoboEmb} hold.  
	Employing the condition \eqref{Assu:suff_L4}, we have the following estimates:
\begin{align*}
	| \xi(l)|\leq \frac{C_{\max}}{N(N-1)}  \sum_{i=1}^N  \sum_{j\neq i} | \phi^*(r_{ij})\,, \text{ and }\quad \E[| \xi(l)|^4] \leq \calK_{\beta}C_{\max}^4 L^4/{\pi^{4\beta}}\,.
\end{align*}
Combining these bounds, we have, for $1\leq l\leq n$, 
\begin{align*}
	\E[|\bfb_n(l)|^4] &\leq \E[|\xi(l) +  \widetilde{\eta} (l)|^4]\leq 2^3 \E\Big[ |\xi(l)|^4  + |\widetilde{\eta} (l)|^4\Big] \\ 
	&\leq 2^3 \Big[\calK_{\beta}L^4 C_{\max}^4  + \frac{1}{N^2}C_{\max}^4C_{\eta}\Big] \leq 2^3 C_{\max}^4 \left[\calK_{\beta}L^4/{\pi^{4\beta}}  + \frac{1}{N^2}C_{\eta} \right]. 
\end{align*}
Consequently, applying the following Lemma \ref{lemma:4th_mmt_empirical_mean} with $Z_m(l) = \frac{1}{N}\innerp{R_{\psi_l}[X^{m}], Y^{m}}_{\R^{Nd}}$, we obtain
\begin{align*}
	\E[\|\widebar{\bfb}_{n}^{M}-\widebar{\bfb}_{n}^\infty\|_{\R^n}^4]&\leq \frac{768 n^2}{M^2} C_{\max}^4 \left[{\calK_{\beta}}L^4/{\pi^{4\beta}}   + \frac{1}{N^2}C_{\eta} \right] 
	\leq C_b^2\frac{n^2}{M^2} 
\end{align*}
with $C_b = \sqrt{768} C_{\max}^2({\calK_{\beta}}L^4/{\pi^{4\beta}} +\frac{1}{N^2}C_{\eta} )^{1/2}$ as shown in \eqref{Ineq:4Mofb_ap}.
\end{enumerate}

In conclusion, we complete the proof of Lemma \ref{lemma:mmtBd_v2}.
\end{proof}
\smallskip

The next lemma provides bounds for the fourth moment of the empirical mean of i.i.d. samples. The proof follows from applying the independence between the samples and the direct expansion of the fourth power of the sum.   
\begin{lemma}[Fourth-moment bounds of empirical mean]\label{lemma:4th_mmt_empirical_mean}
Let $\{Z_{m}\}_{m=1}^M$ be i.i.d. samples of the $R^n$-valued random variable $Z = ( Z(1),\ldots,Z(n))$  with bounded fourth moment.  Then we have  
\[
\E\bigg[\Big\| \frac{1}{M}\sum_{m=1}^M (Z_{m} - \E[Z]) \Big\|^4_{\R^{n}} \bigg] \leq \frac{6n}{M^2} \sum_{k=1}^n \E[|Z(k)-\E[Z(k)]|^4]\leq \frac{96 n}{M^2}   \sum_{k=1}^n \E [|Z(k)|^4] \,. 
\]	
\end{lemma}
\begin{proof}
The second inequality follows directly from 
\[
 \E [| Z(k) - \E[Z(k) ]|^4] \leq 2^3 \left( \E [|Z(k) |^4] + \E [|\E[Z(k) ]|^4]\right) \leq 2^4 \E [|Z(k) |^4]
\]
for each $1\leq k\leq n$ since $\E [|\E[Z(k) ]|^4]\leq \E[|Z(k) |^4]$ by Jensen's inequality. 
 
To prove the first inequality, it suffices to consider $\E[Z]=0$ and prove  
 \begin{equation}\label{eq:4th_mmt_mean0}
     \E\left[\bigg\| \frac{1}{M}\sum_{m=1}^M Z_{m} \bigg\|^4_{\R^{n}}\right] \leq \frac{6n}{M^2}  \sum_{k=1}^n \E[|Z(k)|^4]. 
 \end{equation}
 
We first prove the case with $n=1$, then extend it to the case with $n>1$. 

{\bf Case $n=1$: $Z$ is a 1-dimensional random variable.} Note that 
  	\begin{align}
	\Big|\sum_{m=1}^M Z_m\Big|^4&=\sum_{m_1,\cdots,m_4=1}^M \prod_{i=1}^4 Z_{m_i}\nonumber \\
	&=\sum_{m=1}^M Z_{m}^4+4\sum_{\substack{m_1,m_2=1 \\ m_1\neq m_2} }^M Z_{m_1} Z_{m_2}^3 +6\sum_{\substack{m_1,m_2=1 \\ m_1\neq m_2} }^M Z_{m_1}^2 Z_{m_2}^2 \nonumber \\
	&+6\sum_{\substack{m_1,m_2,m_3=1 \\ m_1\neq m_2\neq m_3} }^M Z_{m_1}^2 Z_{m_2} Z_{m_3} 
	+\sum_{\substack{m_1,m_2,m_3,m_4=1 \\ m_1\neq m_2\neq m_3\neq m_4} }^M Z_{m_1} Z_{m_2} Z_{m_3} Z_{m_4}\,. \nonumber
\end{align}
Meanwhile, the independence between these mean zero samples implies that $\E[Z_{m_1} Z_{m_2}^3]=0$, $\E [Z_{m_1}^2 Z_{m_2} Z_{m_3}]=0$, and $\E[Z_{m_1} Z_{m_2} Z_{m_3} Z_{m_4}]=0$, for any mutually different indices $1\leq m_1,m_2,m_3,m_4\leq M$. Then, the desired inequality in \eqref{eq:4th_mmt_mean0} with $n=1$ follows from
\begin{align*}
\E\left[ \Big|\sum_{m=1}^M Z_m\Big|^4\right] & = \E \left[\sum_{m=1}^M Z_{m}^4 + 6\sum_{\substack{m_1,m_2=1 \\ m_1\neq m_2} }^M Z_{m_1}^2 Z_{m_2}^2 \right] \\
& = M \E[|Z|^4] + 6M(M-1) (\E[|Z^2|])^2 	\leq  6M^2  \E[|Z|^4], 
\end{align*} 
where the second equality follows from that $\{Z_m\}_{m=1}^M$ being i.i.d. samples of $Z$, and the last inequality follows from $ (\E[|Z^2|])^2\leq \E[|Z|^4]$ by Jensen's inequality.  

{\bf Case $n>1$: $Z$ is a random vector.} We prove it by applying the above bound to each component of the vector. Note that 
\[
\Big\|\frac{1}{M}\sum_{m=1}^M Z_m\Big\|^4_{\R^{n}} = \left( \sum_{k=1}^n\Big|\frac{1}{M}\sum_{m=1}^M  Z_m(k)\Big|^2\right)^2 \leq n  \sum_{k=1}^n\Big|\frac{1}{M}\sum_{m=1}^M  Z_m(k)\Big|^4.
\]
Meanwhile, applying the result in Case $n=1$ to each component $Z_m(k)$, we have 
\[
  \E\left[ \bigg| \frac{1}{M}\sum_{m=1}^M Z_{m}(k) \bigg|^4 \right] \leq \frac{6}{M^2}  \E[|Z(k)|^4], \quad \forall 1\leq k\leq n. 
\]
Combining the two inequalities, we obtain the inequality \eqref{eq:4th_mmt_mean0}.
\end{proof}

\subsection{Matrix concentration inequalities}\label{sec:Mat_Concen}
We recall two concentration inequalities in the theory of random matrix and Weyl's inequality, which can be found in \cite{Vershynin2018,Tropp2015}. They are used to prove the left tail probability estimates in Section {\rm \ref{sec:upperBd}}.  

\begin{theorem}[Matrix Chernoff inequality]\label{thm:matChernoff}
	Let $\{X_i\}_{i=1}^M\subset \R^{n\times n}$ be independent random Hermitian matrices. Assume that $0\leq \lambda_{\min}(X_i)$ and $\lambda_{\max}(X_i)\leq K$ almost surely for all $i$. Define the minimum eigenvalue $\mu_{\min}=\lambda_{\min}\Big(\sum_{i=1}^M \E[X_i] \Big)$.	Then,  we have 
	\[
	\P\left(\lambda_{\min}\Big(\sum_{i=1}^M X_i\Big)\leq (1-t)\mu_{\min} \right) \leq n \bigg[ \frac{e^{-t}}{(1-t)^{1-t}} \bigg]^{\mu_{\min}/K}, 
	\]
	for every $t\in[0,1)$. 
\end{theorem}

\begin{theorem}[Matrix Bernstein inequality]\label{thm:matBerstein}
	Let $\{X_i\}_{i=1}^M\subset \R^{n\times n}$ be independent mean zero symmetric random matrices such that $\|X_i\|_{\textup{op}}\leq K$ almost surely for all $i$. Then, for every $t\geq 0$, we have 
	\[
	\P\left(\Big\|\sum_{i=1}^M X_i\Big\|_{\textup{op}}\geq t \right) \leq 2n \exp\left( -\frac{t^2/2}{\sigma^2+Kt/3}\right), 
	\]
	where $\sigma^2= \|\sum_{i=1}^M \E[X_i^2]\|_{\textup{op}}$ is the matrix variance norm of the sum and $\|\cdot\|_{\textup{op}}$ represents the operator norm of a matrix. 
\end{theorem}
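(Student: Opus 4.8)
\medskip
\noindent\textbf{Proof proposal.} This is the classical matrix Bernstein inequality, and the plan is to follow the matrix Laplace transform (Ahlswede--Winter--Tropp) method. Writing $S=\sum_{i=1}^M X_i$, the first reduction is from the operator-norm tail to one-sided tails: since $\|S\|_{\textup{op}}\geq t$ precisely when $\lambda_{\max}(S)\geq t$ or $\lambda_{\max}(-S)\geq t$, a union bound leaves two symmetric subproblems, each a bound on $\P(\lambda_{\max}(\cdot)\geq t)$, and the extra factor $2$ in front of $n$ originates here. For each subproblem I would apply the matrix Markov/Chernoff step: for $\theta>0$, $\P(\lambda_{\max}(S)\geq t)\leq e^{-\theta t}\,\E[\mathrm{tr}\, e^{\theta S}]$, using $e^{\theta\lambda_{\max}(S)}\leq \mathrm{tr}\, e^{\theta S}$.

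The heart of the argument is controlling the trace moment generating function $\E[\mathrm{tr}\, e^{\theta S}]$. Here I would invoke Lieb's concavity theorem --- that $A\mapsto\mathrm{tr}\exp(H+\log A)$ is concave on positive definite matrices --- which, together with Jensen's inequality and independence of the $X_i$, yields the subadditivity bound $\E[\mathrm{tr}\, e^{\theta S}]\leq \mathrm{tr}\exp\big(\sum_{i=1}^M \log\E[e^{\theta X_i}]\big)$. Next, for $0<\theta<3/K$, I would bound each summand's cumulant: using $\E[X_i]=0$, $\|X_i\|_{\textup{op}}\leq K$, and the scalar estimate $e^x\leq 1+x+\tfrac{x^2/2}{1-|x|/3}$ (valid for $|x|<3$) applied through the functional calculus, one gets $\E[e^{\theta X_i}]\preceq I+g(\theta)\E[X_i^2]$ with $g(\theta)=\tfrac{\theta^2/2}{1-K\theta/3}$; operator monotonicity of $\log$ and $\log(I+A)\preceq A$ then give $\log\E[e^{\theta X_i}]\preceq g(\theta)\E[X_i^2]$. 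Summing and taking the largest eigenvalue gives $\lambda_{\max}\big(\sum_{i=1}^M \log\E[e^{\theta X_i}]\big)\leq g(\theta)\sigma^2$, hence $\mathrm{tr}\exp\big(\sum_{i=1}^M \log\E[e^{\theta X_i}]\big)\leq n\, e^{g(\theta)\sigma^2}$.

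Assembling these estimates yields $\P(\lambda_{\max}(S)\geq t)\leq n\exp\big(-\theta t+g(\theta)\sigma^2\big)$ for every $0<\theta<3/K$, and the last step is to optimize the free parameter: taking $\theta=t/(\sigma^2+Kt/3)$ makes the exponent exactly $-\tfrac{t^2/2}{\sigma^2+Kt/3}$. Adding the two one-sided bounds produces the stated inequality with prefactor $2n$. The main obstacle --- indeed the only non-elementary ingredient --- is the subadditivity step, which rests on Lieb's concavity theorem. A cheaper route through the Golden--Thompson inequality (the original Ahlswede--Winter argument) is elementary but replaces the sharp variance proxy $\sigma^2=\|\sum_{i=1}^M \E[X_i^2]\|_{\textup{op}}$ by the larger $\sum_{i=1}^M \|\E[X_i^2]\|_{\textup{op}}$, so recovering the precise form stated here genuinely requires Lieb's theorem.
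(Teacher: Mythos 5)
Your proposal is correct: it is exactly the standard matrix Laplace transform argument (union bound over $\pm S$, Lieb's concavity theorem for the subadditivity of the trace MGF, the cumulant bound $\log\E[e^{\theta X_i}]\preceq \frac{\theta^2/2}{1-K\theta/3}\,\E[X_i^2]$, and optimization at $\theta=t/(\sigma^2+Kt/3)$), and your closing remark about Golden--Thompson versus Lieb is also accurate. The paper itself offers no proof of this theorem --- it is recalled verbatim from the cited references \cite{Vershynin2018,Tropp2015} --- and your argument coincides with the proof given there, so there is nothing to reconcile.
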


\begin{theorem}[Weyl's inequality]\label{thm:weyl}
For any symmetric matrices $S\in \R^{M\times M}$ and $T\in \R^{M\times M}$, we have 
\[\max_{1\leq i\leq M} |\lambda_i(S) - \lambda_i(T)|\leq \|S-T\|_{\textup{op}},  
\] 
where $\lambda_i(S)$ is the $i$-th eigenvalue of $S$ in descending order. 
\end{theorem}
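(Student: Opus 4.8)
Since this is the classical Weyl perturbation inequality, the plan is simply to deduce it from the Courant--Fischer min--max characterization of eigenvalues, which in turn follows from the spectral theorem for symmetric matrices. Concretely, for a symmetric $A\in\R^{n\times n}$ with eigenvalues $\lambda_1(A)\ge\cdots\ge\lambda_n(A)$ in descending order, I would use the identity
\[
\lambda_i(A)=\max_{\substack{V\subseteq\R^n\\ \dim V=i}}\ \min_{\substack{x\in V\\ \norm{x}=1}}\ \innerp{Ax,x}_{\R^n},\qquad i=1,\ldots,n.
\]
If a self-contained argument is wanted, this is obtained by diagonalizing $A$, noting $\max_{\norm{x}=1}\innerp{Ax,x}_{\R^n}=\lambda_1(A)$ with the maximizer the top eigenvector, and then applying the same fact to the restriction of $A$ to suitable eigenspace complements; I would treat this as known background.

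Next I would write $E:=S-T$, so $S=T+E$, and record the elementary bound $|\innerp{Ex,x}_{\R^n}|\le\norm{E}_{\textup{op}}$ for every unit vector $x$, which is immediate from the definition of the operator norm together with symmetry of $E$. Hence for every unit vector $x$,
\[
\innerp{Tx,x}_{\R^n}-\norm{S-T}_{\textup{op}}\ \le\ \innerp{Sx,x}_{\R^n}\ \le\ \innerp{Tx,x}_{\R^n}+\norm{S-T}_{\textup{op}}.
\]

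Finally, I would fix $i$ and substitute these two pointwise estimates into the min--max formula for $\lambda_i(S)$: since adding the constant $\pm\norm{S-T}_{\textup{op}}$ commutes with taking the minimum over unit vectors in a fixed $i$-dimensional subspace $V$ and with taking the maximum over all such $V$, one obtains $\lambda_i(T)-\norm{S-T}_{\textup{op}}\le\lambda_i(S)\le\lambda_i(T)+\norm{S-T}_{\textup{op}}$, i.e. $|\lambda_i(S)-\lambda_i(T)|\le\norm{S-T}_{\textup{op}}$; taking the maximum over $i\in\{1,\ldots,n\}$ finishes the proof. There is no genuine obstacle here — the only point requiring care is matching the descending-order indexing convention on both matrices in the variational formula, so that the same subspace family is used when comparing $\lambda_i(S)$ with $\lambda_i(T)$.
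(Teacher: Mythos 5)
Your proof is correct: the Courant--Fischer min--max argument with the pointwise bound $|\innerp{(S-T)x,x}|\le\|S-T\|_{\textup{op}}$ is exactly the standard derivation of Weyl's perturbation inequality. The paper itself offers no proof of this statement --- it is recalled as known background and cited to Vershynin and Tropp --- and your argument is essentially the one found in those references, so there is nothing to reconcile beyond the indexing convention you already handled.
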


Next, we include the proof of Bernstein left tail probability of the smallest eigenvalue. 

\begin{proof}[Proof of Lemma \ref{thm:min-eigen_B}]
	The proof follows from the matrix Bernstein inequality \cite{Vershynin2018,Tropp2015}, which we recall in Theorem \ref{thm:matBerstein}. Note that Lemma \ref{lemma:projEst} implies
	\begin{equation}\label{Eq:Coer_Mat}
		\lambda_{\min}(\bar{\bfA}_{n}^{\infty})=\inf_{\btheta\in S^{n-1}}\btheta^\top \bar{\bfA}_{n}^{\infty}\btheta= \inf_{\btheta\in S^{n-1}} \frac{1}{N}\E[\|R_{\phi_{\btheta}}[X]\|_{\R^{Nd}}^2]\geq c_{\bar \calL}>0\,.
	\end{equation} 	
	We denote $\bPhi^m =(R_{\psi_1}[X^{m}],\ldots,R_{\psi_n}[X^{m}])$ for each sample $X^m$ and thus $\bar{\bfA}_{n}^{M}=\frac{1}{MN}\sum_{m=1}^M [\bPhi^m]^\top \bPhi^m$. Also, we define
	\begin{align*}
		\widebar{Q}_{M,N}=\bar{\bfA}_{n}^{M}-\bar{\bfA}_{n}^{\infty}=\frac{1}{MN}\sum\nolimits_{m=1}^M \Big[ [\bPhi^m]^\top \bPhi^m-\E[[\bPhi^m]^\top \bPhi^m] \Big] \,, 
	\end{align*}
	where $\{Q^{m}=\frac 1N[\bPhi^m]^\top \bPhi^m-\frac 1N\E[[\bPhi^m]^\top \bPhi^m]\}_{m=1}^M$ form a sequence of mean zero independent matrices. 
 Note that $\lambda_{\max}(Q^m)=\|Q^m\|_{\textup{op}}\leq 2nC_{\max}^2$ and $\sigma^2=\|\sum_{m=1}^M \E[(Q^m)^2]\|_{\textup{op}}\leq 2(n C_{\max}^2)^2$. Then the matrix Bernstein inequality (Theorem \ref{thm:matBerstein}) gives that
	\begin{align*}
		\P\{\norm{\widebar{Q}_{M,N}}_{\textup{op}}\geq t  \}\leq 2n\exp\rbracket{-\frac{M t^2/4}{(n C_{\max}^2)^2+nC_{\max}^2t /3}}\,,
	\end{align*}
	for any $t \leq c_{\bar\calL}$. So, by \eqref{Eq:Coer_Mat} and then the Weyl's  inequality (Theorem \ref{thm:weyl}) we have
	\begin{align*}
		\P\left\{\lambda_{\min}(\bar{\bfA}_{n}^{M}) \leq c_{\bar\calL}-\varepsilon c_{\bar\calL} \right\} &\leq \P\left\{|\lambda_{\min}(\bar{\bfA}_{n}^{M})- \lambda_{\min}(\bar{\bfA}_{n}^{\infty})|\geq \varepsilon c_{\bar\calL}\right\} \leq \P\left\{\norm{\widebar{Q}_{M,N}}_{\textup{op}}\geq \varepsilon c_{\bar\calL}\right\}\,.
	\end{align*}
	Thus, we have 
	\begin{equation}\label{Eq:Low_Min_est_B}
		\P\left\{ \lambda_{\min}(\bar{\bfA}_{n}^{M}) \leq (1-\varepsilon) c_{\bar\calL} \right\}\leq 2n \exp\rbracket{-\frac{M\varepsilon^2c_{\bar\calL}^2/4}{(n C_{\max}^2)^2+nC_{\max}^2\varepsilon c_{\bar\calL}/3}}\,,
	\end{equation}
	for any $\varepsilon\in (0,1)$. The inequality \eqref{Eq:Low_Min_est_B2} follows by taking $\varepsilon=\frac{3}{4}$ in Eq. \eqref{Eq:Low_Min_est_B}.
\end{proof}

\smallskip

Compared to the Chernoff left-tail bound, the Bernstein left-tail bound is weaker for our purposes, as it requires controlling $\sigma^2=\left\| \sum_{m=1}^M \mathbb{E}[(Q^m)^2] \right\|_{\textup{op}}$ rather than simply bounding $\lambda_{\max}(Q^m) = \|Q^m\|_{\textup{op}}$.
\subsection{Proof for the PAC-Bayesian left tail probability}\label{sec:proof_left_tail_prob2}
This section includes the technical proofs involved with the PAC-Bayesian left tail probability in Section \ref{sec:prob_eig_bd}. We present a technical proof of the Lemma \ref{thm:min-eigen} and some auxiliary lemmas under uniform boundedness Assumption \ref{assum:bd-eigenfn} on the basis functions and the fourth-moment Assumption \ref{assum:L2L4}.

Specifically, we want to show that the left tail probabilities of the smallest eigenvalue: for any $\varepsilon\in(0,1)$
 \begin{equation}\label{Eq:Low_Min_est_ap}
     \P\left\{\lambda_{\min}(\bar{\bfA}_{n}^{M}) \leq \frac{1-\varepsilon}{2}c_{\bar\calL} \right\} \leq  \exp\rbracket{n\log\left(\frac{5C_{\max}^2}{c_{\bar\calL}}\right)-{\frac{\varepsilon^2 M c_{\bar\calL}^2}{16 \kappa N^2}}} \,,
 \end{equation}
 where {$M\geq \frac{16 \kappa N^2}{c_{\bar\calL}^2}\log\left(\frac{5C_{\max}^2}{c_{\bar\calL}}\right) \cdot \frac{n}{\varepsilon^2}$} and $n\geq 2$. In particular, letting $\varepsilon=\frac{1}{2}$, we have
 \begin{equation}\label{Eq:Low_Min_est2_ap}
     \P\left\{\lambda_{\min}(\bar{\bfA}_{n}^{M}) \leq \frac{c_{\bar\calL}}{4} \right\} \leq  \exp\rbracket{n\log\left(\frac{5C_{\max}^2}{c_{\bar\calL}}\right)-{\frac{ M c_{\bar\calL}^2}{64 \kappa N^2}}} \,.
 \end{equation}

We split the proof into three steps:
\begin{enumerate}[leftmargin=1.5cm]
	\item[Step 1:] Construct from $\btheta^\top \bar{\bfA}_{n}^{M}\btheta
=\frac{1}{MN}\sum_{m=1}^M \|R_{\phi_{\btheta}}[X^m]\|_{\R^{Nd}}^2$ an empirical process with uniformly bounded moment generating function, and apply the PAC-Bayes inequality that we recall in Lemma \ref{lemma:PAC_Bay}.  
	\item[Step 2:] Obtain a parametric lower bound for $\lambda_{\min}(\bar{\bfA}_{n}^{M})$ via controls of the approximation and entropy terms in the PAC-Bayes inequality. 
	\item[Step 3:] Select the parameter properly to achieve the desired bound for the probability of the smallest eigenvalue being below the threshold. 
\end{enumerate}

We begin by introducing our primary tool to prove Lemma \ref{thm:min-eigen}, the PAC-Bayes inequality (see, e.g., \cite{Zhivotovskiy20,Mourtada2022,Oliveira2016}).
\begin{lemma}[PAC-Bayes inequality]\label{lemma:PAC_Bay}
	Let $\Theta$ be a measurable space, and $\{Z(\theta):\theta\in \Theta\}$ be a real-valued measurable process. Assume that 
	\begin{equation}\label{Eq:PAC_Assu}
		\E[\exp(Z(\theta))]\leq 1\,,\quad \text{for every } \theta\in\Theta\,.
	\end{equation}
	Let $\pi$ be a (prior) probability measure on $\Theta$. Then, for $t>0$
	\begin{align}\label{Eq:PAC_Ineq}
		\P\left\{\forall \mu, \int_{\Theta} Z(\theta) \mu(\theta)\leq \textup{KL}(\mu,\pi)+t\right\}\geq 1-e^{-t}\,,
	\end{align}
	where $\mu$ spans all (posterior) probability measures on $\Theta$, and $\textup{KL}(\mu,\pi)$ is the Kullback-Leibler divergence between $\mu$ and $\pi$:
	\begin{align*}
		\textup{KL}(\mu,\pi):=\begin{cases}
			\int_{\Theta} \log\Big[\frac{d\mu}{d\pi}\Big] d\mu & \text{if }\mu \ll \pi \,; \\
			\infty & \text{otherwise}\,.
		\end{cases}
	\end{align*}
\end{lemma}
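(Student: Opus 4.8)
The plan is to combine Markov's inequality applied to a single auxiliary random variable with the Donsker--Varadhan (Gibbs) variational characterization of relative entropy; this is the standard route to PAC-Bayes bounds, and the hypotheses here are exactly tailored to it. First I would introduce the nonnegative random variable
\[
W := \int_\Theta \exp(Z(\theta))\, \pi(d\theta),
\]
and check, via Tonelli's theorem (using joint measurability of the process $\{Z(\theta)\}$), that $\E[W] = \int_\Theta \E[\exp(Z(\theta))]\,\pi(d\theta) \le 1$ by \eqref{Eq:PAC_Assu}. Markov's inequality then yields $\P\{W > e^{t}\} \le e^{-t}\E[W] \le e^{-t}$, so the event $\mathcal{B} := \{\log W \le t\}$ has $\P(\mathcal{B}) \ge 1 - e^{-t}$.

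The second ingredient is a purely deterministic inequality valid on all of $\Omega$: for every probability measure $\mu$ on $\Theta$,
\[
\int_\Theta Z(\theta)\, \mu(d\theta) \;\le\; \KL(\mu,\pi) + \log W.
\]
If $\mu \not\ll \pi$ this is trivial since $\KL(\mu,\pi)=\infty$. If $\mu \ll \pi$, write $f = d\mu/d\pi$ (which we may take strictly positive $\mu$-a.e.); then
\[
\int_\Theta Z\, d\mu - \KL(\mu,\pi) = \int_\Theta \log\!\Big(\tfrac{e^{Z}}{f}\Big)\, d\mu \;\le\; \log \int_\Theta \tfrac{e^{Z}}{f}\, d\mu = \log\int_\Theta e^{Z}\, d\pi = \log W,
\]
the single inequality being Jensen's inequality for the concave map $\log$ under the probability measure $\mu$; one also notes the left-hand integral is well defined in $[-\infty,\infty)$, which only helps.

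Finally I would splice the two together: on the event $\mathcal{B}$, simultaneously for every probability measure $\mu$,
\[
\int_\Theta Z(\theta)\, \mu(d\theta) \le \KL(\mu,\pi) + \log W \le \KL(\mu,\pi) + t,
\]
and since $\P(\mathcal{B}) \ge 1 - e^{-t}$ this is precisely \eqref{Eq:PAC_Ineq}. The one subtlety to flag — and the only place the argument could go wrong if handled carelessly — is that the universal quantifier over $\mu$ lives \emph{inside} the probability: this is legitimate precisely because the random quantity being controlled, $\log W$, does not depend on $\mu$, so a single good event $\mathcal{B}$ simultaneously validates the variational bound for all $\mu$. The remaining points (joint measurability for Tonelli, the convention $\KL(\mu,\pi)=\infty$ when $\mu\not\ll\pi$, and well-definedness of $\int Z\,d\mu$) are routine.
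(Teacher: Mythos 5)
Your proposal is correct, but note that the paper does not prove this lemma at all: it is quoted as a known tool from the cited references (Oliveira 2016; Mourtada--Vaškevičius--Zhivotovskiy 2022), so there is no in-paper argument to compare against. What you have written is the standard proof of the PAC-Bayes variational inequality and is essentially the argument in those references: set $W=\int_\Theta e^{Z(\theta)}\,\pi(d\theta)$, use Tonelli and \eqref{Eq:PAC_Assu} to get $\E[W]\le 1$, apply Markov to obtain $\P\{\log W> t\}\le e^{-t}$, and then invoke the change-of-measure (Donsker--Varadhan) bound $\int_\Theta Z\,d\mu\le \KL(\mu,\pi)+\log W$, which is deterministic and uniform in $\mu$ because $\log W$ does not depend on $\mu$ --- you correctly identify this as the point that legitimizes the quantifier ``$\forall\mu$'' inside the probability. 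One microscopic imprecision: in the Jensen step you write $\int_\Theta (e^{Z}/f)\,d\mu=\int_\Theta e^{Z}\,d\pi$, whereas in general $\int_\Theta (e^{Z}/f)\,d\mu=\int_{\{f>0\}} e^{Z}\,d\pi\le \int_\Theta e^{Z}\,d\pi=W$ when $\pi(\{f=0\})>0$; since the inequality goes the right way, the conclusion $\int_\Theta Z\,d\mu-\KL(\mu,\pi)\le\log W$ is unaffected. With that noted, the argument is complete and supplies a self-contained proof of a result the paper only cites.
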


The next lemma, from Section 2.3 in the supplement of \cite{Mourtada2022}, controls the approximate term in the application of PAC-inequality. Here, we present an alternative constructive proof.
\begin{lemma}\label{Lem:F_Sigma} 
	For every $\gamma\in(0,1/2]$, $v\in S^{n-1}$, i.e., $\|v\|_{\R^n}=1$, define 
	\begin{equation}\label{eq:prob_measures}
		\Theta_{v,\gamma}:=\{\theta\in S^{n-1}: \|\theta-v\|_{\R^n} \leq \gamma \}, \quad \text{  and } 	\pi_{v,\gamma}(d\theta):=\frac{\bf1_{\Theta_{v,\gamma}}(\theta)}{\pi(\Theta_{v,\gamma})}\pi(d\theta),
	\end{equation}
	where $\pi$ is a uniform measure on the sphere. That is, $\Theta_{v,\gamma}$ is a ``spherical cap'' or ``contact lens'' in $n$-th dimension space, and $\pi_{v,\gamma}$ is a uniform surface measure on the spherical cap. Then, 
\begin{align}\label{Eq:F_Sigma}
	F_{v,\gamma}(\Sigma):= \int_{S^{n-1}}\innerp{\Sigma \theta, \theta}_{\R^n} \pi_{v,\gamma}(d\theta)=[1-\gphi(\gamma)]\innerp{\Sigma v, v}_{\R^n}+\gphi(\gamma) \frac{\textup{Tr}(\Sigma)}{n}, 
\end{align} 
for any symmetric matrix $\Sigma\in \R^{n\times n}$, where 
\begin{equation}\label{Eq:phi_gam}
	\gphi(\gamma)= \frac{n}{n-1}\int_{S^{n-1}}[1-\innerp{\theta,v}_{\R^n}^2]\pi_{v,\gamma}(d\theta) \in \bigg[0,\frac{n \gamma^2}{(n-1)}\bigg]\,.
\end{equation}
\end{lemma}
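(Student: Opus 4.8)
The plan is to reduce the computation of $F_{v,\gamma}(\Sigma) = \int_{\Theta}\innerp{\Sigma\theta,\theta}\,\pi_{v,\gamma}(d\theta) = \mathrm{Tr}(\Sigma\, S_\gamma)$ to an explicit formula for the single matrix $S_\gamma := \int_{\Theta}\theta\theta^\top\,\pi_{v,\gamma}(d\theta)$, exploiting that the spherical cap $\Theta_{v,\gamma}$ is rotationally symmetric about its axis $v$. First I would parametrize $\theta\in\Theta_{v,\gamma}$ as $\theta = t\,v + \sqrt{1-t^2}\,w$ with $t=\innerp{\theta,v}\in[-1,1]$ and $w\in S^{n-2}:= S^{n-1}\cap v^{\perp}$; since $\norm{\theta-v}^2 = 2(1-t)$, the cap constraint $\norm{\theta-v}\le\gamma$ is exactly $t\in[1-\gamma^2/2,\,1]$, so $\gamma\le 1/2$ keeps $t$ near $1$. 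Invariance of $\pi_{v,\gamma}$ under rotations fixing $v$ (which act by $t\mapsto t$, $w\mapsto Uw$) shows that, under $\pi_{v,\gamma}$, the direction $w$ is uniform on $S^{n-2}$ and independent of $t$.

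Next I would expand $\theta\theta^\top = t^2\, vv^\top + t\sqrt{1-t^2}\,(vw^\top + wv^\top) + (1-t^2)\,ww^\top$ and integrate term by term. The cross terms drop out because $\E[w]=0$ on $S^{n-2}$, and $\E[ww^\top] = \tfrac1{n-1}(I-vv^\top)$ by symmetry, so $S_\gamma = \E[t^2]\,vv^\top + \tfrac{\E[1-t^2]}{n-1}(I-vv^\top)$, where $\E$ is over the law of $t$ induced by $\pi_{v,\gamma}$. Taking $\mathrm{Tr}(\Sigma\, S_\gamma)$ gives $F_{v,\gamma}(\Sigma) = \E[t^2]\innerp{\Sigma v,v} + \tfrac{\E[1-t^2]}{n-1}\bigl(\mathrm{Tr}(\Sigma) - \innerp{\Sigma v,v}\bigr)$. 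Setting $\gphi(\gamma):=\tfrac{n}{n-1}\E[1-t^2]$ — which is precisely $\tfrac{n}{n-1}\int_{\Theta}[1-\innerp{\theta,v}^2]\,\pi_{v,\gamma}(d\theta)$, i.e.\ the quantity in \eqref{Eq:phi_gam} — and using $\E[t^2]=1-\E[1-t^2]$, the coefficient of $\innerp{\Sigma v,v}$ collapses to $1-\gphi(\gamma)$ and that of $\mathrm{Tr}(\Sigma)$ to $\gphi(\gamma)/n$, which is \eqref{Eq:F_Sigma}. For the range in \eqref{Eq:phi_gam}: nonnegativity is immediate from $\innerp{\theta,v}^2\le\norm{\theta}^2\norm{v}^2=1$, while on the cap $0\le 1-t^2 = (1-t)(1+t)\le 2(1-t) = \norm{\theta-v}^2\le\gamma^2$, so $\E[1-t^2]\le\gamma^2$ and hence $\gphi(\gamma)\le n\gamma^2/(n-1)$.

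The only non-routine point is the disintegration claim in the first step — that the uniform surface measure on the cap, conditioned on the latitude $t$, is uniform on the corresponding copy of $S^{n-2}$. I would justify it by the $O(n-1)$-invariance argument above; alternatively, one can bypass it entirely, which I expect to be the cleanest route for a fully self-contained proof: $S_\gamma$ commutes with every $U\in O(n)$ fixing $v$ (by the change of variables $\theta\mapsto U\theta$ in the integral), hence $S_\gamma = a\,vv^\top + b(I-vv^\top)$ for scalars $a,b$; then $a = v^\top S_\gamma v = \int\innerp{\theta,v}^2\,\pi_{v,\gamma}(d\theta)$ and $a+(n-1)b = \mathrm{Tr}(S_\gamma)=\int\norm{\theta}^2\,\pi_{v,\gamma}(d\theta)=1$ determine $a$ and $b$, reproducing the same formula. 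Either way the arithmetic is elementary; the substance is merely organizing the rotational symmetry correctly.
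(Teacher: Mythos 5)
Your proposal is correct and takes essentially the same route as the paper: both reduce $F_{v,\gamma}(\Sigma)$ to the second-moment matrix $\int_\Theta \theta\theta^\top\,\pi_{v,\gamma}(d\theta)$, use rotational symmetry about the axis $v$ (the paper sets $v=e_1$ and computes the entries; your latitude--direction split, or the commutation argument $S_\gamma=a\,vv^\top+b(I-vv^\top)$, is the same symmetry reduction), and then pin down the two scalars via $\int_\Theta\|\theta\|^2\,\pi_{v,\gamma}(d\theta)=1$. A minor bonus on your side: you give a self-contained proof of the range $\gphi(\gamma)\in[0,n\gamma^2/(n-1)]$ from $1-t^2\le 2(1-t)=\|\theta-v\|^2\le\gamma^2$, whereas the paper defers that bound to the argument in Mourtada--Ga\"iffas.
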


\begin{proof}[Proof of Lemma \ref{Lem:F_Sigma}]
	Note that
\begin{align*}
	F_{v,\gamma}(\Sigma)&= \int_{\Theta}\innerp{\Sigma \theta, \theta}_{\R^n} \pi_{v,\gamma}(d\theta) = \int_{\Theta} \textup{Tr} [\theta^\top \Sigma\theta] \pi_{v,\gamma}(d\theta) \\
	&=\textup{Tr}[\Sigma A_{v,\gamma}]:=\textup{Tr}\sbracket{\Sigma \int_{\Theta} \theta \theta^\top \pi_{v,\gamma}(d\theta)}\,.
\end{align*}
To conclude \eqref{Eq:F_Sigma}, we proceed to show that
\begin{align}\label{Eq:A_{v,gam}}
	A_{v,\gamma} = [1-\gphi(\gamma)]vv^\top + \gphi(\gamma)\frac{I_n}{n}\,.
\end{align}

By isometric invariance, we set without loss of generality 
\begin{align*}
	v=e_1=(1,0,\cdots,0)\in\R^n\,.
\end{align*}
So we get a $\R^n$--``spherical cap'' $\Theta_\gamma=\Theta(e_1,\gamma)$ centered at $v=e_1$. The notations $A_{e_1,\gamma}$ and $\pi_{e_1,\gamma}(d\theta)$ are abbreviated as $A_{\gamma}$ and $\pi_{\gamma}(d\theta)$. Thus, for $\theta=(\theta_1,\theta_2,\cdots,\theta_n)\in \Theta_\gamma\subseteq S^{n-1}$ we have
\begin{align*}
	A_{\gamma}&=\int_{\Theta_\gamma} \theta \theta^\top \pi_{\gamma}(d\theta)=\int_{\Theta_\gamma} \begin{bmatrix}
		\theta_1^2 & \theta_1 \theta_2 &\cdots &\theta_1\theta_n \\
		\theta_2 \theta_1 & \theta_2^2 &\cdots &\theta_2\theta_n \\
		\vdots & \vdots & \vdots & \vdots \\
		\theta_n \theta_1 & \theta_n \theta_2 &\cdots &\theta_n^2
	\end{bmatrix}
	\pi_{\gamma}(d\theta)\\
	&=\text{diag}\left[\int_{\Theta_\gamma} \theta_1^2\pi_{\gamma}(d\theta), \int_{\Theta_\gamma} \theta_2^2\pi_{\gamma}(d\theta),\cdots,\int_{\Theta_\gamma} \theta_2^2\pi_{\gamma}(d\theta) \right]
\end{align*}
since $\int_{\Theta_\gamma}\theta_2^2\pi_{\gamma}(d\theta)=\cdots=\int_{\Theta_\gamma}\theta_n^2\pi_{\gamma}(d\theta)$ and $\int_{\Theta_\gamma} \theta_i \theta_j \pi_{\gamma}(d\theta)=0$ if $i\neq j$. Moreover, it is readily seen that
\begin{align*}
	1=&\int_{\Theta_\gamma} \|\theta\|^2 \pi_{\gamma}(d\theta)=\int_{\Theta_\gamma} [\theta_1^2+\theta_2^2+\cdots+\theta_n^2] \pi_{\gamma}(d\theta) \\
	=&\int_{\Theta_\gamma} \theta_1^2 \pi_{\gamma}(d\theta) +(n-1)\int_{\Theta_\gamma} \theta_2^2 \pi_{\gamma}(d\theta)\,,
\end{align*}
and consequently
\begin{align*}
	\int_{\Theta_\gamma}\theta_2^2\pi_{\gamma}(d\theta)=\cdots=\int_{\Theta_\gamma}\theta_n^2\pi_{\gamma}(d\theta)=\frac{1}{n-1}\sbracket{1-\int_{\Theta_\gamma} \theta_1^2 \pi_{\gamma}(d\theta)}=\frac{\gphi(\gamma)}{n}\,.
\end{align*}
Hence, we have
\begin{align}\label{Eq:A_gam}
	A_{\gamma}=\text{diag}\left[\int_{\Theta_\gamma} \theta_1^2 \pi_{\gamma}(d\theta), \frac{\gphi(\gamma)}{n},\cdots,\frac{\gphi(\gamma)}{n}\right]\,.
\end{align}
Noticing that $\gphi(\gamma)=\frac{n}{n-1}\sbracket{1-\int_{\Theta_\gamma} \theta_1^2 \pi_{\gamma}(d\theta)}$ and   $(1-\gphi(\gamma))+\frac{\gphi(\gamma)}{n}=\int_{\Theta_\gamma} \theta_1^2 \pi_{\gamma}(d\theta)$, the right-hand side of \eqref{Eq:A_{v,gam}} can be written as 
\begin{align*}
	(1-\gphi(\gamma))vv^\top + \gphi(\gamma)\frac{I_n}{n}=\text{diag}\left[\int_{\Theta_\gamma} \theta_1^2 \pi_{\gamma}(d\theta), \frac{\gphi(\gamma)}{n},\cdots,\frac{\gphi(\gamma)}{n}\right]\,,
\end{align*}
which matches \eqref{Eq:A_gam}. 

The bound of $\gphi(\gamma)$ in \eqref{Eq:phi_gam} can follow the same argument as in Section 2.3 in the supplement of \cite{Mourtada2022}. This completes the proof.
\end{proof}

\bigskip
We introduce an inequality in \cite[Lemma A.1]{Oliveira2016} to control the generating moment function before the proof of Lemma \ref{thm:min-eigen}.
\begin{lemma}\label{Lem:Oliveira}
	Let $X$ be a nonnegative random variable with a finite second moment. Then for all $\lambda\geq 0$
	\begin{align*}
		\E[e^{-\lambda X}]\leq e^{-\lambda \E[X]+\frac{\lambda^2}{2}\E[X^2]}\,.
	\end{align*}
\end{lemma}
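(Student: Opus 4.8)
The plan is to reduce the statement to a one-variable elementary inequality and then take expectations. The engine is the pointwise bound
\[
e^{-t} \leq 1 - t + \tfrac{t^2}{2}, \qquad t \geq 0,
\]
which I would verify by a short calculus argument: setting $f(t) = 1 - t + \tfrac{t^2}{2} - e^{-t}$, one has $f(0)=0$, $f'(0)=0$, and $f''(t) = 1 - e^{-t} \geq 0$ for $t\geq 0$, so $f'$ is nondecreasing on $[0,\infty)$ and hence nonnegative, whence $f$ is nondecreasing and nonnegative there. (Equivalently, this is Taylor's theorem with the integral form of the remainder.)

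Next, since $\lambda \geq 0$ and $X \geq 0$ almost surely, the product $\lambda X$ is a nonnegative random variable, so substituting $t = \lambda X$ into the pointwise inequality gives
\[
e^{-\lambda X} \leq 1 - \lambda X + \tfrac{\lambda^2}{2} X^2 \qquad \text{almost surely}.
\]
The right-hand side is integrable because $\E[X^2] < \infty$ by hypothesis and $\E[X] \leq \sqrt{\E[X^2]} < \infty$ by Cauchy--Schwarz on the underlying probability space, while the left-hand side is bounded by $1$; taking expectations of both sides is therefore legitimate and yields $\E[e^{-\lambda X}] \leq 1 - \lambda \E[X] + \tfrac{\lambda^2}{2}\E[X^2]$. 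Finally, applying $1 + u \leq e^{u}$ with $u = -\lambda\E[X] + \tfrac{\lambda^2}{2}\E[X^2]$ gives the claimed inequality $\E[e^{-\lambda X}] \leq \exp\!\big(-\lambda\E[X] + \tfrac{\lambda^2}{2}\E[X^2]\big)$.

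There is no substantial obstacle in this proof; the only points requiring a moment of care are the sign hypotheses ($\lambda \geq 0$ and $X \geq 0$), which are exactly what makes the substitution $t = \lambda X$ valid in a one-sided Taylor bound (the inequality $e^{-t}\leq 1-t+t^2/2$ can fail for $t<0$), and the brief integrability bookkeeping that justifies passing the inequality through the expectation.
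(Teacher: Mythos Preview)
Your proof is correct and follows essentially the same approach as the paper: the paper's proof consists of the single chain $\E[e^{-\lambda X}]\leq 1-\lambda \E[X]+\tfrac{\lambda^2}{2}\E[X^2]\leq e^{-\lambda \E[X]+\frac{\lambda^2}{2}\E[X^2]}$, invoking $1+y\leq e^y$ for the second step, while you have additionally supplied the pointwise bound $e^{-t}\leq 1-t+\tfrac{t^2}{2}$ for $t\geq 0$ and the integrability justification that the paper leaves implicit.
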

\begin{proof}
    We include the proof for completeness. It is clear that
    \begin{align*}
        \E[e^{-\lambda X}]\leq 1-\lambda \E[X]+\frac{\lambda^2}{2}\E[X^2]\leq e^{-\lambda \E[X]+\frac{\lambda^2}{2}\E[X^2]}
    \end{align*}
    by using $1+y\leq e^y$ in the second inequality.
\end{proof}

\smallskip

\begin{proof}[Proof of Lemma \ref{thm:min-eigen}]
\noindent\textbf{Step 1:} 
For every $\theta \in S^{n-1}$ and $\lambda>0$, the bound for the moment generating function can be derived by Lemma \ref{Lem:Oliveira}:
\begin{align*}
	\E\left[\exp\left(-\lambda \frac{1}{N} \|R_{\phi_{\theta}}[X^m]\|^2_{\R^{Nd}}\right)\right]&\leq \exp\left(-\lambda \frac{1}{N}\E[\|R_{\phi_{\theta}}[X^m]\|^2_{\R^{Nd}}]+{\frac{\lambda^2}{2N^2}\E[\|R_{\phi_{\theta}}[X^m]\|^4_{\R^{Nd}}]}\right) \\
 &\leq \exp\left(-\lambda c_{\bar\calL}+{\frac{\lambda^2}{2N^2}\E[\|R_{\phi_{\theta}}[X^m]\|^4_{\R^{Nd}}]}\right)\,.
\end{align*}
By fourth-moment Assumption \ref{assum:L2L4} and Jensen's inequality
\begin{align*}
	\E[\|R_{\phi_{\theta}}[X^m]\|^4_{\R^{Nd}}]&\leq \kappa\cdot  \left(\E[\|R_{\phi_{\theta}}[X^m]\|^2_{\R^{Nd}}]\right)^2 \leq \kappa\cdot  \left(\E\sbracket{\sum_{i=1}^N \norm{R_{\phi_{\theta}}[X^m]_i}^2_{\R^{d}}} \right)^2\\
	&= \kappa\cdot  \left(\E\sbracket{\sum_{i=1}^N \norm{\frac{1}{N-1}\sum_{j\neq i}\phi_{\theta}(r^m_{ij})\bfr^m_{ij}}^2_{\R^{d}}} \right)^2 \\
	&\leq \kappa\cdot \left(\sum_{i=1}^N \frac{1}{N-1}\sum_{j\neq i}\E\sbracket{\normv{\phi_{\theta}(r^m_{ij})}^2} \right)^2\,.
\end{align*}
Recall that $\phi_{\theta}(r^m_{ij})=\sum_{k=1}^n \theta_k \psi_{k}(r^m_{ij})$, the distribution of random variable $r^m_{ij}$ is $\rho$, and $\{\psi_k\}_{k=1}^{\infty}$ are orthonormal basis functions in $L^2_{\rho}$. Then, we can proceed to get
\begin{align*}
	\frac{1}{N-1}\sum_{j\neq i}\E\sbracket{\normv{\phi_{\theta}(r^m_{ij})}^2}=\frac{1}{N-1}\sum_{j\neq i}\E\sbracket{\normv{\sum_{k=1}^n \theta_k \psi_{k}(r^m_{ij})}^2}=\sum_{k=1}^n \theta_k^2=1\,.
\end{align*}
Therefore, we have 
\begin{equation}\label{Ineq:R^4}
	{\E[\|R_{\phi_{\theta}}[X^m]\|^4_{\R^{Nd}}]\leq \kappa N^2\,.}
\end{equation}
Combining \eqref{Ineq:R^4} and the fact that $\frac{1}{N}\E[\|R_{\phi_{\theta}}[X^m]\|^2_{\R^{Nd}}]\geq c_{\bar\calL}$,  we obtain
\begin{equation}\label{Eq:Exp<1}
	\E\left[\exp\left(- \frac{\lambda}{N} \|R_{\phi_{\theta}}[X^m]\|^2_{\R^{Nd}}+ \lambda c_{\bar\calL}- {\frac{\lambda^2 \kappa}{2}}  \right)\right]\leq 1, \quad \forall \theta \in S^{n-1}, \lambda>0.
\end{equation}

Thus, by the independence of samples, we obtain
\begin{equation*}
    \sup_{\theta \in S^{n-1}}\E\left[\exp\left(-\frac{\lambda}{N} \sum_{m=1}^M \|R_{\phi_{\theta}}[X^m]\|^2_{\R^{Nd}}+ \lambda M c_{\bar\calL}-\frac{\lambda^2}{2} {\kappa M }\right)\right]\leq 1, \quad \forall \lambda>0.
\end{equation*}
In other words, the process 
$$
Z_\lambda(\theta):= -\frac{\lambda}{N} \sum_{m=1}^M \|R_{\phi_{\theta}}[X^m]\|^2_{\R^{Nd}}+ \lambda M c_{\bar\calL}-\frac{\lambda^2}{2} {\kappa M }
$$ 
with $\theta\in S^{n-1}$ has a uniformly bounded moment generating function. Then, applying the PAC-Bayes inequality in Lemma \ref{lemma:PAC_Bay} with $\Theta = S^{n-1}$, we obtain
	\begin{align}\label{Eq:PAC_Ineq2}
		\P\left\{ {\forall~ \mu\in \mathcal{P}}\,, \int_{\Theta} Z_\lambda(\theta) \mu(\theta)\leq \textup{KL}(\mu,\pi)+t\right\}\geq 1-e^{-t},\quad \forall t>0, 
	\end{align}
where $\pi, \mu \in \mathcal{P}$ with $\mathcal{P}$ denoting the set of all probability measures on $\Theta$. In the next step, we will select a specific $\pi$ and a subset of $\mathcal{P}$ in \eqref{Eq:PAC_Ineq2} to obtain a $\lambda$-dependent bound $\P\left\{\lambda_{\min}(\bfA_n^M)<  \frac 1 8 c_{\bar\calL}\right\}$, and we remove the dependence on $\lambda$ in Step 3.

\smallskip 
\noindent\textbf{Step 2:} Obtain a lower bound for $\lambda_{\min}(\bfA_n^M)$ through constructing probability measures $\pi$ and $\mu$ in \eqref{Eq:PAC_Ineq2} to control $\int_{\Theta} Z_\lambda(\theta)\mu(d\theta)$. This lower bound depends on $\lambda$, which will be selected in Step 3 to achieve the desired bound in \eqref{Eq:Low_Min_est_ap}.

Let $\pi$ be a uniform probability measure on $S^{n-1}$. For each $v\in S^{n-1}$ and $\gamma\in(0,1/2]$, define $\Theta_{v,\gamma}$ and probability measures $\pi_{v,\gamma}$ as in \eqref{eq:prob_measures}. 
Then,  the PAC-Bayes inequality \eqref{Eq:PAC_Ineq2} with $\mu(d\theta)=\pi_{v,\gamma}(d\theta)$ implies that 
	\begin{align*}
		\P\left\{\sup_{v\in S^{n-1},\gamma\in(0,1/2] } \int_{\Theta} Z_\lambda(\theta) \pi_{v,\gamma} (\theta)- \textup{KL}(\pi_{v,\gamma},\pi)\leq t\right\}\geq 1-e^{-t}\,. 
	\end{align*}
Meanwhile, note that
 \begin{align*}
    \frac{1}{M}\int_{\Theta} Z_\lambda(\theta) \pi_{v,\gamma} (d\theta) &=\bigg[ -\lambda \int_{\Theta}\innerp{\bar{\bfA}_{n}^{M} \theta,\theta}\pi_{v,\gamma}(d\theta)+ \lambda c_{\bar\calL}-\frac{\lambda^2 \kappa}{2}  \bigg]\\
	&=:[-\lambda F_{v,\gamma}(\bar{\bfA}_{n}^{M})+ \lambda c_{\bar\calL}-\frac{\lambda^2 \kappa}{2}]\,.
\end{align*}  
Hence, the above inequality implies that, with at least probability $1-e^{-Mu}$ with $u=\frac t M$ for simplicity,  
\begin{align}
	& \sup_{v\in S^{n-1},\gamma\in(0,1/2] } \bigg[-\lambda F_{v,\gamma}(\bar{\bfA}_{n}^{M}) \nonumber \\
	&\qquad\qquad\qquad\quad + \lambda c_{\bar\calL}-\frac{\lambda^2}{2} {\kappa N^2}-\frac{1}{M}\textup{KL}(\pi_{v,\gamma},\pi) \bigg] \nonumber  
	 \leq \frac t M \,. \nonumber \\
	\Leftrightarrow\quad& \inf_{v\in S^{n-1},\gamma\in(0,1/2] }  \lambda F_{v,\gamma}(\bar{\bfA}_{n}^{M}) +\frac{1}{M} \textup{KL}(\pi_{v,\gamma},\pi) \geq  \lambda c_{\bar\calL}-\frac{\lambda^2 \kappa}{2}   - u \,.\label{Ineq:Key_F+KL}
\end{align}
Follow the conventions in \cite{Mourtada2022}, we refer $F_{v,\gamma}(\bar{\bfA}_{n}^{M})$ to be the approximation term and $\textup{KL}(\pi_{v,\gamma},\pi)$ the entropy term. The controls of these terms follow from the above selection of measure $\pi_{v,\gamma}$ and $\pi$.

The control of approximate term follows from applying Lemma \ref{Lem:F_Sigma} with $\Sigma = \bar{\bfA}_{n}^{M}$: 
\[
F_{v,\gamma}(\bar{\bfA}_{n}^{M})=[1-\gphi(\gamma)]\innerp{\bar{\bfA}_{n}^{M} v, v}+\gphi(\gamma) \frac{\textup{Tr}(\bar{\bfA}_{n}^{M})}{n} 
\]
with $\gphi(\gamma)$ in \eqref{Eq:phi_gam}. The control of the entropy term is from Section 2.4 in the supplement of \cite{Mourtada2022}). Specifically, we have for every $v\in S^{n-1}$ and $\gamma>0$, 
\begin{align}
	\textup{KL}(\pi_{v,\gamma},\pi) &= \int_{\Theta} \log\rbracket{\frac{d\pi_{v,\gamma}}{d\pi} (\theta)} \pi_{v,\gamma}(d\theta)= \int_{\Theta} \log\sbracket{\frac{1}{\pi(\Theta_{v,\gamma})}}\pi_{v,\gamma}(d\theta) \nonumber \\
	&= \log\sbracket{\frac{1}{\pi(\Theta_{v,\gamma})}}\leq n\log(1+2/\gamma)\,, \nonumber 
\end{align}
where the bound for surface area $\pi(\Theta_{v,\gamma})$ is from \cite[Lemma 4.2.13]{Vershynin2018}. 

Plugging these two estimates into \eqref{Ineq:Key_F+KL}, we obtain with at least probability $1-e^{-Mu}$, for all $v\in S^{n-1}, \gamma\in(0,1/2]$, 
\begin{align*}
	& \lambda(1-\gphi(\gamma))\innerp{\bar{\bfA}_{n}^{M} v, v} +  \lambda\gphi(\gamma) \frac{\text{Tr}(\bar{\bfA}_{n}^{M})}{n}+  \frac{n}{M}\log(1+2/\gamma) \geq  \lambda c_{\bar\calL}-\frac{\lambda^2 \kappa }{2}  - u, \,
\end{align*} 
which amounts to
\begin{align}
	\innerp{\bar{\bfA}_{n}^{M} v, v}
	&\geq \frac{1}{\lambda(1-\gphi(\gamma))}\bigg[\lambda c_{\bar\calL}-\frac{\lambda^2 \kappa}{2}\nonumber \\
	&\quad - \frac{n}{M}\log(1+2/\gamma)-u\bigg]-\frac{\gphi(\gamma)}{1-\gphi(\gamma)} \frac{\text{Tr}(\bar{\bfA}_{n}^{M})}{n}\,. \label{Ineq:LowBD_Abar}
\end{align}
Also, the uniform boundedness of $\{\psi_k\}$ in Assumption \ref{assum:bd-eigenfn} ($C_{\max}=\sup_{k\geq 1}\|\psi_k\|_{\infty}<\infty$) implies: 
\begin{align*}
	\frac{\text{Tr}(\bar{\bfA}_{n}^{M})}{n}&=\frac{1}{nMN} \sum_{m=1}^M\sum_{k=1}^n  \|R_{\psi_k}(X^m) \|_{\R^{Nd}}^2\\
	&\leq \frac{1}{nMN} \sum_{m=1}^M\sum_{k=1}^n \sum_{i=1}^N \bigg\| \frac{1}{N}\sum_{j\neq i}\psi_k(r_{ij})\bfr_{ij} \bigg\|_{\R^d}^2 \\ 
	&\leq C_{\max}^2<\infty. 
\end{align*}
 Moreover, when $\gamma\in(0,1/2]$ we have by \eqref{Eq:phi_gam}
\begin{align*}
	\frac{\gphi(\gamma)}{1-\gphi(\gamma)}\leq \frac{2\gamma^2}{1-\gphi(\gamma)}  \quad \text{ and } \quad \log(1+2/\gamma) \leq \log\left(\frac{5}{4\gamma^2}\right)\,.
\end{align*}
{Letting $c_{\gamma}:=\frac{1}{1-\gphi(\gamma)}$, one can note that $1\leq c_{\gamma}\leq 2$ and $c_{\gamma}\to 1$ as $\gamma \to 0$.} Therefore, we have for every  $\gamma\in(0,1/2]$ and $u>0$
\begin{align}
	&\inf_{v\in S^{n-1}} \innerp{\bar{\bfA}_{n}^{M} v, v} \nonumber \\
	&\geq \frac{c_{\gamma}}{\lambda}\bigg[\lambda c_{\bar\calL}-\frac{\lambda^2 \kappa}{2}- \frac{n}{M}\log\left(\frac{5}{4\gamma^2}\right)-u\bigg]-2c_{\gamma}C_{\max}^2 \gamma^2  \nonumber \\
	&=c_{\gamma}\left[ c_{\bar\calL}-{\frac{\lambda  \kappa }{2}}-\frac{n}{\lambda  M}\log\left(\frac{5}{4\gamma^2}\right)-\frac{u}{\lambda}-2C_{\max}^2 \gamma^2 \right]=:G_{u}^M(\gamma,\lambda) \label{Ineq:LowBD_Abar2} 
\end{align}
holds with probability at least $1-e^{-Mu}$.

\smallskip 
\noindent\textbf{Step 3:} Select $\lambda, \gamma$ properly to obtain the probability bound for $\lambda_{\min}(\bar{\bfA}_{n}^{M})=\inf_{v\in S^{n-1}} \innerp{\bar{\bfA}_{n}^{M} v, v}$.  
Based on \eqref{Ineq:LowBD_Abar2}, we have 
\begin{align}\label{Ineq:Prob_lam_Low}
\P\left\{\lambda_{\min}(\bar{\bfA}_{n}^{M}) \leq \sup_{\gamma, \lambda} G_{u}^M(\gamma,\lambda)\right\} \leq e^{-Mu}	\,.
\end{align}
Choosing {$\gamma^2=\frac{c_{\bar\calL}}{4C_{\max}^2}\leq \frac 14$ and $\lambda =\frac{c_{\bar\calL}}{2c_{\gamma}\kappa }$} in \eqref{Ineq:LowBD_Abar2}, then writing  $C_{0,n/M}=\frac{n}{M}\log\left(\frac{5C_{\max}^2}{c_{\bar\calL}}\right)$ in short, we have 
\begin{align*}
    G_{u}^M(\gamma,\lambda)&=c_{\gamma}\cdot\left[{\frac{c_{\bar\calL}}2}-\frac{\lambda  \kappa }{2}-\frac{C_{0,n/M}}{\lambda }-\frac{u}{\lambda}\right]\\
    &= c_{\gamma}\cdot\left[{\frac{c_{\bar\calL}}2}-2\sqrt{\frac{\kappa}{2}(C_{0,n/M}+u)}\ \right]\,,
\end{align*}
with the choice of $\lambda=\sqrt{\frac{C_{0,n/M}+u}{\frac{\kappa}{2}}}$. 

Letting {$G_{u}^M(\gamma,\lambda)=\frac{1}{2}(1-\varepsilon)c_{\bar\calL}$ for any $\varepsilon>0$}, namely
\begin{align*}
	u=\frac{c_{\bar\calL}^2}{8 c_{\gamma}^2 \kappa}[c_{\gamma}-1+\varepsilon]^2 -C_{0,n/M}\,,
\end{align*}
we have by PAC Bayesian inequality \eqref{Ineq:Prob_lam_Low} that 
\begin{equation}\label{Ineq:Prob_lam_Low_2}
	\begin{aligned}
		&\P\left\{\lambda_{\min}(\bar{\bfA}_{n}^{M}) \leq \frac{1}{2}(1-\varepsilon)c_{\bar\calL} \right\} \\
		&\leq e^{-Mu}=\exp\rbracket{M C_{0,n/M}-{\frac{Mc_{\bar\calL}^2}{8c_{\gamma}^2 \kappa }}\left[c_{\gamma}-1+\varepsilon\right]^2 } \\
	&=\exp\rbracket{n\log\left(\frac{5C_{\max}^2}{c_{\bar\calL}}\right)-{\frac{\calC_0^2 M c_{\bar\calL}^2}{8 \kappa N^2}}}\,,
	\end{aligned}
\end{equation}
where we denote $\calC_0=\frac{1}{c_{\gamma}}\left(c_{\gamma}-1+\varepsilon\right)$.  Then notice that 
\begin{align*}
     \calC_0=\frac{c_{\gamma}-1+\varepsilon}{c_{\gamma}}\geq \frac{\varepsilon}{2} \,,
\end{align*}
by $1\leq c_{\gamma}\leq 2$. 
The inequality \eqref{Ineq:Prob_lam_Low_2} and the range of $\calC_0$ imply that
\begin{align*}
    \P\left\{\lambda_{\min}(\bar{\bfA}_{n}^{M}) \leq \frac{1}{2}(1-\varepsilon)c_{\bar\calL} \right\} &\leq  \exp\rbracket{n\log\left(\frac{5C_{\max}^2}{c_{\bar\calL}}\right)-{\frac{\varepsilon^2 M c_{\bar\calL}^2}{16 \kappa}}}
\end{align*}
which is an exponential decay tail in $M$. We conclude the proof.
\end{proof}

\subsection{Proof of Theorem \ref{cor:L2rho_upper_case2} under non-uniformly bounded basis functions}\label{subsec:case2} 
In this section, we present the proof of Theorem \ref{cor:L2rho_upper_case2} under Assumption {\rm\ref{assum:L4-eigenfn}}. 
Similar to the {uniformly bounded basis} case, we need a fourth-moment bound lemma for the {non-uniformly bounded basis} case.
\begin{lemma}[Fourth-moment bounds: non-uniformly bounded basis]\label{lemma:mmtBd_case2}
Let $\widebar{\bfA}_{n}^\infty = \E[\widebar{\bfA}_{n}^{M}] $ and $\widebar{\bfb}_{n}^\infty = \E[\widebar{\bfb}_{n}^{M}]$, where $\widebar{\bfA}_{n}^{M}$ and $\widebar{\bfb}_{n}^{M}$ are defined in Eq. {\rm \eqref{eq:Ab}}. Let $\btheta_n^*=(\theta_1^*,\ldots,\theta_n^*)$ be the first $n$ coefficients of the true function $\phi_*$. 
{Under Assumption {\rm\ref{assum:L4-eigenfn}} with $\delta\geq 0$ and assuming $\beta>\delta+1/2$}, we have
\begin{subnumcases}{\label{Ineq:4MofAb_case2}}
    \Big(\E\big[\| ( \widebar{\bfA}_{n}^{M}-\widebar{\bfA}_{n}^\infty)\btheta^*_n\|_{\R^n}^4\big]\Big)^{\frac 12}  \leq C_A \frac n M \,; \label{Ineq:4MofA_case2}\\ 
		\Big(\E\big[\| \widebar{\bfb}_{n}^{M}-\widebar{\bfb}_{n}^\infty\|_{\R^n}^4\big]\Big)^{\frac 12} \leq C_b \frac n M \,,\label{Ineq:4Mofb_case2}
\end{subnumcases}
where the constants $C_A = \sqrt{96}C_{\delta}^2C_{\delta,\beta}^2 \widetilde{L}^2 L^2/{\pi^{2\beta}}$ and $C_b = \sqrt{798} \widetilde{L}^2 (C_{\delta}^4  C_{\delta,\beta}^4L^4/{\pi^{4\beta}} +\frac{1}{N^2}C_{\eta} )^{1/2}$ with $C_{\delta} >0$ in Eq. \eqref{cond:sqrt_n}, $C_{\delta,\beta}=(\sum_{k=1}^\infty k^{2(\delta-\beta)})^{1/2}$ are independent of $n$ and $M$, and $c_\eta$ is a constant related to the noise. 
\end{lemma}
\begin{proof}
	We follow the proof of Lemma \ref{lemma:mmtBd_v2} in Appendix \ref{sec:proof_upperRate} above. The main task is to replace the use of the uniform bound $C_{\max}=\sup_{k} \|\psi_k\|_{\infty}$ by the $n^\delta$-growth condition.
	
	We only need to adopt the estimates of $Z_A(l)$ and $\xi(l)$ after the inequality \eqref{Ineq:Za} in the proof of the Lemma \ref{lemma:mmtBd_v2} presented in Appendix \ref{sec:proof_upperRate}. Let Assumption {\rm\ref{assum:L4-eigenfn}} hold with $\delta\geq 0$ and $\beta>\delta+1/2$. Note that for $\phi_n^*= \sum_{k=1}^n \theta_k^* \psi_k\in H^{\beta}_{\rho}(L)$, we have 
	\begin{align*}
		\|\phi_n^*\|_\infty &\leq C_{\delta} \sum_{k=1}^n|\theta_k^*|k^{\delta} \leq C_{\delta}  \left(\sum_{k=1}^n k^{2(\delta-\beta)}\right)^{1/2} \left(\sum_{k=1}^n k^{2\beta}|\theta_k^*|^2 \right)^{1/2} \leq C_{\delta}  C_{\delta,\beta}L/{\pi^{\beta}}\,,
	\end{align*}
 where $C_{\delta,\beta}=(\sum_{k=1}^\infty k^{2(\delta-\beta)})^{1/2}\leq \frac{2(\beta-\delta)}{2(\beta-\delta)-1} <\infty $ since $\beta>\delta+1/2$. 
	Using Jensen's inequality and the uniform $L^4_{\rho}$-bound for the basis in \eqref{cond:L4-eigenfn} , we get
	\begin{align*}
	 \E[|Z_A(l)|^4]& =\E\bigg[ \Big| \frac 1 N  \sum_{i=1}^N \frac{1}{(N-1)^2} \sum_{j\neq i} \sum_{j'\neq i}\phi_n^*(r_{ij})\psi_l(r_{ij'}) \innerp{\br_{ij}, \br_{ij'}}_{\R^d}  \Big|^4 \bigg] \\
	 &\leq  \frac{1}{N(N-1)}  \sum_{i=1}^N  \sum_{j\neq i}\E\Big[| \psi_l(r_{ij})|^4   \Big] \|\phi_n^*\|_\infty^4 \leq C_{\delta}^4  C_{\delta,\beta}^4 \| \psi_l\|_{L_{\rho}^4}^4 L^4/{\pi^{4\beta}} \leq C_{\delta}^4  C_{\delta,\beta}^4 \widetilde{L}^4 L^4/{\pi^{4\beta}}\,.
	\end{align*}
	Thus, we obtain $\E [\|Z_A\|_{\ell^4}^4]=\sum_{l=1}^n \E [|Z_A(l)|^4] \leq  C_{\delta}  C_{\delta,\beta} n \widetilde{L}^4 L^4/{\pi^{4\beta}}$ and then 
	\begin{align*}
		\E[ \| (\widebar{\bfA}_{n}^{M}-\widebar{\bfA}_{n}^\infty)\btheta^*_n \|_{\R^n}^4 ]\leq  \frac{96 n^2}{M^2}  C_{\delta}^4 C_{\delta,\beta}^4  \widetilde{L}^4 L^4/{\pi^{4\beta}}\,.
	\end{align*}
	Taking square root,  we obtain \eqref{Ineq:4MofA_case2} with $C_A = \sqrt{96}C_{\delta}^2C_{\delta,\beta}^2 \widetilde{L}^2 L^2/{\pi^{2\beta}}$. 
	
	Similarly, we can get 
	\begin{align*}
		|\xi(l)|\leq \frac{C_{\delta}  C_{\delta,\beta}L}{N(N-1) {\pi^{\beta}}}  \sum_{i=1}^N  \sum_{j\neq i}  | \psi_l(r_{ij})|\,, \text{ and } \E[| \xi(l)|^4] \leq C_{\delta}^4  C_{\delta,\beta}^4 \widetilde{L}^4 L^4/{\pi^{4\beta}}\,.
	\end{align*}
	We use the Cauchy-Schwarz inequality to bound the noise term above as follows.
	\begin{align*}
	\E[ | \widetilde{\eta} (l)|^4 ]&= 
	\frac{1}{N^4}\E[|\innerp{R_{\psi_l}[X], \boldeta}_{\R^{Nd}}|^4] \\
	&\leq \frac{1}{N^4}\E[\|R_{\psi_l}[X]\|^4_{\R^{Nd}} \|\boldeta\|^4_{\R^{Nd}}] \\
	&\leq \frac{1}{N^2} C_{\eta} \sup_{k\geq 1} \|\psi_k\|_{L^4_{\rho}}^4\leq \frac{1}{N^2} C_{\eta}\widetilde{L}^4 \,,
	\end{align*}
	Combining these bounds, we have, for $1\leq l\leq n$, 
	\begin{align*}
	\E[|\bfb_n(l)|^4] &\leq \E[|\xi(l) +  \widetilde{\eta} (l)|^4]\leq 2^3 \E\Big[ |\xi(l)|^4  + |\widetilde{\eta} (l)|^4\Big] \\ 
	&\leq 2^3 \Big[C_{\delta}^4  C_{\delta,\beta}^4 \widetilde{L}^4 L^4/{\pi^{4\beta}}  + \frac{1}{N^2} C_{\eta}\widetilde{L}^4\Big] \leq 2^3 \widetilde{L}^4[C_{\delta}^4  C_{\delta,\beta}^4L^4/{\pi^{4\beta}}  + \frac{1}{N^2}C_{\eta} ]. 
	\end{align*}
	Consequently, applying the following Lemma \ref{lemma:4th_mmt_empirical_mean}, we obtain \eqref{Ineq:4Mofb_case2}
\begin{align*}
	\E[\|\widebar{\bfb}_{n}^{M}-\widebar{\bfb}_{n}^\infty\|_{\R^n}^4]&\leq \frac{798 n^2}{M^2} \widetilde{L}^4[C_{\delta}^4  C_{\delta,\beta}^4L^4/{\pi^{4\beta}}  + \frac{1}{N^2}C_{\eta} ] 
	\leq C_b^2\frac{n^2}{M^2} 
\end{align*}
with $C_b = \sqrt{798} \widetilde{L}^2 (C_{\delta}^4  C_{\delta,\beta}^4L^4/{\pi^{4\beta}} +\frac{1}{N^2}C_{\eta} )^{1/2}$. 
\end{proof}

Next, to prove the bound for the variance, we need a bound for the left tail probability for $\{\lambda_{\min}(\bar{\bfA}_{n}^{M}) \leq (1-\varepsilon) c_{\bar\calL}\}$ as in Lemma \ref{thm:min-eigen_C} or Lemma \ref{thm:min-eigen}. Under Assumption {\rm\ref{assum:L4-eigenfn}} on the basis functions, we only need to change the $C_{\max}$'s in left tail probabilities to be $C_{\delta} n^{\delta}$. We omit the proof since the main change is to replace the use of $C_{\max}$ by the $\delta$-power growth condition \eqref{cond:sqrt_n} in the proof. 

\begin{lemma}[Left tail probabilities: non-uniformly bounded basis]\label{thm:min-eigen_B_case2}
	Consider the norm matrix $\bar{\bfA}_n^M$ as defined in Eq. \eqref{eq:Ab_A}  associated with the basis functions $\{\psi_k\}_{k= 1}^{\infty}$.
	\begin{enumerate}
		\item[(1)] If Assumption {\rm\ref{assum:L4-eigenfn}} on the basis functions holds, then we have
	\begin{equation}\label{Eq:Low_Min_est_C_case2}
		\P\left\{ \lambda_{\min}(\bar{\bfA}_{n}^{M}) \leq (1-\varepsilon) c_{\bar\calL} \right\}\leq n  \sbracket{ \frac{e^{-\varepsilon}}{(1-\varepsilon)^{1-\varepsilon}}}^{\frac{c_{\bar\calL}  M}{C_{\delta}^2 n^{1+2\delta}}}\,, 
	\end{equation}
	for any $\varepsilon\in (0,1)$.
	\item[(2)] In addition, if Assumption {\rm \ref{assum:L2L4}} also holds, then we have, for any $\varepsilon\in(0,1)$, 
 \begin{equation}\label{Eq:Low_Min_est_case2}
     \P\left\{\lambda_{\min}(\bar{\bfA}_{n}^{M}) \leq \frac{1-\varepsilon}{2}c_{\bar\calL} \right\} \leq  \exp\rbracket{n\log\left(\frac{5C_{\delta}^2n^{2\delta}}{c_{\bar\calL}}\right)-{\frac{\varepsilon^2 M c_{\bar\calL}^2}{16 \kappa}}} \,, 
 \end{equation}
 where {$M\geq \frac{16 \kappa}{c_{\bar\calL}^2 \varepsilon^2}n\log\left(\frac{5C_{\delta}^2n^{2\delta}}{c_{\bar\calL}}\right)$} and $n\geq 2$.
	\end{enumerate}
\end{lemma}

\begin{remark}
	Analogously to Lemma \ref{thm:min-eigen_B}, under the Assumption {\rm\ref{assum:L4-eigenfn}}, the matrix Bernstein inequality implies that
	\begin{equation}\label{Eq:Low_Min_est_B_case2}
		\P\left\{ \lambda_{\min}(\bar{\bfA}_{n}^{M}) \leq (1-\varepsilon) c_{\bar\calL} \right\}\leq 2n \exp\rbracket{-\frac{M\varepsilon^2c_{\bar\calL}^2/4}{C_{\delta}^4 n^{2+4\delta} +C_{\delta}^2 n^{1+2\delta}\varepsilon c_{\bar\calL}/3}}\,,
	\end{equation}
	for any $\varepsilon\in (0,1)$.
\end{remark}

Notice that as the sample size increases, the first left tail probability \eqref{Eq:Low_Min_est_C_case2}, obtained by matrix Chernoff inequality, leads to an exponentially decaying bound when $M\geq C_{\delta}^2 n^{1+2\delta}/c_{\bar\calL}$. On the other hand, the second left tail probability \eqref{Eq:Low_Min_est_case2}, obtained by PAC-Bayes inequality, leads to an exponentially decaying bound when $M\geq C_{\delta, \bar\calL} n\log(n)$, for some constant $C_{\delta, \bar\calL}>0$. Thus, when taking $n\approx M^{\frac{1}{2\beta+1}}$ in the bias-variance tradeoff, we need $\beta>\delta$ and $\beta>0$ to have an exponentially decaying bound for the left tail probabilities \eqref{Eq:Low_Min_est_C_case2} and \eqref{Eq:Low_Min_est_case2}, respectively.  Both of the conditions $\beta>\delta$ and $\beta>0$ are weaker than the condition $\beta>\delta+1/2$ required in the fourth-moment bound in Lemma \ref{lemma:mmtBd_case2} (and therefore in Theorem \ref{cor:L2rho_upper_case2}). It is interesting to explore whether the fourth-moment bounds in \eqref{Ineq:4MofAb_case2} hold only when $\beta>\delta$.

Next, we have the following bound for the variance, similar to Lemma \ref{Lem:Ab_conv}. Again, we omit the proof since we only need to modify the term $G_{L,c_{\bar\calL}}(n,M)$ and the constant $C_0$ accordingly.

\begin{lemma}[Bound for variance: non-uniformly bounded basis]\label{Lem:Ab_conv_case2}
Under Assumption {\rm\ref{assum:L4-eigenfn}} on the basis functions with $\beta>\delta+1/2$, the following bound for the tamed LSE in Definition {\rm   \ref{def:tlse}} satisfies  
\begin{equation}\label{eq:errorL2E_case2}
	\E_{\phi_*}\Big[\Big\|\widehat \btheta_{n,M}- \btheta_{n}^*\Big\|^2_{\R^n}\Big] \leq C_0  c_{\bar\calL}^{-2} \frac n M + 2\epsilon_n^* +  G_{L,c_{\bar\calL}}(n,M) \,,
\end{equation}
where {$C_0= \sqrt{798}C_{\delta}^2C_{\delta,\beta}^2C_{\beta}^4\tilde{L}^2(\frac{C_\eta^{1/2}\pi^{2\beta}}{C_{\delta} C_{\delta,\beta} C_{\beta}^2L^2N} + 1)$}, $\epsilon_n^*= c_{\bar\calL}^{-2}  \sum_{l= n+1}^\infty |\theta_l^*|^2$ and 
\begin{align}\label{Def:G_Lc_case2}
	G_{L,c_{\bar\calL}}(n,M)= \frac{L^2n}{\pi^{2\beta}}  \sbracket{ \frac{e}{2}}^{\frac{c_{\bar\calL}  M}{2C_{\delta}^2 n^{1+2\delta}}}\,.
\end{align}
Moreover, if the fourth-moment Assumption \ref{assum:L2L4} also satisfies,  the bound in \eqref{eq:errorL2E_case2} holds with 
\begin{align}\label{Def:G_Lc2_case2}
	G_{L,c_{\bar\calL}}(n,M)=  \frac{L^2}{\pi^{2\beta}} \exp\rbracket{{n}\log\left(\frac{5C_{\delta}^2n^{2\delta}}{c_{\bar\calL}}\right)-{\frac{ M c_{\bar\calL}^2}{64 \kappa N^2}}}\,.
\end{align}
\end{lemma}

With Lemma \ref{lemma:mmtBd_case2}--\ref{Lem:Ab_conv_case2} in hand, we can prove Theorem \ref{cor:L2rho_upper_case2} now.

\begin{proof}[Proof of Theorem \ref{cor:L2rho_upper_case2}]
	Suppose Assumption {\rm 2.1},  Assumption {\rm (B1)} and Assumption {\rm 2.16} on the model hold. Let {Assumption {\rm\ref{assum:L4-eigenfn}} on the basis hold with $\delta\geq 0$}. As in the {uniformly bounded basis} case, we only need to focus on proving
\begin{equation}\label{ineq:upper_main_case2}
\limsup_{M\to\infty} \sup_{\phi_*\in H_{\rho}^{\beta}(L)}   \E_{\phi_*}\left[ M^{\frac{2\beta}{2\beta+1}} \| \widehat{\phi}_{n_M,M}-\phi_* \|_{L^2_\rho} ^2 \right] \leq C_{\textup{upper}}=2C_{\beta,L}  (C_0 c_{\bar\calL}^{-2})^\frac{2\beta}{2\beta+1}\,,
\end{equation}
with $n_M=\floor{M^{\frac{1}{2\beta+1}}}$. Applying the same bias-variance-concentration tradeoff in Proof of Proposition \ref{thm:L2rho_upper}, we have 
\begin{align}
  \E_{\phi_*}[ \| \widehat{\phi}_{n,M}-\phi_* \|_{L^2_\rho} ^2 ]
	&\leq (L+ 2 c_{\bar\calL}^{-2} ) n^{-2\beta}+  C_0  c_{\bar\calL}^{-2} \frac n M+ G_{L,c_{\bar\calL}}(n,M)\nonumber \\
	&=: g(n)+ G_{L,c_{\bar\calL}}(n,M) \,.\label{Eq:Trade-off_case2}
\end{align}
Notice that $g(n_M) \leq C M^{-\frac{2\beta}{2\beta+1}} $ with some positive constant $C$ independent of $M$. 
Under Assumption {\rm\ref{assum:L4-eigenfn}} on the basis functions with $\beta>\delta+1/2$, the term $G_{L,c_{\bar\calL}}(n_M,M)$ can be expressed as in \eqref{Def:G_Lc_case2}, which decays to zero exponentially. Thus, we prove Eq. \eqref{ineq:upper_main_case2}.
\end{proof}


\section{Fractional Sobolev spaces, Sobolev classes and H\"older class}\label{sec:append_Sob}
In this section, we recall some related fractional Sobolev spaces and their properties. We refer \cite{BSV2015DCDS,DiscretizationsFL2018, Hitchhiker2012,VariationalMethods2016} for more details. We only consider the case when $\rho(dx)=\rho'(x)dx$ and the density $\rho'$ satisfying $0<c\leq \rho'(x)\leq C<\infty$ for all $x\in[0,1]$.  Moreover, we define the Gagliardo (or classical) fractional Sobolev class $W_{\rho}^{\beta}(L)$ and the spectral fractional Sobolev class $H_{0,\rho}^{\beta}(L)$ by truncating the corresponding fractional Sobolev spaces. Namely, $W_{\rho}^{\beta}(L)$ and $H_{0,\rho}^{\beta}(L)$ are the balls with radius $L$ in the respective fractional Sobolev spaces.

\subsection{Classical fractional Sobolev spaces and fractional Sobolev classes}
When $\beta$ is an integer, we define the weighted Sobolev space $W_\rho^{\beta}$ as 
\begin{align}
	W_{\rho}^{\beta}=W_{\rho}^{\beta}([0,1])&:=\Big\{f\in L^2_\rho([0,1]): f^{(\beta-1)} \text{ is a.c.}, \|f^{(\beta)}|\|_{L^2_{\rho}} < \infty \Big\}. \label{Def:Sobolev}
\end{align}
where a.c. means absolutely continuous. And $W_{0,\rho}^{\beta}$ denote the closure of $C^{\infty}_0([0,1])$, the spaces of smooth functions with compact support on $(0,1)$, under the norm $\|\cdot\|_{W^{\beta}_{\rho}}$. So, inspired by the conventional weighted Sobolev classes  \cite[Definition 1.11]{tsybakov2008introduction}, we define (see also Remark {\rm \ref{rmk:Soblev_Holder}})
\begin{align*}
	W_{\rho}^{\beta}(L):=&\Big\{f\in L^2_\rho([0,1]): f^{(\beta-1)} \text{ is a.c.}, \int_0^1 |f^{(\beta)}(x)|^2\rho(dx) \leq L^2\Big\}\,,\\
	W_{0,\rho}^{\beta}(L):=&\Big\{ f\in W_{\rho}^{\beta}(L): f^{(j)}(0)=f^{(j)}(1)=0 \text{ for } j=0,\cdots,\beta-1 \Big\}. 
\end{align*}
This can be viewed as truncation versions of $W_{\rho}^{\beta}, W_{0,\rho}^{\beta}$. When $\rho$ is a uniform measure on $[0,1]$, we omit it and write $W_\rho^\beta$ as $W^\beta$. We use this abbreviation for all the functional spaces or classes. Recalling $0<c\leq \rho'(x)\leq C<\infty$ for all $x\in[0,1]$, it is not hard to see that $W_\rho^\beta=W^\beta$.

When $\beta$ is not an integer, for example, $\beta \in(0, 1)$, following the definition of classical fractional Sobolev space (see, e.g., \cite{Hitchhiker2012}) based on Gagliardo (semi)norm, we can define one dimensional fractional Sobolev space $W_{\rho}^{\beta}=W_{\rho}^{\beta}([0,1])$ as follows
\begin{equation}\label{Def:wSobolev}
	W_{\rho}^{\beta}:=W_{\rho}^{\beta}([0,1])= \Big\{ f\in L_{\rho}^2([0,1]): [f]_{W_{\rho}^{\beta}}< \infty \Big\}\,
\end{equation}
with a norm $\norm{f}_{W_{\rho}^{\beta}} :=\norm{f}_{L_{\rho}^2}+[f]_{W_{\rho}^{\beta}}$,  where the term 
\begin{align}\label{Eq:GagliardoNorm}
	[f]_{W_{\rho}^{\beta}}:=\rbracket{\int_0^1 \int_0^1 \frac{|f(x)-f(y)|^2}{|x-y|^{1+2\beta}}\rho(dx) \rho(dy)}^{\frac 12}
\end{align}
is a weighted Gagliardo (semi)norm of $f$. We shall call $W_{\rho}^{\beta}, W^{\beta}$ the Gagliardo fractional Sobolev spaces or classical fractional Sobolev spaces. 
Similarly, we can define the Gagliardo (or classical) fractional Sobolev class $W_{\rho}^{\beta}(L)$ as a truncation of $W_{\rho}^{\beta}$:
\begin{align}\label{Def:Gagliardo_FSC}
	W_{\rho}^{\beta}(L):=\Big\{ f\in L_{\rho}^2([0,1]): [f]_{W_{\rho}^{\beta}}^2= \int_0^1 \int_0^1 \frac{|f(x)-f(y)|^2}{|x-y|^{1+2\beta}}\rho(dx) \rho(dy) \leq L^2 \Big\}\,.
\end{align} 
Let $W^{\beta}_{0,\rho}=W^{\beta}_{0,\rho}([0,1])$ denote the closure of $C^{\infty}_0([0,1])$ in the norm $\|\cdot\|_{W^{\beta}_{\rho}}$ defined in Eq.  \eqref{Def:wSobolev}. We define the Gagliardo (or classical) fractional Sobolev class $W_{0,\rho}^{\beta}(L)$ when $\beta>1/2$ as:
\begin{equation}\label{Def:Gagliardo_FSC0}
	\begin{aligned}
	W_{0,\rho}^{\beta}(L):=\Big\{ f\in L_{\rho}^2([0,1]):& f \text{ is continuous on $[0,1]$ and } f(0)=f(1)=0 \,, \\
	& [f]_{W_{\rho}^{\beta}}^2= \int_0^1 \int_0^1 \frac{|f(x)-f(y)|^2}{|x-y|^{1+2\beta}}\rho(dx) \rho(dy) \leq L^2\Big\},
\end{aligned}
\end{equation}
Under the assumption of $\rho'$, it is clear that the weighted Sobolev norm and weighted Gagliardo semi-norm are equivalent to the unweighted ones. Then we abbreviate $W^{\beta}_{\rho}$ and $W^{\beta}_{0,\rho}$ to be $W^{\beta}$ and $W^{\beta}_{0}$ as before. 
It is worthwhile to mention that $W^{\beta}\neq W^{\beta}_0$ in general, see \cite[page 528]{Hitchhiker2012}. But as mentioned in \cite[page 5754]{BSV2015DCDS}, the Gagliardo fractional Sobolev spaces $W^{\beta}$ have no trace when $0<\beta\leq 1/2$, therefore, we have $W^{\beta}= W^{\beta}_0$.

The definition in Eq. \eqref{Eq:GagliardoNorm} cannot be plainly extended to the case $\beta\geq 1$. It is strictly related to the following identity (\cite[Eq.~(2.8)]{Hitchhiker2012}):
\begin{align*}
	\lim_{\beta\to 1^-} (1-\beta)\int_0^1 \int_0^1 \frac{|f(x)-f(y)|^2}{|x-y|^{1+2\beta}} dx dy= C\int_0^1 |\nabla f(x)|^2 dx\,, \forall ~ f\in W^1([0,1])
\end{align*}
with some positive constant $C$ independent of $\beta$. 
When $\beta>1$ and is not an integer, we define the fractional Sobolev spaces $W^{\beta}_{\rho}, W^{\beta}_{0,\rho}$ by constraints on the derivatives $f^{(\floor{\beta})}$, similar as in \cite[page 527]{Hitchhiker2012}. We omit the details.

\subsection{Fractional Laplacian, fractional Sobolev spaces and fractional Sobolev classes}
There are also other approaches to define the fractional Sobolev space $H_0^{\beta}([0,1])$ based on the power $\beta\geq 0$ of the Laplacian operator. See for example,  \cite[Section 2 and Section 3]{BSV2015DCDS}, \cite[Section 2]{DiscretizationsFL2018} and \cite[Chapter 1 and Chapter 5]{VariationalMethods2016}. Let $u(t,x):= e^{t\Delta}\phi(x)$ denote the solution of the heat equation with zero Dirichlet boundary condition:
\begin{align*}
	\begin{cases}
		\frac{\partial}{\partial t}u(t,x)=\Delta u(t,x)\,, &t> 0, x\in (0,1) \\
		\text{BC: } u(t,0)=u(t,1)=0 \,, & t\geq 0,\\
		\text{IC: } u(0,x)=\phi(x) & x\in [0,1]\,.
	\end{cases}
\end{align*}
Thus, by separation of variables, the function $u$ can be written as the following series:
\begin{align*}
	u(t,x)=\sum_{k=1}^\infty e^{-\lambda_k t}\theta_k e_k(x)\,, \theta_k=\innerp{\phi, e_k}_{L^2} \,, \quad t\geq 0, x\in [0,1]\,,
\end{align*}
where $\{(\lambda_k,e_k)\}_{k\geq 1}$ are the eigenpairs obtained from the Sturm-Liouville Eigenvalue Problem. In particular, 
\begin{align}\label{Spec:Eigen_pair}
	\lambda_k=\rbracket{k\pi}^2\,, e_k(x)=\sqrt{2} \sin(\sqrt{\lambda_k}x)=\sqrt{2}\sin\rbracket{k\pi x}\,.
\end{align}
The spectral definition of the fractional power of $-\Delta$ relies on the following formulas:
\begin{align}\label{Def:Spec_FLap}
	(-\Delta)^\beta \phi(x)=\sum_{k=1}^\infty \lambda_k^\beta \theta_k \psi_k(x) =\frac{1}{\Gamma(-\beta)}\int_0^\infty [e^{t\Delta}-Id]\phi(x)\frac{dt}{t^{1+\beta}}\,.
\end{align}
with $\Gamma(x)$ being the Gamma function.
\begin{remark}
	The fractional Laplacian $(-\Delta)^\beta$ obtained by using the spectral decomposition of the Laplacian is called (spectral) fractional Laplacian, see {\rm \cite{BSV2015DCDS}} for example. There is also much interest in the restricted fractional Laplacian ({\rm \cite[Section 2]{BSV2015DCDS}}). 
	The eigen-pairs $\{\lambda_k,e_k\}$ may differ when considering the spectral fractional Laplacian or restricted fractional Laplacian. Especially, we have eigen-pair {\rm \eqref{Spec:Eigen_pair}} for the spectral fractional Laplacian. 
\end{remark}
We define the spectral fractional Sobolev space $H_0^{\beta}=H_0^{\beta}([0,1])$ as
\begin{align}\label{Def:Spec_FSob}
	H_0^{\beta}([0,1]):=\Big\{\phi=\sum_{k=1}^\infty \theta_k e_k\in L^2([0,1]): \norm{(-\Delta)^{\beta/2}\phi}_{L^2}^2=\sum_{k=1}^\infty \lambda_k^{\beta}\theta_k^2 <\infty \Big\}
\end{align}
with a norm 
$$
\norm{\phi}_{H_0^\beta}:=\norm{(-\Delta)^{\beta/2}\phi}_{L^2}=\bigg[\sum_{k=1}^\infty \lambda_k^{\beta}\theta_k^2\bigg]^{1/2}=\pi^{\beta}\bigg[\sum_{k=1}^\infty k^{2\beta}\theta_k^2\bigg]^{1/2}\,.
$$
Moreover, $H_0^\beta([0,1])=W^{\beta}_0([0,1])$ when $\beta\in(0, 1/2)\cup (1/2,1)$. We refer \cite[Section 3.1]{BSV2015DCDS} and \cite[Chapter 5]{VariationalMethods2016} for more details. We summarize the relationship between fractional Sobolev spaces $H_0^{\beta}([0,1])$ and classical Sobolev spaces $W^{\beta}([0,1]), W^{\beta}_0([0,1])$ in the following:
\begin{equation}\label{Eq:Sob_relas}
\begin{aligned}
	H_0^{\beta}([0,1])=
	\begin{cases}
		W^{\beta}_0([0,1])=W^{\beta}([0,1])\,, &\text{when } 0<\beta< \frac 12\,;\\
		W^{\beta}_0([0,1])\subseteq W^{\beta}([0,1])\,, &\text{when } \frac 12<\beta< 1\,.
	\end{cases}
\end{aligned}
\end{equation}
When $\beta=1/2$, $H_0^{\beta}([0,1])$ can be identified as the Lions-Magenes space denoted by $W_{00}^{\frac 12}([0,1])$ characterized by an interpolation norm \cite[Eq.(116)]{BSV2015DCDS}.

If the weight $\rho$ admits a density $\rho'$ that is bounded below and bounded above, we can define the weighted fraction Sobolev space $H^\beta_{0,\rho}$ simply by replacing $e_k$ by $\psi_k=e_k/\sqrt{\rho'}$. It is clear $H^\beta_{0,\rho}=H_0^\beta$ since every function $f\in H^\beta_{0,\rho}$ can be identified as $f=\phi/\sqrt{\rho'}$ for some $\phi\in H_0^\beta$ and vice versa. 
We then define a truncation version of (spectral) fractional Sobolev space $H_0^\beta([0,1])$ as 
\begin{align}\label{Def:Spectral_FSC}
	H_0^{\beta}(L)=\left\{\phi = \sum_{k=1}^\infty \theta_k e_k \in L^2([0,1]): \theta_k=\innerp{\phi,e_k}_{L^2}, \forall k\geq 1 \text{ and }\sum_{k=1}^\infty \lambda_k^{\beta}\theta_k^2 \leq L^2 \right\}
\end{align}
with eigen-pair $\{(\lambda_k,e_k)\}_{k\geq 1}$ defined in \eqref{Spec:Eigen_pair}. 
The weighted fractional Sobolev class $H^\beta_{0,\rho}(L)$ in Definition \ref{def:wSobolev} can be viewed as a truncation of the space $H^\beta_{0,\rho}$. 

Following the spirit of relationship \eqref{Eq:Sob_relas}, we should expect $H_0^{\beta}(L)\subseteq W^{\beta}(Q)$ when $\beta\in(0,1)$ and $H_0^{\beta}(L)\subseteq  W^{\beta}_0(Q)$ when $\beta\in (1/2,1)$ for some positive constant $Q$ may depend on $\beta,L$. The reverse inclusions, i.e.,  $W^{\beta}_0(Q)\subseteq H_0^{\beta}(L)$ and $W^{\beta}(Q) \subseteq H_0^{\beta}(L)$ present greater challenges especially at the transition point of $\beta=1/2$. Future investigations will delve into this direction.
\begin{lemma}\label{Lem:spectral<Gagliardo}  
	Let $\beta\in(0,1)$, $L>0$ and $H_0^{\beta}(L)$ be the spectral fractional Sobolev class defined in \eqref{Def:Spectral_FSC}. There exist $Q=Q_{\beta,L}$ depending on $\beta,L$ and a Gagliardo fractional Sobolev class $W^\beta(Q)$ defined in \eqref{Def:Gagliardo_FSC} such that 
	\begin{align*}
    H_0^{\beta}(L) \subseteq 	W^{\beta}(Q)\,.
	\end{align*}
	Moreover, when $\beta\in(1/2,1)$ we have 
	\begin{align*}
	H_0^{\beta}(L) \subseteq 	W^{\beta}_0(Q)
	\end{align*}
	where $W^{\beta}_0(Q)$ denotes the Gagliardo fractional Sobolev class with zero boundary conditions defined in \eqref{Def:Gagliardo_FSC0}.
\end{lemma}
\begin{proof} 
	Let $\phi(x)=\sum_{k=1}^{\infty} \theta_k e_k(x)\in H_0^{\beta}(L)$. So we have $\sum_{k=1}^{\infty} \lambda_k^{\beta}\theta_k^2\leq L^2$. We proceed to show that 
	\begin{align*}
		\int_0^1\int_0^1 \frac{|\phi(x)-\phi(y)|^2}{|x-y|^{1+2\beta}}dxdy \leq Q^2
	\end{align*}
	with $Q=Q_{\beta,L}$ to be determined. 	
	Changing of variables implies that 
\begin{align*}
	\int_0^1\int_0^1 \frac{|\phi(x)-\phi(y)|^2}{|x-y|^{1+2\beta}}dxdy
	&=\int_0^1\int_{x-1}^{x} \frac{|\phi(x)-\phi(x-h)|^2}{|h|^{1+2\beta}}dhdx \\
	&\leq \int_0^1\int_{-1}^{1} \frac{|\phi(x)-\phi(x-h)|^2}{|h|^{1+2\beta}}dhdx \\
	&=\int_0^1\int_{0}^{1} \frac{|\phi(x)-\phi(x-h)|^2}{h^{1+2\beta}}dhdx \\
	&+\int_0^1\int_{0}^{1} \frac{|\phi(x)-\phi(x+h)|^2}{h^{1+2\beta}}dhdx \,.
\end{align*}
It suffices to prove the following two inequalities:
\begin{align}\label{Ineqs:Gagliardo}
	\int_0^1\int_{0}^{1} \frac{|\phi(x)-\phi(x-h)|^2}{h^{1+2\beta}}dhdx \leq \frac{Q^2}{2}\,, \text{ and } \int_0^1\int_{0}^{1} \frac{|\phi(x)-\phi(x+h)|^2}{h^{1+2\beta}}dhdx \leq \frac{Q^2}{2}\,.
\end{align}
It is clear that we only need to show the first inequality. The second one can be done similarly. By Parseval's identity, we have
\begin{align*}
	\int_0^1 |\phi(x)-\phi(x-h)|^2 dx = \sum_{k=1}^{\infty} \theta_k^2 |e^{2\pi h k}-1|^2=2\sum_{k=1}^{\infty} \theta_k^2 [1-\cos(2\pi hk)]\,.
\end{align*}
Therefore we get
\begin{align*}
	\int_0^1\int_{0}^{1} \frac{|\phi(x)-\phi(x-h)|^2}{h^{1+2\beta}}dhdx &= 2\sum_{k=1}^{\infty}\theta_k^2 \int_0^1 \frac{1-\cos(2\pi hk)}{h^{1+2\beta}}dh \\
	&\leq 2\sum_{k=1}^{\infty}\theta_k^2 (2\pi k)^{2\beta} \int_0^{\infty} \frac{1-\cos(h)}{h^{1+2\beta}}dh \\
	&= 2C_{\beta} \sum_{k=1}^{\infty} \lambda_k^{\beta} \theta_k^2 \leq 2C_{\beta} L^2\,,
\end{align*}
where $C_{\beta}=2^{2\beta} \int_0^{\infty} \frac{1-\cos(h)}{h^{1+2\beta}}dh$ is a positive constant. Letting $Q^2=4C_{\beta} L^2$, we have the first inequality in Eq. \eqref{Ineqs:Gagliardo} holds. Thus, we get $H_0^{\beta}(L) \subseteq 	W^{\beta}(Q)$.

By the H\"older regularity result of fractional Sobolev space (e.g. \cite[Theorem 8.2]{Hitchhiker2012}), we know that $\phi\in H^{\beta}(L) \subseteq W^{\beta}(Q)\subseteq \calC^{\beta-\frac 12}([0,1])$ when $\beta\in(1/2,1)$. Thus, we need to show more that if $\phi\in H^{\beta}(L)$ then $\phi(0)=\phi(1)=0$. For any $\phi=\sum_{k=1}^{\infty} \theta_k e_k \in H^{\beta}(L)$, let $\phi_{N}=\sum_{k=1}^{N}\theta_k e_k$. It is easy to see that $\phi_N(0)=\phi_N(1)=0$. All we need to do is to prove $\phi_N \to \phi$ uniformly on $[0,1]$. 
By Cauchy-Schwarz inequality, it is clear that for any $x\in[0,1]$
\begin{align*}
	|\phi(x)-\phi_{N}(x)|&=|\sum_{k=N+1}^{\infty} \theta_k e_k(x)| \\
	&\leq \bigg( \sum_{k=N+1}^{\infty} \theta_k^2 \lambda_k^{\beta} \bigg)^{\frac 12} \bigg( \sum_{k=N+1}^{\infty} \lambda_k^{-\beta} |e_k(x)|^2 \bigg)^{\frac 12} \\
	&\leq  \frac{\sqrt{2} Q}{\pi^{\beta}} \bigg( \sum_{k=N+1}^{\infty} \frac{1}{k^{2\beta}} \bigg)^{\frac 12} \to 0
\end{align*}
as $N\to \infty$. Thus, we can conclude that $H_0^{\beta}(L) \subseteq 	W^{\beta}_0(Q)$.
\end{proof}

\subsection{Fractional Sobolev classes and H\"older class}
Let us recall the H\"older class in Definition {\rm \ref{def:H\"older}}. For $\beta,L>0$, the H\"older class $\calC^{\beta}(L)$ on $[0,1]$ is 
\begin{equation}\label{Def:Holder-class}
\begin{aligned}
	\calC_0^{\beta}(L)=\Big\{\phi:[0,1] \to \R :& | \phi^{(l)}(x)-\phi^{(l)}(y)|\leq L| x -y |^{\beta-l}, \forall x,y \in [0,1] \,, \\
	& \phi^{(j)}(0)=\phi^{(j)}(1)=0, j=0,\cdots,l-1 \Big\},
\end{aligned}
\end{equation}
 where $\phi^{(j)}$  denotes the $j$-th order derivative of functions $f$ and $l =\lfloor\beta\rfloor$. If $\beta$ is an integer and $\rho(dx)=\rho'(x)dx$ with density $\rho'$ is bounded below and bounded above, the weighted Sobolev class $H_{0,\rho}^{\beta}(L)$ is equivalent to $W_{0,\rho}^{\beta}(L)$ by the same proof for {\rm\cite[Proposition 1.14]{tsybakov2008introduction}} when the basis functions are the weighted trigonometric functions. Combining these two facts, we obtain that $\calC_0^{\beta}(L)\subseteq H_{0,\rho}^{\beta}(L)$.
If $\beta$ is not an integer, for example $\beta\in(0,1)$, one can observe that $\phi\in \calC_0^{\beta}(L)$ is not guaranteed to belong to $W_{\rho}^\beta$ defined in \eqref{Def:wSobolev}. Therefore $\phi$ is not necessary in $H_{0,\rho}^{\beta}(L)$ which is a subset of $W_{\rho}^\beta$. Instead, an improvement on the regularity of the H\"older class is needed to ensure inclusion in a fractional Sobolev class. In particular, we have the following result.
\begin{lemma}\label{Lem:Holder<spectral}   
	Let $\beta\in\bigcup_{l=0}^{\infty} (l,l+1)$, $L>0$ and $H_0^{\beta}(L)$ be the spectral fractional Sobolev class defined in Eq. \eqref{Def:Spectral_FSC}. Then for any $\alpha\in(\beta,1)$, there exist $Q=Q_{\alpha, \beta,L}$ depending on $\alpha, \beta,L$ and H\"older class $\calC_0^{\alpha}(Q)$ defined in \eqref{Def:Holder-class} such that 
	\begin{align*}
    \calC_0^{\alpha}(Q) \subseteq H_0^{\beta}(L)\,.
	\end{align*}
\end{lemma}

\begin{proof}
Without loss of generality, we only need to discuss $\beta\in(0,1)$. Let $L>0$, $H_0^{\beta}(L)$ and $\phi\in \calC_0^\alpha(Q),$ with $\beta<\alpha<1$ and $Q=Q_{\alpha, \beta,L}$ to be determined. This is, we assume
\begin{align}\label{Ineq:phi_C_alpha}
	\sup_{x,y\in [0,1]}\frac{|\phi(x)-\phi(y)|}{|x-y|^{\alpha}}\leq Q\,, \text{ and } \phi(0)=\phi(1)=0\,.
\end{align}
We need to show that
\begin{align*}
	\norm{(-\Delta)^{\beta/2} \phi}_{L^2}=\int_0^1 \bigg| \frac{1}{\Gamma(-\beta/2)}\int_0^\infty [e^{t\Delta}-Id]\phi(x)\frac{dt}{t^{1+\frac{\beta}{2}}} \bigg|^2 dx \leq L^2\,.
\end{align*}
Notice that
\begin{align*}
	\frac{1}{\Gamma(-\beta/2)}\int_0^\infty [e^{t\Delta}-Id]\phi(x)\frac{dt}{t^{1+\frac{\beta}{2}}} = \frac{\Gamma(\beta/2)}{\Gamma(1-\beta/2)}\int_0^\infty \bigg[\phi(x)-\int_0^1 g_{t}(x,y)\phi(y)dy \bigg] \frac{dt}{t^{1+\frac{\beta}{2}}}\,,
\end{align*}
where 
\begin{align*}
	g_{t}(x,y) = \frac{1}{\sqrt{4\pi t}} \sum_{n=-\infty}^\infty \bigg[ \exp\bigg(-\frac{(x-y-2n)^2}{4t}\bigg)-\exp\bigg(-\frac{(x+y-2n)^2}{4t}\bigg) \bigg]
\end{align*}
is the one-dimensional Green function of the Heat equation on $[0,1]$. Since $g_{t}(x,y)$ is good kernel and \eqref{Ineq:phi_C_alpha}, we have
\begin{align*}
	\norm{(-\Delta)^{\beta/2} \phi}_{L^2}^2=&\int_0^1 \bigg| \frac{\Gamma(\beta/2)}{\Gamma(1-\beta/2)}\int_0^\infty \int_0^1 g_{t}(x,y)\big[\phi(x)-\phi(y)\big]dy  \frac{dt}{t^{1+\frac{\beta}{2}}} \bigg|^2 dx \\
	\leq& \int_0^1 \bigg| \frac{\Gamma(\beta/2)}{\Gamma(1-\beta/2)}\int_0^\infty \int_0^1 |g_{t}(x,y)|\big|\phi(x)-\phi(y)\big|dy  \frac{dt}{t^{1+\frac{\beta}{2}}} \bigg|^2 dx \\
	\leq& Q^2 \int_0^1 \bigg| \frac{\Gamma(\beta/2)}{\Gamma(1-\beta/2)}\int_0^\infty \int_0^1 |g_{t}(x,y)|\cdot |x-y|^{\alpha} dy  \frac{dt}{t^{1+\frac{\beta}{2}}} \bigg|^2 dx\,.
\end{align*} 
Then, employing the Gaussian upper bound for Heat kernel $g_t(x,y)$:
\begin{align*}
	|g_t(x,y)|\leq \frac{C_1}{\sqrt{t}}\exp\bigg(-\frac{C_2(x-y)^2}{t}\bigg)
\end{align*}
with some positive universal constants $C_1,C_2$ for every $t>0$ and $x,y\in[0,1]$. We proceed to get
\begin{align*}
	\int_0^1 |g_{t}(x,y)||x-y|^{\alpha} dx \leq& \min\bigg\{ \int_{\R} \frac{C_1}{\sqrt{t}}\exp\bigg(-\frac{C_2(x-y)^2}{t}\bigg) dx, \\
	&\qquad\quad \int_{\R} \frac{C_1}{\sqrt{t}}\exp\bigg(-\frac{C_2(x-y)^2}{t}\bigg) |x-y|^{\alpha} dx \bigg\} \\
	=&\min\bigg\{\int_{\R} \frac{C_1}{\sqrt{t}}\exp\bigg(-\frac{C_2 x^2}{t}\bigg) dx , C_1 \int_{\R} \exp\bigg(-\frac{2C_2 x^2}{t}\bigg)|x|^{\alpha} \frac{dx}{\sqrt{t}} \bigg\}\\
	\leq& C_{\alpha} (1\wedge t^{\frac{\alpha}{2}})
\end{align*}
with a universal positive constant $C_{\alpha}$ only depends on $\alpha$ and $C_1,C_2$. We then get
\begin{align*}
	\norm{(-\Delta)^{\beta/2} \phi}_{L^2}^2 \leq& C_{\alpha}^2 L^2 \bigg| \frac{\Gamma(\beta/2)}{\Gamma(1-\beta/2)}\bigg|^2 \int_0^1 \int_0^\infty (1\wedge t^{\frac{\alpha}{2}})\frac{dt}{t^{1+\frac{\beta}{2}}} dx \\
	\leq&  C_{\alpha,\beta}^2 Q^2
\end{align*}
for some positive constant $C_{\alpha,\beta}$ when $\alpha>\beta$. Letting $Q=L/C_{\alpha,\beta}$, we finish the proof.
\end{proof}

We summarize all the notations of functional spaces and classes in the following Table \ref{table:notationsSob}.
\begin{table}[h!]
\caption{Notations for Sobolev, fractional Sobolev, and H\"older spaces and classes.}
\label{table:notationsSob}
\centering
\begin{tabular}{|c||c|c|c|c|}
    \hline 
      & \textbf{Integer Sobolev} & \textbf{Gagliardo Sobolev} & \textbf{Spectral Sobolev} & \textbf{H\"older} \\ 
    \hline \hline
    \textbf{Spaces}    & $W_{\rho}^\beta, W_{0,\rho}^\beta$ ($\beta\in\mathbb{N}$)    & $W_{\rho}^\beta, W_{0,\rho}^\beta$ ($\beta\notin\mathbb{N}$)     & $H_{\rho}^\beta, H_{0,\rho}^\beta$    & $\calC^{\beta}, \calC^{\beta}_0$     \\ 
    \hline
    \textbf{Classes}   & $W_{\rho}^\beta(L), W_{0,\rho}^\beta(L)$ ($\beta\in\mathbb{N}$)     & $W_{\rho}^\beta(L), W_{0,\rho}^\beta(L)$ ($\beta\notin\mathbb{N}$)     & $H_{\rho}^\beta(L), H_{0,\rho}^\beta(L)$     & $\calC^{\beta}(L), \calC^{\beta}_0(L)$     \\ 
    \hline
\end{tabular}
\end{table}
In the specific case of a uniform measure $\rho$ on the interval [0, 1], we adopt a simplified notation for Sobolev spaces and classes by omitting the explicit reference to $\rho$. For instance, $W_{\rho}^{\beta}$ is simplified to be $W^{\beta}$, and $H_{\rho}^{\beta}(L)$ is represented as $H^{\beta}(L)$.

\section{Technical results and proofs for the lower bound}\label{sec:append2}
In this section, we present the remaining proofs of Lemma \ref{lemma:construction} and the main result Theorem \ref{thm:L2rho_lower}. 

\subsection{Constructions of the hypotheses for the lower bound and Proof of Lemma \ref{lemma:construction}}\label{Sec:Constructions}
We prove Lemma \ref{lemma:construction} by directly constructing the hypothesis functions $\{\phi_{0,M}, \cdots, \phi_{K,M}\}$ satisfying Conditions \ref{Cond:a}--\ref{Cond:c} there, that is, they are H\"older-continuous, $2s$-separated in $L^2_\rho$, and they induce hypotheses satisfying a Kullback-Leibler divergence upper bound.    

The construction consists of two steps:
\begin{itemize}[leftmargin=+.58in]
\item[Step 1:] construct $\bar K$ disjoint equidistance intervals with a proper length in support of the exploration measure $\rho$,  
\item[Step 2:] define the hypothesis functions as a linear combination of $\bar K$ functions supported in these disjoint intervals with binary coefficients, and prove that these hypothesis functions satisfy Conditions \ref{Cond:a}--\ref{Cond:c} in Lemma \ref{lemma:construction}.  
\end{itemize}
The second step largely follows the proof in \cite[page 103]{tsybakov2008introduction}, particularly, the Varshamov-Gilbert bound leads to an upper bound for the Kullback-Leibler divergence of the hypothesis. Our main innovation is the first step, constructing disjoint equidistance intervals in support of the measure $\rho$. Note that the lower bound case does not require the density function to be uniformly bounded below by a positive number.

We let $\psi\in \calC^\beta_0 (\frac12)\bigcap \calC^{\infty}(\R)$ be a bounded nonnegative smooth function: 
\begin{equation} \label{def:smooth-fn}
\psi(u ) = e\phi_0(2u), \quad  \phi_0(u) = e^{-\frac{1}{1-u^2}} \mathbf{1}_{|u| < 1}. 
\end{equation}
Note that $\psi({u})>0$ if and only if  ${u}\in(-1/2,1/2)$, and $\|\psi\|_{\infty} =\max_{x}\psi(x) =e \phi_0 (0) =  1$. 

We recall the Varshamov-Gilbert bound in \cite[Lemma 2.9]{tsybakov2008introduction} that we state here without proof. 
\begin{lemma}[Varshamov-Gilbert bound]\label{lemma:VG}
    Let $\bar K\geq 8$. Then there exists a subset $\{\omega^{(0)},\cdots, \omega^{(K)}\}$ of $~\Omega:=\{0,1\}^{\bar{K}}$ such that $\omega^{(0)}=(0,\cdots,0)$ and
    \begin{equation}
        K\geq 2^{\bar K/8}\,, \quad \text{and} \quad \rho_{H}(\omega^{(j)},\omega^{(k)})\geq \frac {\bar K}{8}\,, \forall ~ 0\leq j<k\leq K\,,
    \end{equation}
    where $\rho_{H}(\omega,\omega')=\sum_{l=1}^{\bar K}\mathbf{1}(\omega_l \neq \omega'_l)$ is called the \emph{Hamming distance} between two binary sequences $\omega=(\omega_1,\cdots,\omega_{\bar K})$ and $\omega'=(\omega'_1,\cdots,\omega'_{\bar K})$. 
\end{lemma}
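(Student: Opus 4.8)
\textbf{Plan for the proof of Lemma \ref{lemma:construction} (construction of the hypotheses).}

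The plan is to follow the two-step scheme announced after the lemma. \textbf{Step 1 (carving disjoint intervals inside $\supp\rho$).} By Lemma \ref{lemma:rho}, $\rho$ has a continuous density $\rho'$ on $[0,1]$; pick a point $r_0$ in the interior of $\supp\rho$ and a radius $\delta>0$ so that $\rho'\geq c_\rho>0$ on $[r_0-\delta, r_0+\delta]$. Set $\bar K = \lceil c_{0,N} M^{\frac{1}{2\beta+1}}\rceil$ with $c_{0,N}=C_0 N^{\frac1{2\beta+1}}$ for a constant $C_0$ to be fixed, and partition $[r_0-\delta,r_0+\delta]$ into $\bar K$ disjoint subintervals $I_1,\dots,I_{\bar K}$ of equal length $2h_M$ where $h_M = \delta/\bar K \asymp \bar K^{-1} \asymp (c_{0,N})^{-1} M^{-\frac1{2\beta+1}}$. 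Let $x_l$ be the center of $I_l$. \textbf{Step 2 (bump functions and binary coefficients).} Using the smooth bump $\psi$ from \eqref{def:smooth-fn} (which lies in $\calC(\beta,1/2)$ since $\psi\in \calC^\infty\cap\calC(\beta,1/2)$), define $\varphi_l(r) = L h_M^{\beta}\, \psi\!\left(\frac{r-x_l}{h_M}\right)$, supported in $I_l$, and for each binary string $\omega=(\omega_1,\dots,\omega_{\bar K})\in\{0,1\}^{\bar K}$ set $\phi_\omega = \sum_{l=1}^{\bar K}\omega_l \varphi_l$. Finally apply the Varshamov–Gilbert bound (Lemma \ref{lemma:VG}, valid once $\bar K\geq 8$, i.e.\ for $M$ large) to extract strings $\omega^{(0)}=0,\omega^{(1)},\dots,\omega^{(K)}$ with $K\geq 2^{\bar K/8}$ and pairwise Hamming distance $\geq \bar K/8$; define $\phi_{k,M}=\phi_{\omega^{(k)}}$, so $\phi_{0,M}\equiv 0$.

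\textbf{Verification of (C1)–(C3).} For \ref{Cond:a}: since the $\varphi_l$ have disjoint supports, the H\"older seminorm of $\phi_{k,M}$ equals $\max_l$ of that of $\varphi_l$; by the scaling of the Gagliardo/H\"older seminorm under $r\mapsto (r-x_l)/h_M$, the factor $L h_M^\beta$ against $h_M^{-\beta}$ from $\lfloor\beta\rfloor$ derivatives plus the $|\cdot|^{\beta-\lfloor\beta\rfloor}$ remainder yields $\phi_{k,M}\in\calC(\beta, c\,L\|\psi\|_{\calC(\beta,1/2)}) \subseteq \calC(\beta,L)$ after absorbing constants (this is where the factor $1/2$ in $\psi\in\calC(\beta,1/2)$ and a suitable normalization of the amplitude are used; one may need to shrink the amplitude by a universal constant). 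For \ref{Cond:b}: by disjoint supports and $\rho'\geq c_\rho$ on each $I_l$,
\[
\|\phi_{k,M}-\phi_{k',M}\|_{L^2_\rho}^2 = \sum_{l=1}^{\bar K}\mathbf{1}(\omega^{(k)}_l\neq\omega^{(k')}_l)\int_{I_l}\varphi_l^2\,\rho'(r)\,dr \geq c_\rho \,\rho_H(\omega^{(k)},\omega^{(k')})\, L^2 h_M^{2\beta}\!\int \psi(u)^2 h_M\,du,
\]
and since $\rho_H\geq \bar K/8$ and $\bar K h_M = \delta$, the right side is $\gtrsim c_\rho L^2 \|\psi\|_{L^2}^2\,\delta\, h_M^{2\beta}/8 \asymp c_{0,N}^{-2\beta} M^{-\frac{2\beta}{2\beta+1}}$, giving $(2s_{N,M})^2$ with $s_{N,M}=C_1 c_{0,N}^{-\beta}M^{-\frac{\beta}{2\beta+1}}$ for an explicit $C_1$. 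For \ref{Cond:c}: condition on the design $X^1,\dots,X^M$; since $\phi_{0,M}\equiv 0$ we have $\bar\P_k\ll\bar\P_0$, and using $Y^m=R_{\phi_{k,M}}[X^m]+\eta^m$ together with Assumption \ref{Assump:Density_Noise} (the KL-smoothness $\int p_\eta\log(p_\eta/p_\eta(\cdot+v))\leq c_\eta\|v\|^2$),
\[
\KL(\bar\P_k,\bar\P_0)=\sum_{m=1}^M \int p_\eta(u)\log\frac{p_\eta(u)}{p_\eta(u+R_{\phi_{k,M}}[X^m])}\,du \leq c_\eta\sum_{m=1}^M \|R_{\phi_{k,M}}[X^m]\|_{\R^{Nd}}^2.
\]
Bound each $\|R_{\phi_{k,M}}[X^m]\|^2 \leq N\|\phi_{k,M}\|_\infty^2 \leq N\,(L h_M^\beta)^2$ (since $\|\psi\|_\infty=1$ and the supports are disjoint so only one bump is hit per pair), so $\KL(\bar\P_k,\bar\P_0)\leq c_\eta N L^2 h_M^{2\beta} M \asymp c_\eta N L^2 c_{0,N}^{-2\beta} M^{\frac1{2\beta+1}}$. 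Meanwhile $\log K \geq (\bar K/8)\log 2 \asymp c_{0,N} M^{\frac1{2\beta+1}}$. Hence
\[
\frac1K\sum_{k=1}^K \KL(\bar\P_k,\bar\P_0) \leq \frac{c_\eta N L^2 c_{0,N}^{-2\beta} M^{\frac1{2\beta+1}}}{\tfrac{\log 2}{8} c_{0,N} M^{\frac1{2\beta+1}}}(1+o(1)) = \frac{8 c_\eta N L^2}{\log 2}\, c_{0,N}^{-(2\beta+1)}(1+o(1)).
\]
With $c_{0,N}=C_0 N^{\frac1{2\beta+1}}$ this equals $\frac{8c_\eta L^2}{C_0^{2\beta+1}\log 2}(1+o(1))$, which is $<\tfrac18\log K \cdot(\text{relative form})$; more precisely it is $\leq \alpha\log K$ with $\alpha = \frac{8c_\eta N L^2 c_{0,N}^{-(2\beta+1)}}{\log K / M^{\frac1{2\beta+1}}}$, and choosing $C_0$ large enough (depending only on $c_\eta, L, \beta$, \emph{not} on $N$, thanks to the $N$-factor canceling against $c_{0,N}^{-(2\beta+1)}=C_0^{-(2\beta+1)}N^{-1}$) forces $\alpha<1/8$ for all $N\geq 3$ and all large $M$.

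\textbf{Main obstacle.} The delicate point is \textbf{Step 1}: producing $\bar K \asymp M^{\frac1{2\beta+1}}$ genuinely disjoint intervals on which $\rho'$ is bounded below, using only that $\rho'$ is continuous (Lemma \ref{lemma:rho}) — one must locate an interior subinterval of $\supp\rho$ where $\rho'$ stays above a positive constant, which requires $\rho$ to be nondegenerate there; for the uniform example $\rho'(r)=2(1-r)$ this is immediate on, say, $[0,1/2]$. The second subtlety is bookkeeping the constants in (C3) so that the cancellation $N \cdot c_{0,N}^{-(2\beta+1)} = N\cdot C_0^{-(2\beta+1)} N^{-1} = C_0^{-(2\beta+1)}$ is exploited; this is precisely why the exponent $\frac1{2\beta+1}$ on $N$ in $c_{0,N}$ is the smallest possible (cf.\ Remark \ref{rmk:rateN2}). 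The H\"older-norm scaling in (C1) and the $L^2_\rho$-separation in (C2) are routine once the amplitude $L h_M^\beta$ is fixed.
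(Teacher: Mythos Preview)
Your proposal does not address the stated lemma at all. The statement in question is the Varshamov--Gilbert bound (Lemma~\ref{lemma:VG}), which the paper does \emph{not} prove: it is merely recalled from \cite[Lemma~2.9]{tsybakov2008introduction} as a classical combinatorial fact, and no argument is given. Your write-up is instead a proof plan for Lemma~\ref{lemma:construction} (the construction of the hypothesis functions $\phi_{0,M},\dots,\phi_{K,M}$), in which the Varshamov--Gilbert bound is \emph{invoked} as an ingredient. So there is a basic mismatch between the target statement and what you wrote.

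If your intent was Lemma~\ref{lemma:construction}, then your plan is broadly aligned with the paper's proof, with one substantive difference in Step~1. You locate a single interval $[r_0-\delta,r_0+\delta]$ on which $\rho'\geq c_\rho>0$ by continuity at an interior point of $\supp\rho$. The paper is more careful here: it allows $\rho'$ to fail to be uniformly bounded below and instead works with the open superlevel set $A_0=\{r:\rho'(r)>\underline{a_0}\}$, decomposes it into countably many open intervals, and packs the $\bar K$ disjoint subintervals across finitely many of them (introducing the auxiliary integer $n_0$ and the length parameter $L_0$). Your simpler construction works whenever $\rho'$ is strictly positive somewhere in the interior, which is guaranteed by continuity and $\int\rho'=1$, so the extra generality in the paper is not strictly needed under Assumption~\ref{Assump:cts}; but you should state explicitly why such an interior interval exists. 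The remaining verifications of \ref{Cond:a}--\ref{Cond:c} in your plan match the paper's argument essentially line by line, including the cancellation $N\cdot c_{0,N}^{-(2\beta+1)}=C_0^{-(2\beta+1)}$ that fixes $\alpha<1/8$ uniformly in $N$.
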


\begin{proof}[Proof of Lemma \ref{lemma:construction}] 
The proof consists of two steps. 

{\bf Step 1:} we construct $\bar K=\lceil c_{0,N} M^{\frac{1}{2\beta+1}}\rceil$ disjoint equidistance intervals 
\begin{equation}\label{eq:h_M}
\{\Delta_\ell = (r_\ell-h_M, r_\ell+h_M)\}_{\ell=1}^{\bar K}, \quad \text{with } h_{M}=\frac{L_0}{8n_0\bar K}\,, 
\end{equation}
 where the numbers $\{r_\ell\}_{\ell=1}^{\bar K}$, $n_0$, and $L_0$ are to be specified next according to $\rho$ so that $\{r_\ell\}_{\ell=1}^{\bar K}\subseteq \supp(\rho) \bigcap [0,1]$ and $n_0\geq 1$. Here the constant $c_{0,N}=C_0 N^{\frac{1}{2\beta+1}}$ is defined in Eq. \eqref{eq:K}.
 
Note that if $\rho$ has a density function that is bounded from below by $\underline{a_0}>0$, we can simply use the uniform partition of $\supp(\rho)$ to obtain the desired $\{\Delta_l\}$. That is, we set $n_0=1$, $L_0= 4$, and $r_\ell=(2 \ell -1) h_M$. Since the density $\rho'$ may not be bounded below by a positive constant in general, we use the continuity of the density function as follows.

By Lemma \ref{lemma:rho}, the exploration measure $\rho$ has a density function $\rho'$ that is continuous on the interval $[0,1]$. Then, the quantity $a_0 = \sup_{r\in [0,1]} \rho'(r)$ is finite. Let $\underline{a_0} < a_0\wedge 1$ be a fixed constant.   

We proceed to construct the intervals in Eq. \eqref{eq:h_M} satisfying $\bigcup_{\ell=1}^{\bar K} \Delta_\ell \subseteq A_0:=\{r\in [0,1]:\rho'(r)>\underline{a_0}\}$. Let $ L_0:= \frac{1-\underline{a_0}}{a_0-\underline{a_0}}$.  Note that $\text{Leb}(A_0)\geq L_0$ since 
\begin{align*}
	1&=\int_0^1 \rho'(r)dr=\int_{A_0} \rho'(r)dr+\int_{A_0^c} \rho'(r)dr \\
	&\leq  a_0 \text{Leb}(A_0)+\underline{a_0}[1-\text{Leb}(A_0)]\,.
\end{align*}

Also, note that the set $A_0$ is open by the continuity of $\rho'$. Thus, there exist disjoint intervals $(a_j,b_j)$ such that $A_0= \bigcup_{j=1}^{\infty}(a_j,b_j)$. Without loss of generality, we assume that these intervals are ordered in ascending order according to their length $b_j-a_j$. Let 
\begin{align*}
	n_0 = \min\left\{n:\sum_{j=1}^{n}(b_j-a_j) > \frac{L_0}{2} \right\}\,.
\end{align*}
It is clear that $n_0\geq 1$. We begin by constructing disjoint intervals $\{\Delta_\ell = (r_\ell-h_M, r_\ell+h_M)\}_{\ell =1}^{n_1}\subseteq (a_1,b_1)$ such that $r_\ell = a_1+\ell h_M$ and $n_1= \floor{(b_1-a_1)/(2h_M)}$.  If $n_1\geq \bar K$, we stop. Otherwise, we continue by constructing additional disjoint intervals $\{\Delta_\ell = (r_\ell-h_M, r_\ell+h_M)\}_{\ell =n_1+1}^{n_1+n_2}\subseteq (a_2,b_2)$ similarly, and continue to $(a_j,b_j)$ until obtain $\bar K$ such intervals.  It remains to show that the cardinality of the constructed collection $\{\Delta_\ell\}$ can exceed $\bar K$. Let $K_*=n_1+n_2+\cdots$ denote the total number of intervals $\{\Delta_\ell\}_{\ell=1}^{K_*}$ to traverse all $\{(a_j, b_j)\}_{j=1}^{n_0}$ without stopping. It suffices to show that $K_*\geq \bar K$. 
Since the Lebesgue measure of $(a_j,b_j)\backslash \bigcup_{\ell=1}^{K_*} \Delta_\ell$ is less than $2h_M$ for each $j$, the Lebesgue measure of the uncovered parts $\bigcup_{j=1}^{n_0}(a_j,b_j) \backslash \big( \bigcup_{\ell=1}^{K_*} \Delta_\ell \big)$ is at most $2n_0h_M $. Thus, the intervals $\{\Delta_\ell\}_{\ell=1}^{K_*}$ must have a total length no less than $\frac{L_0}{2} -2n_0h_M $. Consequently, the total number must satisfy $K_*\geq (\frac{L_0}{2} -2n_0h_M  ) /(2h_M) = 2 \bar  K n_0- n_0 \geq \bar K $.

{\bf Step 2}: construct hypothesis functions satisfying Conditions \ref{Cond:a}--\ref{Cond:c} in Lemma \ref{lemma:construction}. We first define $2^{\bar K}$ functions, from which we will select a subset of 2s-separated hypothesis functions,   
\[ \phi_{\omega}(r)=\sum_{\ell=1}^{\bar K} \omega_\ell \psi_{\ell,M}(r), \quad \omega=(\omega_1,\cdots,\omega_{\bar K})\in \Omega=\{0,1\}^{\bar K}, \quad r\in[0,1]
\] 
where the basis functions are  
\begin{equation}
	\psi_{\ell,M}(r)
	:=L h_M^\beta \psi\bigg(\frac{r-r_\ell}{h_M} \bigg)\,, \quad \ell=1,\cdots, \bar K, \quad r\in[0,1]
\end{equation}
with $\psi(u)=e^{1-\frac{1}{1-(2u)^2}}\textbf{1}_{|u|< 1/2}$ as in Eq. \eqref{def:smooth-fn}.  Note that the support of $\psi_{\ell,M}(r)$ is $\Delta_\ell$, and $\|\psi_{\ell,M}\|_{L^2}=(\int_{\Delta_\ell}|\psi_{\ell,M}(r)|^2 dr)^{1/2} = L h_M^{\beta+\frac 12}\|\psi\|_{L^2}$. Recall that we use the notation $\|\cdot\|_{L^2}:=\|\cdot\|_{L^2([0, 1])}$ for brevity.
By definition, these hypothesis functions satisfy Condition \ref{Cond:a}, i.e., they are H\"older continuous.

Next, we select a subset of  $2s_{N,M}$-separated functions $\{\phi_{k,M}:=\phi_{\omega^{(k)}}\}_{k=1}^K$ satisfying Condition \ref{Cond:b}, i.e., $ \|\phi_{\omega^{(k)}}-\phi_{\omega^{(k')}}\|_{L^2_\rho} \geq 2s_{N,M}$ for any $k\neq k'\in \{1,\ldots,K\}$. Here $s_{N,M}= C_1 c_{0,N}^{-\beta} M^{-\frac{\beta}{2\beta+1}}$ with $C_1$ being a positive constant to be determined below. Since $\{\Delta_\ell = \supp(\psi_{\ell,M})\}_{\ell=1}^{\bar K}$ are disjoint, we have 
\begin{align*}
    \|\phi_{\omega}-\phi_{\omega'}\|_{L^2_\rho} &= \bigg(\int_0^1 \bigg|\sum_{\ell=1}^{\bar K} (\omega_\ell-\omega'_\ell) \psi_{\ell,M}(r) \bigg|^2 \rho'(r)dr\bigg)^{\frac 12} \\
        &= \bigg(\sum_{\ell=1}^{\bar K} (\omega_\ell-\omega'_\ell)^2 \int_{\Delta_\ell}|\psi_{\ell,M}(r)|^2 \rho'(r)dr\bigg)^{\frac 12}\,.
\end{align*}
Since $\rho'(r)\geq \underline{a_0}$ over each $\Delta_\ell$, we have 
\[ 
\int_{\Delta_\ell}|\psi_{\ell,M}(r)|^2 \rho'(r)dx\geq \underline{a_0}  \int_{\Delta_\ell}|\psi_{\ell,M}(r)|^2 dr = \underline{a_0}L^2 h_M^{2\beta+1}\|\psi\|_{L^2}^2.
\] 
Meanwhile, applying the Vashamov-Gilbert bound (\cite[Lemma 2.9]{tsybakov2008introduction}, see also Lemma \ref{lemma:VG}), one can obtain a subset $\{\omega^{(k)}\}_{k=1}^K$ with $K\geq 2^{\bar K/8}$ such that 
$ \sum_\ell^{\bar{K}} (\omega_\ell^{(k)}- \omega_\ell^{(k')})^2 \geq \frac{\bar{K}}{8} $ for any $k\neq k'\in \{1,\ldots,K\}$. Thus, 
\begin{align*}
     \|\phi_{\omega}-\phi_{\omega'}\|_{L^2_\rho} 
	&\geq \sqrt{\underline{a_0}}L h_M^{\beta+\frac 12}\|\psi\|_{L^2([0,1])} \bigg(\sum_{l=1}^{\bar K} (\omega_\ell-\omega'_\ell)^2 \bigg)^{\frac 12} \\
	&\geq \sqrt{\underline{a_0}}L h_M^{\beta+\frac 12}\sqrt{\bar K/8}= 2 C_N M^{-\frac{\beta}{2\beta+1}} = 2 s_{N,M}
\end{align*}
with 
 $C_N=\frac{1}{16} \sqrt{\underline{a_0}}L \left(\frac{L_0}{8 n_0}\right)^{\beta}  c_{0,N}^{-\beta}\,$
by recalling that $\bar K=\lceil c_{0,N} M^{\frac{1}{2\beta+1}}\rceil$ and $h_{M}=\frac{L_0}{8n_0\bar K}$ in Eq. \eqref{eq:h_M}.

\smallskip

To verify Condition \ref{Cond:c} for each fixed dataset $X^1,\cdots,X^M$, we first compute the Kullback divergence.  Recall that $r_{ij}^m=\|X^m_j-X^m_i\|_{\R^d}$ and $\br_{ij}^m=\frac{X^m_j-X^m_i}{r_{ij}^m}$, then  
$R_\phi[X^m]_i =\frac{1}{N-1} \sum_{j\neq i} \phi(r_{ij}^m) \br_{ij}^m$. 
By Assumption \ref{assump:noise-X} on the noise $\eta$ (i.e., being i.i.d. with a distribution $p_\eta$ satisfying $\int p_{\eta}(u) \log \frac{p_{\eta}(u)}
{p_{\eta}(u + v)}du \leq c_\eta \Vert v\Vert ^2$ for all $\Vert v\Vert\leq v_0$), we obtain 
\begin{align*}
	\KL(\bar{\P}_k,\bar{\P}_0) &= \int \cdots \int \log \prod_{m=1}^M \frac{p_\eta(u^m-R_{\phi_{k,M}}[X^m])}{p_{\eta}(u^m)} \prod_{m=1}^M[p_\eta(u^m-R_{\phi_{k,M}}[X^m])du^m] \\
	&= \sum_{m=1}^M \int \log \frac{p_{\eta}(v)}{p_\eta(v+R_{\phi_{k,M}}[X^m])} p_{\eta}(v)dv \\
    &\leq c_\eta  \sum_{m=1}^M \|R_{\phi_{k,M}}[X^m]\|^2_{\R^{Nd}}\,. 
\end{align*}
Employing Jensen's inequality, we have
\begin{align*}
	\|R_{\phi_{k,M}}[X^m]\|^2_{\R^{Nd}}=  \sum_{i=1}^N \Big\|\frac{1}{N-1} \sum_{j\neq i} \phi_{k,M}(r_{ij}^m) \br_{ij}^m \Big\|_{\R^d}^2 \leq   \sum_{i=1}^N \frac{1}{N-1} \sum_{j\neq i} |\phi_{k,M}(r_{ij}^m)|^2  \,.
\end{align*}
Recalling that $\phi_{k,M}(r_{ij}^m)=\sum_{\ell=1}^{\bar K} \omega^{(k)}_\ell \psi_{\ell,M}(r_{ij}^m)$, where $\text{supp} (\psi_{\ell,M})\subseteq \Delta_\ell$ are disjoint and $| \psi_{\ell,M}(r_{ij}^m) | =L h_M^\beta \psi\bigg(\frac{r_{ij}^m-r_\ell}{h_M} \bigg) \leq L h_M^\beta \|\psi\|_{\infty}\mathbf{1}_{\{r_{ij}^m \in \Delta _\ell\}} $, we have 
\[
|\phi_{k,M}(r_{ij}^m)|^2  	= \Big| \sum_{\ell=1}^{\bar K} \omega^{(k)}_\ell \psi_{\ell,M}(r_{ij}^m) \Big|^2  = \sum_{\ell=1}^{\bar K} \omega^{(k)}_\ell \big|\psi_{\ell,M}(r_{ij}^m)\big|^2
\leq L^2 h_M^{2\beta} \|\psi\|_{\infty}^2 \sum_{\ell=1}^{\bar K}  \mathbf{1}_{\{r_{ij}^m \in \Delta _\ell\}}, 
\]
where we have used the fact that $0\leq  \omega^{(k)}_\ell\leq 1$.

Combining the above three inequalities, we obtain 
\begin{align*}
	 \KL(\bar{\P}_k,\bar{\P}_0)   &\leq \frac { c_\eta L^2 h_M^{2\beta}}{N}   \psi_{\text{max}}^{2} \sum_{i,j=1;i\neq j}^N \sum_{m=1}^M \sum_{\ell=1}^{\bar K}  \mathbf{1}_{\{r_{ij}^m \in \Delta _\ell\}} \\ 
	 &\leq  c_\eta\psi_{\text{max}}^{2}L^2 N  M h_M^{2\beta}, 
\end{align*}
where the second ineqaulty follows from that $\sum_{m=1}^M \sum_{\ell=1}^{\bar K}  \mathbf{1}_{\{r_{ij}^m \in \Delta _\ell\}}\leq M $ since the intervals $\{\Delta_\ell\}$ are disjoint. Hence, recalling that $h_M=L_0/(8n_0\bar K) $ in Eq. \eqref{eq:h_M}, $K\geq 2^{\bar K/8}$, and $\bar K=\lceil c_{0,N} M^{\frac{1}{2\beta+1}}\rceil$, we obtain  
\begin{align*}
	\frac 1{K} \sum_{k=1}^{K} \KL(\bar{\P}_k,\bar{\P}_0)&\leq  c_\eta\psi_{\text{max}}^{2}L^2 N  M h_M^{2\beta}= c_\eta \psi_{\text{max}}^{2}L^2 N  M \bigg(\frac{L_0}{8n_0\bar K} \bigg)^{2\beta}\\ 
	&\leq c_\eta \psi_{\text{max}}^{2}L^2 N  (L_0/8n_0 )^{2\beta} c_{0,N}^{-2\beta -1}\bar K \leq \alpha \log (K)
\end{align*}
with 
\begin{equation}\label{Ineq:alpha_N}
 \alpha = 8 c_\eta \psi_{\text{max}}^{2}L^2 N  (L_0/8n_0 )^{2\beta} c_{0,N}^{-2\beta -1}. 
\end{equation}
To ensure $\alpha <\frac{1}{8}$ for all $N$, we need
\[
c_{0,N} > ( 64 c_\eta\psi_{\text{max}}^{2}L^2 (L_0/8n_0 )^{2\beta})^{\frac{1}{2\beta +1}} N^{\frac{1}{2\beta+1}} .
\] 
Setting $c_{0,N} = C_0 N^{\frac{1}{2\beta+1}}$ with $C_0= 2( 64 c_\eta \psi_{\text{max}}^{2}L^2 (L_0/8n_0 )^{2\beta})^{\frac{1}{2\beta +1}} $, we obtain the desired bound in  Condition \ref{Cond:c}.  
\end{proof}

\subsection{Proof of Theorem \ref{thm:L2rho_lower}}
We present the proof of the main Theorem of the lower bound: there exists a constant $c_{\textup{Lower}}>0$ independent of $M$ such that
\begin{equation}\label{ineq:lbd_main_ap}
 \liminf_{M\to \infty} \inf_{\widehat{\phi}_M\in L_{\rho}^2} \sup_{\phi_*\in \calC_0^{\beta}(L)}    \E_{\phi_*}[  M^{\frac{2\beta}{2\beta+1}} \| \widehat{\phi}_M-\phi_* \|_{L^2_\rho} ^2 ]\geq c_{\textup{Lower}}
\end{equation}
for $\beta>0$.

\begin{proof}[Proof of Theorem \ref{thm:L2rho_lower}]
The proof consists of three steps. We will briefly write $C_N=C_1 c_{0,N}^{-\beta}$ with $c_{0,N} = C_0 N^{\frac{1}{2\beta+1}}$,  $C_0= 2( 64 c_\eta \psi_{\text{max}}^{2}L^2 (L_0/8n_0 )^{2\beta})^{\frac{1}{2\beta +1}} $ and $C_1=\frac{1}{16} \sqrt{\underline{a_0}}L \left(\frac{L_0}{8 n_0}\right)^{\beta}$. Then $s_{N,M}$ in Condition \ref{Cond:b} can be written as \[s_{N,M}= C_N M^{-\frac{\beta}{2\beta+1}}\,.\]

{\bf Step 1}: Reduction to bounds in probability for a finite number of hypothesis functions. Leveraging the hypothesis functions $\{\phi_{k,M}\}_{k=0}^M$ and the Markov inequality, we have
\begin{align*}
    \sup_{\phi_*\in \calC_0^{\beta}(L)}  \E_{\phi_*} [M^{\frac{2\beta}{2\beta+1}}\|\widehat{\phi}_M-\phi_* \|_{L^2_\rho}^2] 
	&\geq \max_{\phi\in\{\phi_{0,M},\cdots,\phi_{K,M}\} }   \E_{\phi} [M^{\frac{2\beta}{2\beta+1}}\|\widehat{\phi}_M-\phi \|_{L^2_\rho}^2] \notag	\\
	&\geq C_N^2 \max_{\phi\in\{\phi_{0,M},\cdots,\phi_{K,M}\} } \P_{\phi} \bigg( \|\widehat{\phi}_M-\phi \|_{L^2_\rho} \geq 
	s_{N,M}	\bigg) 
\end{align*}
Recall that the equalities $\P(A) = \E[\mathbf{1}_A] = \E[\E[ \mathbf{1}_A\mid Z] ]= \E[\P(A|Z)]$ hold for any measurable set $A$. Then, we can reduce Eq. \eqref{ineq:lbd_main_ap} to bounds in probability by 
\begin{align}
    &\sup_{\phi_*\in \calC_0^{\beta}(L)}  \E_{\phi_*} [M^{\frac{2\beta}{2\beta+1}}\|\widehat{\phi}_M-\phi_* \|_{L^2_\rho}^2] \nonumber \\
	&\geq C_N^2 \frac{1}{K+1}\sum_{k=0}^K \E_{X^1,\cdots,X^M} \Big[ \P_{k}\big( \|\widehat{\phi}_M-{\phi_{k,M}} \|_{L^2_\rho} \geq s_{N,M} | X^1,\cdots,X^M \big) \Big] \notag \\
	&= C_N^2 \E_{X^1,\cdots,X^M}\bigg[ \frac{1}{K+1} \sum_{k=0}^K \P_{k}\big( \|\widehat{\phi}_M-{\phi_{k,M}} \|_{L^2_\rho} \geq s_{N,M} | X^1,\cdots,X^M \big)\bigg]\,. \label{eq:step1}
\end{align} 
We remark that $\{X^1,\cdots,X^M\}$ inside the expectation are fixed and can be treated as deterministic values in the conditional probability. 

\smallskip

{\bf Step 2}: Transform to bounds in the average probability of testing error of the $2s_{N,M}$-separated hypotheses. 
Define $\kappa_{\text{test}}:\Omega\to \{0,1,\ldots,M\}$ the minimum distance test 
\[ 
\kappa_{\text{test}} = \argmin_{0\leq k\leq K} \| \widehat \phi_M-\phi_{k,M}\|_{L^2_\rho}.
\] 
Then,  if $\kappa_{\rm{test}} \neq k$, we have $\|\widehat{\phi}_M-\phi_{\kappa_{\text{test}},M}\|_{L^2_\rho} \leq  \|\widehat{\phi}_M-\phi_{k,M}\|_{L^2_\rho}$. 
Together with Condition \ref{Cond:b} in Lemma \ref{lemma:construction} (i.e., the functions in $\Theta$ are $2s_{N,M}$-separated) and  the triangle inequality, we obtain for $\kappa_{\rm{test}} \neq k$
\begin{equation}\label{KeyCond1}
\begin{aligned}
	2s_{N,M} \leq & \|{\phi_{k,M}}-\phi_{\kappa_{\text{test}},M} \|_{L^2_\rho} \\
	\leq & \|\widehat{\phi}_M-\phi_{\kappa_{\text{test}},M}\|_{L^2_\rho} +  \|\widehat{\phi}_M-\phi_{k,M}\|_{L^2_\rho}
\leq 2 \|\widehat{\phi}_M-\phi_{k,M}\|_{L^2_\rho}. 
\end{aligned}
\end{equation} 
 That is,  $\kappa_{\rm{test}} \neq k$ implies 
$ \|\widehat{\phi}_M-{\phi_{k,M}} \|_{L^2_\rho} \geq s_{N,M}$, and hence, $\P_k(\|\widehat{\phi}_M-{\phi_{k,M}} \|_{L^2_\rho} \geq s_{N,M}\mid  X^1,\cdots,X^M ) \geq \P(\kappa_{\rm{test}} \neq k \mid  X^1,\cdots,X^M)$. 
Consequently, we have
\begin{align}\label{eq:step2}
    &\frac{1}{K+1} \sum_{k=0}^K \P_{k}\big( \|\widehat{\phi}_M-{\phi_{k,M}} \|_{L^2_\rho} \geq s_{N,M} | X^1,\cdots,X^M \big) \nonumber \\
    \geq& \inf_{\kappa_{\text{test}}} \frac{1}{K+1} \sum_{k=0}^K \P_{k}\big(\kappa_{\text{test}} \neq k | X^1,\cdots,X^M \big)\nonumber \\
    =&\inf_{\kappa_{\text{test}}} \frac{1}{K+1} \sum_{k=0}^K \bar{\P}_{k}\big(\kappa_{\text{test}} \neq k \big)=:\bar{p}_{e,M},
\end{align}
where $ \bar{\P}_k(\cdot )={\P}_{\phi_{k,M}}(\cdot \mid X^{1},\ldots, X^{M})$. 
We call $\bar{p}_{e,M}$ the average probability of testing error.

\smallskip

{\bf Step 3}: Bound $\bar{p}_{e,M}$ from below.  Conditional on each data $\{X^{m}\}_{m=1}^M$, the Kullback divergence estimate \ref{Cond:c}: $\frac 1{K} \sum_{k=1}^{K} \KL(\bar{\P}_k,\bar{\P}_0)\leq \alpha \log(K)$ holds   with $0<\alpha<1/8$, and hence by Lemma \ref{thm:lb_hypothesis_KL} and the fact that $K=2^{\lceil c_{0,N} M^{\frac{1}{2\beta+1}}\rceil} $ in Eq. \eqref{eq:K} of Lemma \ref{lemma:construction} increases exponentially in $M$, we have  
\[
\bar{p}_{e,M}\geq \frac{\log(K+1)-\log(2)}{\log(K)}-\alpha \geq  \frac{1}{2}
\]
if $M$ is large. 
Note that the above lower bound of $\bar{p}_{e,M}$ is independent of the dataset $\{X^m\}_{m=1}^M$. Together with Eq. \eqref{eq:step1} in {\bf Step 1} and Eq. \eqref{eq:step2} in {\bf Step 2}, we obtain with $c_0=\frac 12 [C_1 C_0^{-\beta}]^2$
\begin{equation*}
	\sup_{\phi_*\in \calC_0^{\beta}(L)}  \E_{\phi_*} [\|\widehat{\phi}_M-\phi_* \|_{L^2_\rho}^2] \geq  \frac{C_N^2}{2}	M^{-\frac{2\beta}{2\beta+1}} =c_0  (NM)^{-\frac{2\beta}{2\beta+1}}
\end{equation*}
for any estimator. Hence, the lower bound \eqref{ineq:lbd_main_ap} follows. 
\end{proof}


\section{Example: the normal operator for the uniform distribution}\label{sec:unif-X}
We show in this section that the coercivity constant of the normal operator $\Lbar$ is exactly $c_{\Lbar}=\frac{N-1}{N^2}$ when $X$ has i.i.d.~components uniformly distributed on $[0,1]$. 

Recall that the normal operator can be written as  $\Lbar=\frac{(N-1)(N-2)}{N^2}  \calL_G + \frac{N-1}{N^2} I$, where the operator $\calL_G: L^2_{\rho}([0,1]) \to L^2_{\rho}([0,1])$ is defined by 
 \begin{equation}\label{eq:LG_ap}
 	\innerp{\calL_G\phi, \psi}_{L^2_\rho} = \E[\phi(r_{12})\psi(r_{13})\innerp{\br_{12},\br_{13}}_{\R}]\,, \forall \phi, \psi \in L^2_{\rho}\,.
 \end{equation} 
Thus, to prove that the coercivity constant is $c_{\Lbar}=\frac{N-1}{N^2}$, it suffices to verify that $\calL_G$ is compact. 

By the exchangeability of $X$, the exploration measure $\rho$ is 
\begin{align*}
	\rho(A) &  = \frac{1}{N(N-1)} \sum\nolimits_{i\neq j} \P{(|X_i-X_j|\in A)} =\P{(|X_1-X_2|\in A)}  
\end{align*}
for any measurable set $A\subseteq [0,1]$. Then, it is easy to see that $\rho$ has a density 
\begin{equation}\label{Eq:dens_Unif}
	\rho'(r)=(2-2r)\b1_{\{0\leq r\leq 1\}}\,.
\end{equation}

\begin{proposition}
Let $X= (X_1,X_2,X_3)$ with $X_i \overset{iid}{\sim} U([0,1])$. Then, the operator $\LGbar$ defined in \eqref{eq:LG_ap} is a compact integral operator with integral kernel
\begin{equation}\label{eq:G-kernel}
\begin{aligned}
G(r,s)&=\frac{\widetilde{G}(r,s)}{\rho'(r)\rho'(s)} \,,  \textup{ with }
	\widetilde{G}(r,s)=[2-(|r-s|+|r+s|)]-[2-2|r+s|]\b1_{\{r+s\leq 1\}}\,.
\end{aligned}
\end{equation} 
Consequently, the infimum of the eigenvalues of $\Lbar=\frac{(N-1)(N-2)}{N^2}  \calL_G + \frac{N-1}{N^2} I$ is $c_{\Lbar}=\frac{N-1}{N^2}$. 
\end{proposition}

\begin{proof}
We first prove that $\LGbar$ is an integral operator with $G$ as an integral kernel. Then, we show that it is a compact operator by showing that $G(r,s)\in L^2_\rho$.

Let us recall the notations  $r_{ij}=|X_i-X_j|$ and $\br_{ij}=\frac{X_i-X_j}{r_{ij}}$. We write $\Phi(X_i-X_j)=\phi(r_{ij})\br_{ij}$ and $\Psi(X_i-X_j)=\psi(r_{ij})\br_{ij}$ and then have
\begin{align}
	\innerp{\calL_G\phi, \psi}_{L^2_\rho} &= \E[\phi(r_{12})\br_{12} \psi(r_{13})\br_{13}] \nonumber \\
	&= \E[\Phi(X_1-X_2)\Psi(X_1-X_3)] \nonumber \\
	&=\int_{[0,1]^3} \Phi(x_1-x_2)\Psi(x_1-x_3) \prod_{i=1}^3 dx_i \,. \label{Eq:LG_Unif}
\end{align}

We introduce a change of variables: 

\begin{minipage}{.4\textwidth}
\begin{align*}[left = \empheqlbrace\,]
	x&=x_1-x_2\,; \\
	y&=x_1-x_3\,; \\
	z&=x_2+x_3\,;
\end{align*}
\end{minipage}
\begin{minipage}{.2\textwidth}
	\begin{align*}
		& \\
		\text{which is equivalent to} &  \\
		&
	\end{align*}
\end{minipage}	
\begin{minipage}{.4\textwidth}
\begin{align*}[left = \empheqlbrace\,]
	x_1&=\frac 12 (x+y+z)\,; \\
	x_2&=\frac 12 (-x+y+z)\,; \\
	x_3&=\frac 12 (x-y+z)\,.
\end{align*}
\end{minipage}
Thus, Eq. \eqref{Eq:LG_Unif} becomes
\begin{align*}
	\int_{[0,1]^3} \Phi(x_1-x_2)\Psi(x_1-x_3) \prod_{i=1}^3 dx_i=\frac 12 \int_{D} \Phi(x)\Psi(y) dx dy dz,
\end{align*}
where the cube $[0,1]^3$ is transformed to a region $D$ under the change of variables:
\begin{align*}
	D=\bigcup_{j=1}^4 D_j=\bigcup_{j=1}^4 \left\{(x,y,z):(x,y)\in B_j \right\}
\end{align*}
and the projected disjoint regions $\{B_j\}_{j=1}^4$ on $(x,y)$-plane are defined as follows
\begin{align*}
	B_1=\{(x,y): x\in[0,1], y\in[0,1]\}\,, &\quad B_2=\{(x,y): x\in[0,1], -1+x\leq y\leq 0\}\,,\\
	B_3=\{(x,y): x\in[-1,0], y\in[-1,0]\}\,, &\quad B_4=\{(x,y): x\in[-1,0], 0\leq y\leq 1+x\}\,. 
\end{align*}

The computations for $D_1$ and $D_3$ are similar, so are  $D_2$ and $D_4$. 

To compute $D_1$, we consider the decomposition 
\begin{align*}
	B_1=B_{11}\cup B_{12}:=\{(x,y)\in[0,1]^2: x>y\}\bigcup \{(x,y)\in[0,1]^2: x\leq y\}\,.
\end{align*}
Thus, let $D_1 = D_{11} \cup D_{12}$, where the projection of $D_{11}$ to $(x,y)$-plane corresponds to $B_{11}$ and the projection of $D_{12}$ to $(x,y)$-plane corresponds to $B_{12}$. Thus, we have 
\begin{align*}
	\int_{D_1} \Phi(x)\Psi(y) dx dy dz&=\int_{B_{11}}\int_{x-y}^{2-(x+y)}dz\cdot \Phi(x)\Psi(y) dx dy \\
	&+ \int_{B_{12}}\int_{y-x}^{2-(x+y)}dz\cdot \Phi(x)\Psi(y) dx dy \\
	&= \int_{[0,1]^2} \Phi(x)\Psi(y)\cdot 2[(1-x)\b1_{\{x>y\}}+(1-y)\b1_{\{x\leq y\}}] dxdy \,.
\end{align*}
Note that $\Phi(x)=\phi(x)\frac{x}{|x|}=\phi(x)$ and $\Psi(y)=\psi(y)\frac{y}{|y|}=\psi(y)$ on $\{(x,y)\in [0,1]\times[0,1]\}$. So, with the change of variables $r=x$ and $s=y$ we have
\begin{align*}
	\int_{D_1} \Phi(x)\Psi(y) dx dy dz &= \int_{[0,1]^2} \phi(r)\psi(s)\cdot 2[1-r\b1_{\{r>s\}}-s\b1_{\{r\leq s\}}] drds \\
	&=\int_{[0,1]^2} \phi(r)\psi(s)\frac{[2-(|r-s|+|r+s|)]}{\rho'(r)\rho'(s)}\rho(dr)\rho(ds),
\end{align*}
where $\rho'(r)=(2-2r)\b1_{\{0\leq r\leq 1\}}$. 

On $D_3$, we get the same formula similarly.

On the other hand, $D_2$ is a region with the projected domain on $(x,y)$-plane corresponds to $B_{2}$. Then, we get
\begin{align*}
	\int_{D_2} \Phi(x)\Psi(y) dx dy dz&=\int_{B_{2}}\int_{x-y}^{2+(y-x)}dz\cdot \Phi(x)\Psi(y) dx dy \\
	&=\int_0^1 \int_{-1}^0 \Phi(x)\Psi(y)\cdot 2(1+y-x)\b1_{\{-1\leq y-x\}}dydx \\
	&=-\int_{[0,1]^2} \phi(r)\psi(s) \frac{[2-2|r+s|]}{\rho'(r)\rho'(s)}\b1_{\{r+s\leq 1\}}\rho(dr)\rho(ds)\,.
\end{align*}
On $D_4$, we obtain the same formula in a similar manner. 

In conclusion, we get
\begin{align*}
	\E[\Phi(X_1-X_2)&\Psi(X_1-X_3)] \\
	&=\int_{[0,1]^3} \Phi(x_1-x_2)\Psi(x_1-x_3) \prod_{i=1}^3 dx_i=\frac 12 \int_{D} \Phi(x)\Psi(y) dx dy dz \\
	&=\int_{[0,1]^2} \phi(r)\psi(s)\frac{[2-(|r-s|+|r+s|)]}{\rho'(r)\rho'(s)}\rho(dr)\rho(ds) \\
	&\qquad -\int_{[0,1]^2} \phi(r)\psi(s) \frac{[2-2|r+s|]}{\rho'(r)\rho'(s)}\b1_{\{r+s\leq 1\}}\rho(dr)\rho(ds) \\
	&=\int_{[0,1]^2} \phi(r)\psi(s) G(r,s)\rho(dr)\rho(ds)
\end{align*}
with $G$ defined in Eq. \eqref{eq:G-kernel}.

At last, to show that $\LGbar$ is a compact operator, it suffices to show that $G(r,s)\in L^2_\rho$, which implies that $\LGbar$ is a Hilbert-Schmidt integral operator and therefore compact. Note that 
\begin{align*}
	\int_{[0,1]\times[0,1]}|G(r,s)|^2 \rho(dr)\rho(ds)=&\int_{[0,1]\times[0,1]} \frac{|\widetilde{G}(r,s)|^2}{4(1-r)(1-s)} drds \\
	=&2\int_{0\leq r< s\leq 1} \frac{|[1-s]-[1-|r+s|]\b1_{\{r+s\leq 1\}}|^2}{(1-r)(1-s)}drds \\
	\leq & 2\int_{0\leq r< s\leq 1}\frac{(1-s)}{(1-r)}drds=2\int_{0\leq s< r\leq 1} \frac{s}{r}drds \\
	=& \int_{0\leq r \leq 1}rdr=\frac{1}{2}\,.
\end{align*}

Note that the eigenvalues of $\Lbar$ are simply the eigenvalues of the compact operator $\LGbar$ added by $\frac{N-1}{N^2}$. Since the infimum of the eigenvalues of a compact operator is zero,  we obtain that the infimum of the eigenvalues of the normal operator $\Lbar$ is 
$c_{\Lbar}=\frac{N-1}{N^2}$.
\end{proof}

\smallskip
\section*{Acknowledgments}
The work of XW and FL is partially supported by AFOSR FA9550-20-1-0288. FL is partially funded by the Johns Hopkins Catalyst Award, NSF DMS-2238486, and AFOSR FA9550-21-1-0317. The work of IS is partially supported by the Israel Science Foundation grant 1793/20.  We would like to thank Professor Yaozhong Hu at the University of Alberta, Professor Junxi Zhang at Concordia University, and Professor Maruo Maggioni and Sichong Zhang at Johns Hopkins University for helpful discussions.

\bibliographystyle{plain}
\bibliography{ref_IPS_learning2310,ref_IPS_stochastic,ref_FeiLU2023_10,ref_lb_minimax,ref_regularization23_09}

\end{document}